\documentclass[reqno]{amsart}

\usepackage{mathrsfs}
\usepackage{amsmath}
\usepackage{amssymb}
\usepackage{cite}
\usepackage{latexsym}
\usepackage{graphicx}

\usepackage{color}
\usepackage[normalem]{ulem}
\usepackage{hyperref}
\usepackage{comment}

\theoremstyle{plain}
\newtheorem{thm}{Theorem}[section]
\newtheorem{lem}[thm]{Lemma}
\newtheorem{cor}[thm]{Corollary}
\newtheorem{prop}[thm]{Proposition}

\theoremstyle{definition}
\newtheorem{dfn}[thm]{Definition}
\newtheorem{rmk}[thm]{Remark}
\newtheorem{exm}[thm]{Example}

\newcommand{\red}[1]{\textcolor{red}{#1}}

\def\D{\mathrm{D}}

\def\R{\mathrm{R}}

\def\d{\mathrm{d}}

\def\Nset{\mathbb{N}}

\def\Qset{\mathbb{Q}}
\def\Rset{\mathbb{R}}
\def\Sset{\mathbb{S}}
\def\Tset{\mathbb{T}}
\def\Zset{\mathbb{Z}}

\def\M{\mathscr{M}}

\def\I{\mathscr{I}}

\def\h{\mathrm{h}}

\DeclareMathOperator{\sech}{sech}

\DeclareMathOperator{\sn}{sn}
\DeclareMathOperator{\cn}{cn}

\DeclareMathOperator{\rank}{rank}

\DeclareRobustCommand{\erase}{\bgroup\markoverwith{\textcolor{red}{\rule[.5ex]{2pt}{0.4pt}}}\ULon}

\def\epsilon{\varepsilon}

\makeatletter
\@addtoreset{equation}{section}
\@namedef{subjclassname@2020}{\textup{2020} Mathematics Subject Classification}
\makeatother

\def\theequation{\arabic{section}.\arabic{equation}}

\begin{document}


\title[Partial Breakdown of Integrability  in Nearly Integrable Systems]%
{Partial Breakdown of Integrability in Nearly Integrable Systems}
\thanks{}

\author[M.Gidea]{Marian Gidea}
\address{Department of Mathematical Sciences, Yeshiva University, New York, 10016, NY, USA}
\email{marian.gidea@yu.edu}

\author[S.Motonaga]{Shoya Motonaga}
\address{Department of Artificial Intelligence, 680-4 Kawazu, Iizuka, Fukuoka 820-8502, Japan}
\email{motonaga.shoya640@mail.kyutech.jp}

\date{\today}
\subjclass[2020]{%
37J30;
34A05;
34E10;
37C27; 34C25; 
34C40;
37J40;
37J46}
\keywords
{Melnikov method; integrability; first integral; commutative vector field; nearly integrable systems}
 
\begin{abstract}
We study the persistence of integrable structures in real-analytic nearly integrable systems under perturbations that may destroy some, but not all, of these structures.
Specifically, we establish upper bounds on the numbers of real-analytic first integrals -- quantities conserved by the motion -- and commuting vector fields --  infinitesimal generators of continuous symmetries -- that can persist under the perturbation. 
These bounds are expressed in terms of Melnikov-type functions, which quantify the breakdown of the corresponding integrable structures.
Unlike classical Melnikov theory, our conditions occur even in the absence of zeros of the Melnikov functions, so  they do not necessarily imply the existence of homoclinic tangles.
We give a geometric interpretation of these conditions in the context of the Arnold diffusion problem, showing that they imply micro-diffusion.
We  also obtain a criterion for higher-dimensional nearly integrable systems in which the dimension of the span of the Melnikov vectors restricts the numbers of possible first integrals and commuting vector fields. This shows that the geometry of the Melnikov vectors provides a new mechanism for detecting the breakdown of integrability. We give several applications to integrable Hamiltonian systems subject to forcing and dissipation.
\end{abstract}
\maketitle


\section{Introduction}

The study of nearly integrable systems is a fundamental problem in dynamical systems, dating back to Poincar\'e's pioneering work on the restricted three-body problem \cite{P90,P92}. At its core, it is concerned with understanding how integrability persists or breaks down under perturbations and how the loss of integrability gives rise to the coexistence of regular and chaotic dynamics.

One of the most influential approaches for detecting the onset of chaotic dynamics in nearly integrable systems is \emph{Melnikov theory}, also  referred to as \emph{Poincar\'e--Melnikov theory} \cite{M63,W90,GH83,delshams2000splitting,GL18}. The basic idea is to estimate the first-order  term in the perturbative expansion of the distance between stable and unstable manifolds, which is represented by the Melnikov function (or, in higher-dimensional settings, a Melnikov map).
If this function has a simple zero, then transverse intersections of stable and unstable manifolds occur, resulting in chaotic dynamics. A related perturbative framework is the subharmonic Melnikov method, which is used to detect the persistence and bifurcation of periodic orbits under perturbation. 
 
Poincar\'e's work on non-integrability subsequently evolved into a broader program for determining the Liouville integrability of Hamiltonian systems. This led to modern approaches based on variational equations, which provide powerful obstructions to integrability. Ziglin developed a criterion based on the monodromy group of variational equations along particular solutions \cite{Ziglin82, Ziglin83}.
Later, Morales and Ramis generalized and strengthened Ziglin's approach by replacing the monodromy group with the differential Galois group of the variational equations \cite{MR01a, MR01b, MRS07}. These methods have become standard tools in contemporary integrability analysis.

Another line of development, stemming from Poincar\'e's perturbative approach, relies on Fourier expansions in action-angle coordinates, and was later systematized and extended by Kozlov \cite{K96}. Recently, a connection between Kozlov's integrability criteria and Melnikov functions was established by the second author and his collaborator for one-and-a-half-degree-of-freedom systems \cite{MY23}.

Although these approaches differ substantially in methodology and objectives, they share a common feature: they are primarily qualitative. They determine whether a given structure persists or breaks down, whether splitting occurs, or whether complete integrability is obstructed. By contrast, quantitative questions remain far less understood. If a perturbation destroys integrability only partially, how much of the original integrable structure survives?
More precisely, given a nearly integrable system, how many first integrals and commuting vector fields can persist analytically with respect to the perturbation parameter?

Addressing this question requires going beyond the classical problem of proving non-integrability. Rather than establishing the complete breakdown of integrability, we seek to quantify the extent to which integrable structures persist under perturbations.
This distinction is essential. Traditional obstruction theories typically provide a binary answer: either integrability persists or it is destroyed. However, to understand intermediate dynamical regimes -- where some conservation laws and symmetries survive while others break down --  we need to develop  methods capable not only of proving non-integrability but also of estimating the number of persistent integrals  and  commuting vector fields.
This viewpoint becomes essential in higher-dimensional settings, where partial persistence phenomena are unavoidable and integrability should be regarded as a graded rather than binary property.

In this paper, we establish  upper bounds on the numbers of first integrals and commuting vector fields in nearly integrable systems by means of Melnikov-type integrals.
In our setting, these are given by definite integrals of the first-order perturbation along periodic or quasiperiodic orbits of the unperturbed system, and they depend on both the initial condition and the integration time.

The class of systems considered here consists of perturbations of integrable systems in the sense of Bogoyavlenskij \cite{B98}. This is an extension of  Liouville integrability from Hamiltonian systems to autonomous vector fields of general type by requiring the existence of sufficiently many first integrals and commuting vector fields.
Unlike Liouville integrability, the commuting vector fields are not necessarily generated by the first integrals.  See Examples \ref{exm:exm_0} and \ref{exm:exm_1}.
Remarkably, a version of the Liouville-Arnold theorem holds for Bogoyavlenskij systems, asserting that the phase space is locally foliated by invariant tori on which the dynamics is conjugate to a rigid rotation.

Our approach provides a quantitative method for estimating the number of surviving conservation laws and symmetries rather than merely determining whether integrability persists. Furthermore, the theory applies not only to Hamiltonian systems but also to dissipative systems.

The key new ingredient for obtaining quantitative estimates is the notion of \emph{Melnikov 
functions/vectors with unbounded directions}, which provides the geometric mechanism for bounding the number of persistent first integrals and commuting vector fields.

We give a geometric interpretation of the conditions on Melnikov functions/vectors with unbounded directions in the spirit of the Arnold diffusion problem, showing that these conditions imply micro-diffusion.

To illustrate our approach, we consider rotator-pendulum systems perturbed by quasi-periodic vector fields that are not necessarily Hamiltonian.
We prove that the dimension of the span of  the homoclinic Melnikov vectors imposes restrictions on the number of integrable structures that can persist. Surprisingly, these restrictions remain effective even when the Melnikov vectors/functions have no simple zeros.
Therefore, the obstruction mechanism identified here does not arise from transverse intersections of stable and unstable manifolds themselves, but rather from the geometric properties of the family of Melnikov vectors/functions, in particular from the number of unbounded directions.

Our results reveal a new role for Melnikov vectors/functions in higher-dimensional, nearly integrable systems: beyond detecting chaotic dynamics, they provide quantitative information on the remaining degree of integrability.

The structure of the paper is as follows. 

In Section \ref{sec:setting} we describe the type of systems that we consider, and we recall the concept of Bogoyavlenskij integrability and the corresponding version of the Arnold-Liouville theorem. 

In Section \ref{sec:main} we describe the assumptions on our systems and state the main results of the paper.  In particular, we introduce a new concept of unbounded directions for  multi-dimensional maps or for parametrized families of such maps, which plays a crucial role in our main results. We also recall the notion of a key set (also known as a uniqueness set) for a family of analytic functions. 

In Section \ref{sec:microdiffusion} we provide a geometric interpretation of the conditions that appear in our main results. 

In Section \ref{sec:pf} we give the proofs of the main results. We also give an alternative definition of the concept of $s$ unbounded directions for (parametrized families of)  multi-dimensional maps, as well as a  sufficient condition for having $s$ unbounded directions. 

In Section \ref{sec:connections_melnikov} we establish connections between our main results and the Melnikov method. First, in the case of one-degree-of-freedom Hamiltonian systems, we recall some results that provide non-integrability conditions in terms of subharmonic and homoclinic Melnikov integrals. Second, we extend these results to the higher dimensional case. 
For pendulum-rotator systems subject to general perturbations, we show that the number of unbounded directions of the homoclinic Melnikov vectors determines upper bounds on the number of analytic first integrals and commuting vector fields.
We emphasize that our conditions on the Melnikov vectors are fundamentally different from the classical conditions, which require the existence of non-degenerate zeros.
In particular, our conditions do not imply transversality of the stable and unstable manifolds and the emergence of homoclinic tangles.
The main ingredient of the proof is an approximation of homoclinic Melnikov vectors by subharmonic Melnikov vectors. 

In Section \ref{sec:application_partially_conformal}, we show that, under some general conditions, nearly integrable, partially conformally symplectic systems with $m$ degrees of freedom can possess at most $(m-1)$ analytic first integrals. The notion of a partially conformally symplectic system is recalled in Appendix 
\ref{sec:partially_conformal}.  This is a generalization of conformally symplectic systems (systems with dissipation proportional to velocity), in the sense that it allows for different dissipation coefficients acting on different components. 

In Section \ref{sec:systems_penduli}, we apply the main theorems to a coupled system of two pendulums with dissipation and periodic forcing, and to a system of $N$ pendulums with nearest-neighbor interactions, obtaining  bounds on the number of first integrals and commutative vector fields in both cases. 

\section{Setting}\label{sec:setting}

In this section, we recall the definition of Bogoyavlenskij integrability, the corresponding version of the Arnold-Liouville theorem, and several illustrative examples. 

Let $\M$ be an $n$-dimensional real-analytic manifold
 for $n\ge 2$.
Consider dynamical systems of the form
\begin{align}\label{sys}
 \dot{x}=X_\varepsilon(x), \quad x\in \mathscr{B},
\end{align}
where $\epsilon\in (-\epsilon_0,\epsilon_0)$ is a small parameter with $0<\epsilon_0\ll1$, $\mathscr{B}$ is some domain in $\M$,
 and the vector field $X_\varepsilon$ on $\mathscr{B}$ is analytic with respect to $x$ and $\epsilon$.

Let \begin{equation}
\label{eqn:X_expansion}
X_\varepsilon(x)=X^0(x)+\varepsilon X^1(x)+\epsilon^2 X^2(x,\epsilon)
\end{equation}
 for $\epsilon\ge 0$ sufficiently small.
The system \eqref{sys} becomes
\begin{align}\label{sys0}
\dot{x}=X^0(x)
\end{align}
when $\varepsilon=0$, and \eqref{sys} is regarded as a perturbation of \eqref{sys0}.

In this paper we are interested in the integrability in Bogoyavlenskij sense (defined below) of nearly integrable systems of the form \eqref{sys} for  all $\epsilon>0$ sufficiently small.

\begin{dfn}[Bogoyavlenskij]\label{dfn:integrability}
The $n$-dimensional system \eqref{sys0} is called \emph{$(q,n-q)$-integrable} for some positive integer $q\le n$ or simply \emph{integrable} 
 if there exist $q$ vector fields $Y_1(x)(:= X_0(x)),Y_2(x),\dots,Y_{q}(x)$
 and $n-q$ scalar-valued functions $F_1(x),\dots,F_{n-q}(x)$
 such that the following two conditions hold:
\begin{enumerate}
\setlength{\leftskip}{-1.8em}
\item[\rm(i)]
$Y_1(x),\dots,Y_q(x)$ are linearly independent almost everywhere and commute with each other,
 i.e., $[Y_j,Y_k](x)\equiv 0$ for $j,k=1,\ldots,q$, where $[\cdot,\cdot]$ denotes the Lie bracket;
\item[\rm(ii)]
$F_1(x),\dots, F_{n-q}(x)$ are functionally independent, i.e.,
 $dF_1(x), \dots, dF_{n-q}(x)$ are linearly independent almost everywhere,
 and $F_1(x),\dots,F_{n-q}(x)$ are first integrals of $Y_1, \dots,Y_q$,
 i.e., $dF_k(Y_j)=0$ for $j=1,\ldots,q$ and $k=1,\ldots,n-q$.
\end{enumerate}
If $Y_1,Y_2,\dots,Y_q$ and $F_1,\dots,F_{n-q}$ are analytic, 
 then Eq.~\eqref{sys0} is said to be \emph{analytically integrable}.
\end{dfn}
Since an $m$-degree-of-freedom Liouville-integrable Hamiltonian system with $m\in\Nset$
 has not only $m$ functionally independent first integrals
 but also $m$ linearly independent commutative (Hamiltonian) vector fields
 generated by the first integrals, Definition~\ref{dfn:integrability} is considered as a generalization of 
 Liouville-integrability for Hamiltonian systems \cite{A89}.
 Moreover, it is known that Bogoyavlenskij-integrable systems have the action-angle coordinates in which the system is transformed into a simpler system. Here we state its precise assumptions and statements. Assume the following two conditions on the unperturbed system \eqref{sys0}:
\begin{enumerate}
\setlength{\leftskip}{-1em}
\item[\bf(A1)]
 The system \eqref{sys0} is analytically $(q,n-q)$ Bogoyavlenskij-integrable for some positive integer $q< n$ with pair-wise commutative analytic vector fields $Y_1(x)(:=X^0(x)),\ldots,Y_q(x)$ and their common  $n-q$ analytic first integrals $F_1(x),\ldots,F_{n-q}(x)$, i.e., conditions~(i) and (ii) of Definition~\ref{dfn:integrability} hold for $Y_1(x), \ldots,Y_q(x)$ and $F_1(x),\ldots,F_{n-q}(x)$.
\item[\bf(A2)]
Let $F(x)=(F_1(x),\ldots,F_{n-q}(x))$. 
There exists a regular value $c_0\in\Rset^{n-q}$ of $F$,
 i.e., $\rank dF(x)=n-q$ when $F(x)=c_0$,
 such that $F^{-1}(c_0)$ has a connected and compact component
 and $Y_1(x), \ldots, Y_q(x)$ are linearly independent on $F^{-1}(c_0)$.
\end{enumerate}
Note that the level set $F^{-1}(c)$ is \emph{regular}
 if $c\in\Rset^{n-q}$ is a regular value of $F$.
Henceforth we assume without loss of generality that $F^{-1}(c)$ is connected and compact itself
 in (A2), by reducing the domain of $F$ if necessary.
Now we state an extension of the Liouville-Arnold
 theorem for Hamiltonian systems \cite{A89} 
 (see \cite{B98, Z06,Z18} for the details).
 \begin{prop}[Liouville-Mineur-Arnold-Jost]\label{LAJ}
Suppose that assumptions~(A1) and (A2) hold.
Then we have the following:
\begin{enumerate}
\setlength{\leftskip}{-2em}
\item[\rm(i)]
The level set $F^{-1}(c_0)$ is analytically diffeomorphic to the $q$-dimensional torus
$\Tset^q=\prod_{j=1}^q\Sset^1$ with $\Sset^1=\Rset/2\pi\Zset$;
\item[(ii)]
There exists an analytic diffeomorphism $\varphi:U\times\Tset^q 
 \to\mathcal{U}$, where $U$ and $\mathcal{U}$ are, respectively,
 neighborhoods of $I=I_0$ in $\Rset^{n-q}$ and of $F^{-1}(c_0)$ in $\M$ for some $I_0\in\Rset^{n-q}$,
 such that 
\begin{enumerate}
\setlength{\leftskip}{-1.8em}
\item[(iia)]
$\varphi(\{I_0\}\times\Tset^{q})=F^{-1}(c_0)$;
\item[(iib)]
$F\circ\varphi(I, \theta)$ depends only on $I$,
 where $(I, \theta)\in U\times\Tset^q$;
\item[(iic)]
The vector field $(\varphi^{-1})_*X^0$ on $U\times \Tset^q$
 is of the form
\begin{equation}
\dot{I}=0,\quad\dot{\theta}=\omega(I)
\label{eqn:aasys0}
\end{equation}
 on $U\times \Tset^q$,
where $\omega:U\to\Rset^q$ is analytic.
\end{enumerate}
\end{enumerate}
\end{prop}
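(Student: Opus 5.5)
The proof follows the classical Liouville--Arnold argument, with the commuting fields $Y_1,\dots,Y_q$ taking the place of the Hamiltonian fields generated by the integrals.

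\emph{Step 1: the flow action.} Each $Y_j$ is tangent to the level sets of $F$, since $dF_k(Y_j)=0$. On the compact component $F^{-1}(c_0)$ the flows $\phi^t_{Y_j}$ are therefore complete. Because $[Y_j,Y_k]\equiv 0$, these flows commute. Hence they define an analytic action of $\Rset^q$,
\[
\Phi(t,x)=\phi^{t_1}_{Y_1}\circ\cdots\circ\phi^{t_q}_{Y_q}(x),
\]
on every regular level set $F^{-1}(c)$ with $c$ near $c_0$. Compactness and regularity of $F^{-1}(c_0)$ imply, via the implicit function theorem and a tubular-neighborhood argument, that the nearby level sets are also compact and connected. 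By (A2), linear independence of the $Y_j$ holds on a neighborhood of $F^{-1}(c_0)$.

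\emph{Step 2: orbits and stabilizers.} Since $\dim F^{-1}(c)=q$ and the $Y_j$ are independent there, every orbit of the action is open in $F^{-1}(c)$. Connectedness then makes the action transitive. The stabilizer $\Lambda_c$ of a point is a discrete subgroup of $\Rset^q$, and compactness forces it to be a lattice of rank $q$. Thus $F^{-1}(c)\cong\Rset^q/\Lambda_c\cong\Tset^q$, which proves (i).

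\emph{Step 3: analytic coordinates.} Choose an analytic local section $\sigma(c)$, transversal to the level sets, through a point of $F^{-1}(c_0)$. The lattice $\Lambda_c$ is determined by solving $\Phi(t,\sigma(c))=\sigma(c)$ near the basis vectors of $\Lambda_{c_0}$. The implicit function theorem applies because $\partial_t\Phi$ has rank $q$, and it yields an analytic basis $T_1(c),\dots,T_q(c)$ of $\Lambda_c$. Writing $T(c)$ for the matrix with columns $T_i(c)$, define
\[
\varphi(I,\theta)=\Phi\bigl(T(c(I))\theta/2\pi,\ \sigma(c(I))\bigr),
\]
taking $I=c-c_0+I_0$ (any analytic reparametrization of $c$ would serve). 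This map is an analytic diffeomorphism from $U\times\Tset^q$ onto $\mathcal U$: it is well defined modulo $2\pi\Zset^q$, it is a bijection on each fiber, and its differential is invertible. Properties (iia) and (iib) hold by construction.

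\emph{Step 4: the form of the vector field.} In these coordinates each $Y_j$ is a constant field on each torus, with coefficients depending only on $I$. Indeed $\partial_{t_j}\Phi=Y_j$, so $\varphi^{-1}_*Y_j=2\pi\,T(c)^{-1}e_j$. Taking $j=1$ gives (iic) with
\[
\omega(I)=2\pi\,T(c(I))^{-1}e_1,
\]
which is analytic.

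The main obstacle I expect is Step 3: obtaining an analytic choice of the lattice basis and showing that $\varphi$ is a global diffeomorphism rather than merely a local one. Injectivity comes from exactness of the stabilizer, and the local diffeomorphism property comes from the independence of the $Y_j$ together with the transversality of $\sigma$.
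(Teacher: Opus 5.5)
Your argument is correct, and it is essentially the proof the paper relies on: the paper gives no proof of its own but cites Bogoyavlenskij and Zung, whose argument is the classical Liouville--Arnold one you give (the $\Rset^q$-action of the commuting flows, the period lattice $\Lambda_c$, and analytic dependence of its basis via the implicit function theorem). Two details you leave implicit are routine to fill in: the implicit function theorem must be applied in the $q$ fibre directions, e.g.\ in the chart $(c,s)\mapsto\Phi(s,\sigma(c))$, since $\Phi(t,\sigma(c))=\sigma(c)$ is overdetermined as written; and the continued periods $T_i(c)$ generate all of $\Lambda_c$, not just a finite-index sublattice, by local uniqueness of those solutions near each point of $\Lambda_{c_0}$ plus a compactness argument.
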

The coordinates $(I,\theta)$ are called the \emph{action-angle coordinates},
 and $\omega(I)$ is referred to as the \emph{angular frequency vector}.
 Throughout the paper,  we identify $I\in U$, via the coordinate change $\varphi$, with the corresponding $c\in\mathcal{U}$.
 
 We remark that the solution of Eq.~\eqref{eqn:aasys0} is
 $(I(t),\theta(t))=(c,\omega(c)t+\tau)$ with arbitrary constant $(c,\tau)\in(U,\Tset^{q})$, which provides an $n$ parameter family of quasi-periodic orbits (including periodic ones) $\{\gamma_\tau^{c}\}_{(c,\tau)\in U\times\Tset^q}$ of Eq.~\eqref{sys0} given by \[\gamma_\tau^{c}(t)=\varphi(c,\omega(c)t+\tau).\]

\begin{rmk}\label{rmk:Y} In particular, each orbit $\gamma_\tau^c$ is an integral curve of the vector field 
$\sum_{i=1}^{q} \omega_i(c)Y_i $. By replacing $Y_i$ with another basis of the same abelian Lie algebra, we can assume that $Y_i=\frac{\partial}{\partial\theta_i}$, $i=1,\ldots,q$.
\end{rmk}

\begin{rmk}\label{rmk:complete}
Note that we do not assume that the manifold $\mathscr{M}$ is compact.
While the unperturbed vector field $X^0$ is complete 
on the regular level sets of $F$, the perturbed vector field $X_\varepsilon$
does not need to be complete.
\end{rmk}

\begin{rmk}\label{rmk:lift}
    Let $T^*\mathscr{M}$ be the cotangent bundle of $\mathscr{M}$ and  $\pi:T^*\mathscr{M}\to\mathscr{M}$ be its natural projection.
	 Let $\Omega_0=d\lambda$ be the canonical  symplectic form on  $T^*\mathscr{M}$, where $\lambda:T^*\mathscr{M}\to T^*(T^*\mathscr{M})$ is the Poincar\'{e}-Liouville one-form  given by
	\[
		\lambda_z=p(d\pi_{(x,p)}(\cdot)),\quad (x,p)\in T^*\mathscr{M}.
	\]
	For each vector field $X$ on $\mathscr{M}$, the \emph{cotangent lift} of $X$ is the Hamiltonian vector field $\hat{X}$ with the Hamiltonian
	\[
		h_X(x,p)=p(X(x)),\quad (x,p)\in T^*\mathscr{M}
	\]
	on the symplectic manifold $(T^*\mathscr{M},\Omega_0)$.
    Then it is easy to see that for two vector fields $X,Z$ on $\mathscr{M}$ it holds that $h_{[X,Z]}=\{h_X,h_Z\}$
    (see Lemma~3.6 in \cite{MY21}).
    In particular, if $X$ commutes with $Z$, then $h_Z$ is a first integral of the Hamiltonian $h_X$ on $T^*\mathscr{M}$.
    In other words, commutative vector fields can be regarded as first integrals of the lifted vector field.
    As it turns out, if a system is integerable in the Bogoyavlenskij sense, then its cotangent lift is integrable in the Liouville sense \cite{AZ10}. 
    However, the converse is not necessarily true. 
\end{rmk}

\begin{exm}\label{exm:exm_0}
A simple example of a system that is Bogoyavlenskij-integrable but not Liouville is the following.
Let $H_0:\mathscr{M}\to\mathbb{R}$ be Hamiltonian on a $(2m)$-dimensional symplectic manifold $(\mathscr{M}, \Omega)$,  where $\Omega$  denotes the symplectic form.  Assume that $H_0$ is Liouville-integrable, with first integrals $(F_1,\ldots,F_m)$.
Consider a quasi-periodic perturbations of $H_0$:
\[H_\epsilon(x)=H_0(x)+\epsilon H_1(x, \phi)\]
where $\phi\in \mathbb{T}^d$ is given by
\[\dot\phi=\nu\]
where $\nu\in  \mathbb{R}^d$ is a rationally independent vector.
We consider the extended system
\begin{equation}\label{eqn:exm0} \begin{split}
\dot x=&X_{H_0}(x)+\epsilon X_{H_1}(x,\phi),\\
\dot \phi=&\nu,
\end{split}
\end{equation}
where $(x,\phi)\in\mathscr{M}\times \mathbb{T}^d$ and $H_1:\mathscr{M}\times \mathbb{T}^d\to\mathbb{R}$. Above, $X_{H_0}$ and $X_{H_1}$ denote the Hamiltonian vector fields associated to $H_0,H_1$ respectively. Note that $d$ can be an odd integer.

When $\epsilon=0$, the system is Bogoyavlenskij $(m+d,m)$-integrable, with vector fields $X_{F_1}, \ldots, X_{F_m}, \nu_1\frac{\partial}{\partial \phi_1},\ldots, \nu_d\frac{\partial}{\partial \phi_d}$,  and first integrals $F_1,\ldots, F_m$. 
\end{exm}

\begin{exm}\label{exm:exm_1}
Another example of a system that is Bogoyavlenskij-integrable but not  necessarily Liouville-integrable a priori is given by the Euler’s hydrodynamics equations for an incompressible fluid
\begin{equation*}
  \frac{\partial V}{\partial t}=V \times \textrm{curl} (V) -\nabla \left (p+\frac{\|V\|^2}{2}\right),\quad \textrm{div} (V)=0,
\end{equation*}
where
$V(x,t)$  is the velocity field of the fluid, $p(x,t)$ is the pressure, and $x\in\Rset^3$.
Assume that the vector field $V$ is axially symmetric, so  $V$ expressed in cylindrical coordinates $(z,r,\phi)$ is of the form
\[V(z,r,t)=V_z(z,r,t)\frac{\partial}{\partial z}+V_r(z,r,t)\frac{\partial}{\partial r}+V_\phi (z,r,t) \frac{\partial}{\partial \phi}.\]
The motion of the fluid in extended coordinates $(z,r,\phi,t)$ is given by the equations
\begin{equation} \label{eqn:exm_2}
\frac{dz}{dt}=V_z, \quad  \frac{dr}{dt}=V_r,\quad \frac{d\phi}{dt}= V_\phi,\quad \frac{dt}{dt}=1.\end{equation}
If  $F=r^2 V_\phi(z,r,t)$ is not constant, then the system \eqref{eqn:exm_2} is Bogoyavlenskij $(3,1)$-integrable, with vector fields
$\textrm{curl}(V)$, $\frac{\partial}{\partial \phi}$, $V+\frac{\partial}{\partial t}$ and first integral $F=r^2 V_\phi$.
See \cite{B98} for details.
\end{exm}
 
\section{Main results}\label{sec:main}

In this section, we present the main results concerning the number of first integrals  and commuting vector fields
-- depending real-analytically on both the variables and the parameter -- that the perturbed system \eqref{sys} can admit for all sufficiently small $|\epsilon|$.
We formulate these results for both general, nearly integrable systems (in the sense of Bogoyavlenskij) and Hamiltonian systems.


\subsection{The general case}
We formulate our main results for nearly integrable systems of the form \eqref{sys} under assumptions (A1) and (A2).
By Proposition~\ref{LAJ}, we can rewrite the system \eqref{sys0} in action-angle coordinates \eqref{eqn:aasys0}.
For our main results, we need an additional assumption on \eqref{eqn:aasys0}:
\begin{enumerate}
\setlength{\leftskip}{-1em}
\item[\bf(A3)]
If $r\in\Zset^q$ and $r\cdot \omega(I)=0$ for any $I\in U$, then $r=0$ holds.
\end{enumerate}
\begin{rmk}\label{rmk:(A3)}
If there exists a $q$-tuple $(I^{(1)},\ldots,I^{(q)})\in U^q$ such that $\omega(I^{(1)}),\ldots, \omega(I^{(q)})$ are $\Rset$-linearly independent, then (A3) holds. In particular, if there exists $I^*\in U$ such that ${\rm rank} \ d_{I}\omega(I^*)=q$, then (A3) holds.
\end{rmk}
Note that (A3) is characterized by a condition without the action-angle coordinates (See \cite{MY23} for the proof):
\begin{prop}
	Condition (A3) is equivalent to the following:
	\begin{enumerate}
\setlength{\leftskip}{-0.5em}
\item[\bf(A3')]
There exists a dense subset $D$ of a neighborhood of the regular value $c_0\in\Rset^{n-q}$ such that for each $c\in D$ the level set $F^{-1}(c)$ contains an orbit of \eqref{sys0} which is dense in $F^{-1}(c)$.
\end{enumerate}
\end{prop}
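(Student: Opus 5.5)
We prove the two implications separately, working in the action-angle coordinates of Proposition~\ref{LAJ}. Via $\varphi$, the level set $F^{-1}(c)$ for $c$ near $c_0$ corresponds to $\{I=c\}\times\Tset^q$ by (iib). On this set the unperturbed flow is the linear flow $\theta(t)=\omega(c)t+\tau$ by (iic). The problem therefore reduces to the classical fact that a linear flow on $\Tset^q$ with frequency $\omega$ has a dense orbit if and only if $\omega$ is nonresonant, i.e.\ $r\cdot\omega=0$ with $r\in\Zset^q$ forces $r=0$. In the nonresonant case every orbit is dense, by Kronecker's theorem. If there is a nonzero resonance $r$, the continuous function $\theta\mapsto \sin(r\cdot\theta)$ is a nonconstant first integral of the flow. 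Hence no orbit is dense, since the function would be constant on the closure.

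\emph{(A3) $\Rightarrow$ (A3').} For each nonzero $r\in\Zset^q$, let $Z_r=\{I\in U: r\cdot\omega(I)=0\}$. Since $\omega$ is analytic and $U$ can be taken connected, $I\mapsto r\cdot\omega(I)$ is an analytic function on $U$. By (A3) it is not identically zero, so its zero set $Z_r$ is closed and nowhere dense in $U$. The set $D=U\setminus\bigcup_{r\neq0}Z_r$ is the complement of a countable union of closed nowhere dense sets, so it is residual and therefore dense by Baire's theorem. (In fact it has full measure, since analytic zero sets have measure zero.) For $c\in D$ the frequency $\omega(c)$ is nonresonant, so every orbit on $F^{-1}(c)$ is dense by Kronecker.

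\emph{(A3') $\Rightarrow$ (A3).} Suppose $r\neq0$ satisfies $r\cdot\omega(I)=0$ for all $I\in U$. Then $\omega(c)$ is resonant for every $c$. By the observation above, no orbit of \eqref{sys0} is dense in any level set $F^{-1}(c)$ with $c$ near $c_0$. This contradicts (A3'), which requires a dense subset $D$ of such $c$, in particular a nonempty one.

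The only delicate point is the identification of the ``neighborhood of $c_0$'' in (A3') with the action domain $U$. This uses that $\varphi$ is a diffeomorphism onto a neighborhood $\mathcal U$ of $F^{-1}(c_0)$, together with the paper's identification of $I$ with $c$ and its reduction to connected compact level sets. We also need $U$ connected, which we arrange by shrinking $U$ to a ball, so that a nontrivial analytic function has nowhere dense zeros. I expect this bookkeeping, rather than the dynamics, to be the main thing to check carefully.
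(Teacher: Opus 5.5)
Your argument is correct and is the standard one; the paper gives no proof of this proposition and only refers to \cite{MY23}. Kronecker's theorem combined with a Baire-category (or measure-zero) argument over the countably many analytic resonance sets $\{r\cdot\omega=0\}$ gives (A3)$\Rightarrow$(A3'), and the nonconstant continuous first integral $\sin(r\cdot\theta)$ on resonant tori gives the converse.

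The one point to phrase more carefully is connectedness: shrinking $U$ to a ball is guaranteed to preserve (A3), by the identity theorem, only when $U$ is already connected. So connectedness should be a standing assumption rather than something you arrange. The paper assumes it implicitly, for example when it says that open subsets of $U$ are key sets for $C^\omega(U)$. For a disconnected $U$, (A3) could hold merely because of a component not containing $I_0$, and then (A3') would fail.
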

The following assumption is needed when we consider existence of commutative vector fields.
\begin{enumerate}
\setlength{\leftskip}{-1em}
\item[\bf(A4)]
For some $I^*\in U$, ${\rm rank} \ d_{I}\omega(I^*)=n-q$ holds.
\end{enumerate}
We remark that (A4) does not hold when $n-q>q$.

For each $c\in U$, we define the Melnikov-type functions (Melnikov functions, for short) $\mathscr{A}_{F_j}^c: \Tset^q\times\Rset\to\Rset\ (j=1,\ldots,n-q)$ given by
\begin{equation}
	\mathscr{A}_{F_j}^c(\tau,T)=\int_0^T dF_j(X^1)_{\gamma_\tau^c(t)}dt
\label{eqn:melnikov_function}
\end{equation}
and the Melnikov-type vectors (Melnikov vectors, for short)
\begin{equation}
\mathscr{A}_{F}^c(\tau,T):=(\mathscr{A}_{F_1}^c(\tau,T),\ldots,\mathscr{A}_{F_{n-q}}^c(\tau,T)).
\label{eqn:melnikov_vector}
\end{equation}

Note that $\mathscr{A}_{F}^c(\tau,T)$ is analytic in $(c,T,\tau)$ since it is an integration with parameters and the integrant is analytic in $(c,T,\tau)$.

Now we introduce a new concept of unbounded directions for multi-dimensional maps, which plays a crucial role in our main results. 
\begin{dfn}\label{dfn:unbdd_A}
	Let $k\in\Nset$ and $s\in\{1,\ldots,k\}$. We say a map $f=(f_1,\ldots,f_k):\Tset^q\times \Rset\to \Rset^k$ \emph{has $s$ unbounded directions}
	if $s$ is the largest positive integer with the following property:
	there exist  $s$-components $f_{i_1},\ldots, f_{i_s}$ of $f$ such that, for any non-trivial $a=(a_1,\ldots,a_s)\in\Rset^s\setminus\{0\}$, the linear combination $\sum_{j=1}^{s}a_j f_{i_j}(\tau,t)$ is unbounded in $t$ for some $\tau=\tau_a\in\Tset^q$, or, equivalently,
	\begin{align}\label{eq:unbdd}
	a_1f_{i_1}+\cdots+a_l f_{i_s} \text{ is bounded for all }\tau\in\Tset^q
	\Longrightarrow a_1=\ldots=a_s=0.
	\end{align}
\end{dfn}

The following alternative description can be useful.
We define an equivalence relation over functions $g:\mathbb{R} \to \mathbb{R}$:
\[ g_1\sim g_2 \Leftrightarrow t\mapsto (g_1-g_2)(t) \textrm{ is a bounded function}.\]
We denote by $\hat{g}$ the equivalence class of $g$. If $g$ is a bounded function then $\hat g=0$.
Then \eqref{eq:unbdd} is equivalent to the usual linear independence condition:
\[ a_1\hat{f}_{i_1}(\tau, \cdot)+\dots+a_s \hat{f}_{i_s}(\tau, \cdot) =0\  \text{for all }\tau\in\Tset^q
 \Longrightarrow a_1=\ldots=a_s=0 .\]

We define the subsets $\mathscr{Q}_{1}\supset\mathscr{Q}_{2}\supset\ldots \supset\mathscr{Q}_{n-q}$ of $U$ as
\begin{align*}
\mathscr{Q}_s:=\{c\in U; \mathscr{A}_{F}^c\ \text{has $s$ unbounded directions}\},
\end{align*}
for $s=1,\ldots,n-q$.
In particular, 
\begin{align*}
\mathscr{Q}_1=\{c\in U; \mathscr{A}_{F}^c(\tau, T)\ \text{is unbounded in $T$ for some} \ \tau\in\Tset^q\}.
\end{align*}
We will give an equivalent condition and a sufficient condition of having unbounded directions (Definition~\ref{dfn:unbdd_A}) in Sections~\ref{sec:pf} and ~\ref{sec:sufficient_condition} respectively.

\begin{dfn}\label{dfn:key}  A subset $\Delta\subset U$ is said to be a \emph{key set} for the family of analytic functions $C^\omega(U)$ on $U$ if any analytic function vanishing on $\Delta$ also vanishes on $U$. 
\end{dfn}

For example, an open subset of $U$ and a dense set in $U$ are key sets for $C^\omega(U)$.

We now state our main results. 

\begin{thm}\label{thm:main1}
	Assume that (A1), (A2) and (A3) hold.
	Let $s\in\{1,\ldots,n-q\}$. If $\mathscr{Q}_s$ is a key set for $C^\omega(U)$,
	then the perturbed system $X_\varepsilon$ has at most $n-q-s$
	real-analytic first integrals in $(x,\varepsilon)$ in a neighborhood of $F^{-1}(c)$
	such that they are functionally independent in $x$ for $|\varepsilon|\neq0$.
\end{thm}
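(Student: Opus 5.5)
Suppose, for contradiction, that $X_\varepsilon$ admits $k>n-q-s$ real-analytic first integrals $G_1(x,\varepsilon),\dots,G_k(x,\varepsilon)$, functionally independent in $x$ for $\varepsilon\neq0$. We work in action-angle coordinates $(I,\theta)$ from Proposition~\ref{LAJ}. The argument has three stages: extract a zero-order piece, reduce it to a function of the actions alone, and then test the first-order relation against the Melnikov vectors.

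\textbf{Step 1: the zero-order piece depends only on $I$.} Expand $G_j(x,\varepsilon)=\sum_{l\ge0}\varepsilon^l G_j^l(x)$. Functional independence for $\varepsilon\ne0$ alone does not give independence of the leading coefficients. So we first apply a Ziglin-type reduction (as in Kozlov/\cite{MY23}): replace the $G_j$ by suitable analytic polynomial combinations, dividing by powers of $\varepsilon$. After this, the leading terms $G_1^0,\dots,G_k^0$ are functionally independent, and each is still the $\varepsilon^0$ coefficient of a first integral.

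At order $\varepsilon^0$, $G_j^0$ is a first integral of $X^0=\omega(I)\cdot\partial_\theta$. Expanding in Fourier series in $\theta$, every coefficient with index $r\neq0$ satisfies $(r\cdot\omega(I))\hat G_r(I)=0$. By (A3), $r\cdot\omega\not\equiv0$, and analyticity then forces $\hat G_r\equiv0$. Hence $G_j^0=g_j(I)$.

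\textbf{Step 2: the first-order identity.} At order $\varepsilon^1$ we have $dG_j^1(X^0)+dG_j^0(X^1)=0$. Integrate this along $\gamma_\tau^c$ on $[0,T]$. This gives
\[
G_j^1(\gamma_\tau^c(T))-G_j^1(\gamma_\tau^c(0))=-\sum_{i=1}^{n-q}\partial_{I_i}g_j(c)\,\mathscr{A}_{F_i}^c(\tau,T),
\]
where we identify $I$ with $F$ via (iib). The left side is bounded in $T$, because $G_j^1$ is continuous on the compact torus $F^{-1}(c)$.

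\textbf{Step 3: independence from the key set.} Fix $c\in\mathscr{Q}_s$, and let $i_1,\dots,i_s$ be the component indices in Definition~\ref{dfn:unbdd_A}. Let $a\in\Rset^k$ be any coefficient vector, and write $v=\sum_j a_j\nabla g_j(c)$. Then $\sum_i v_i\mathscr{A}^c_{F_i}$ is bounded for all $\tau$.

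The main obstacle is concluding from this that the components $v_{i_1},\dots,v_{i_s}$ vanish, since the other components $v_i$ also enter the combination. I would try to use maximality of $s$. Pass to the quotient space of equivalence classes $\hat{\ }$ modulo bounded functions. There, the span of $\{\hat{\mathscr A}^c_{F_i}\}$ has dimension exactly $s$. This means the bounded combinations form a subspace $K_c\subset\Rset^{n-q}$ of dimension $n-q-s$.

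So every combination $\sum_j a_j\nabla g_j(c)$ lies in $K_c$, which gives $\operatorname{rank}(\nabla g_1(c),\dots,\nabla g_k(c))\le n-q-s$. Equivalently, every $(n-q-s+1)$-minor of the Jacobian $\partial g/\partial I$ vanishes at every $c\in\mathscr{Q}_s$.

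These minors are analytic on $U$, and $\mathscr{Q}_s$ is a key set. Therefore they vanish identically on $U$. This makes $g_1,\dots,g_k$ functionally dependent, because $k>n-q-s$, and that contradicts Step 1.

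The delicate points are the Ziglin reduction in Step 1 and the dimension count of $K_c$. One also needs the bound to hold even though $\mathscr{Q}_s$ is defined with ``has exactly $s$'' directions; at least $s$ suffices for the argument.
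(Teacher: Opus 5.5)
Your proposal is correct and follows the paper's proof step for step. The steps are: the Ziglin-type reduction; the fact that $G^0=\psi\circ F$ under (A3), which you prove directly by Fourier series where the paper cites Lemma~3.4 of \cite{MY23}; boundedness of $d\psi\,\mathscr{A}_F^c$ from the first-order identity; and vanishing of the $(n-q-s+1)$-minors on the key set $\mathscr{Q}_s$. The only deviation is in Step~3, where you get the rank bound by rank--nullity in the quotient by functions bounded in $t$ for every $\tau$, instead of through Definition~\ref{dfn:unbdd_B} and Proposition~\ref{prop:equivalence_unbdd}; this is a slightly more economical route to the same conclusion.
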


\begin{rmk}
The key set condition for $\mathscr{Q}_s$ is essential, as the existence of infinitely many elements in $\mathscr{Q}_s$ itself does not prevent integrability of the perturbed systems. See Appendix~\ref{app:example}.
\end{rmk}

We next extend Definition~\ref{dfn:unbdd_A} to a family of maps.
\begin{dfn}\label{dfn:unbdd_A_family}
	Let $k\in\Nset$ and $s\in\{1,\ldots,k\}$. We say a (possibly uncountable) family $\{f_\lambda\}_{\lambda\in\Lambda}$ of maps $f_\lambda=(f_{1,\lambda},\ldots,f_{k,\lambda}):\Tset^q\times \Rset\to \Rset^k$ \emph{has $s$ unbounded directions}
	if $s$ is the largest positive integer with the following property:
	there exists $s$-components $f_{i_1,\lambda},\ldots, f_{i_s,\lambda}$ of $f_{\lambda}$ such that for any non-trivial $a=(a_1,\ldots,a_s)\in\Rset^s\setminus\{0\}$ the linear combination $\sum_{j=1}^{s}a_j f_{i_j,\lambda}(\tau,t)$ is unbounded in $t$ for some $(\lambda,\tau)=(\lambda_a,\tau_a)\in\Lambda\times \Tset^q$, or, equivalently,
	\begin{align*}
	a_1f_{i_1,\lambda}(\tau,t)+\cdots+a_s f_{i_s,\lambda}(\tau,t)&\  \text{is bounded in $t$ for all }(\lambda,\tau)\in\Lambda\times \Tset^q\nonumber\\
	&\Longrightarrow a_1=\ldots=a_s=0.
	\end{align*}
\end{dfn}
Note that Definition~\ref{dfn:unbdd_A_family} reduces to Definition~\ref{dfn:unbdd_A} when $\Lambda$ is a singleton.

We introduce the subsets $\mathscr{Q}'_{1}\supset\mathscr{Q}'_{2}\supset\ldots \supset\mathscr{Q}'_{q-1}$ of $U$ as
\begin{align*}
\mathscr{Q}'_{s'}:=\left\{c\in U; \{d_\tau \mathscr{A}_{F_i}^c\}_{i\in\{1,\ldots,n-q\}}\ \text{has $s'$ unbounded directions}\right\},
\end{align*}
for $s'=1,\ldots,q-1$, where 
\begin{equation}\label{eqn:Melnikov_family}
d_\tau \mathscr{A}_{F_i}^c=\left(\frac{\partial \mathscr{A}_{F_i}^c}{\partial \tau_1}, \ldots, \frac{\partial \mathscr{A}_{F_i}^c}{\partial \tau_q}\right).
\end{equation}
The set of parameters $\lambda\in\Lambda$ in this case is given by $i\in\{1,\ldots,n-q\}$.

\begin{thm}\label{thm:main2}
	Assume that (A1), (A2), (A3) and (A4) hold.
	Let $s'\in\{1,\ldots,q-1\}$. If $\mathscr{Q}'_s$ is a key set for $C^\omega(U)$, then for $|\varepsilon|\neq 0$ the perturbed system $X_\varepsilon$ has at most $q-s'$ real-analytic commutative vector fields in $(x,\varepsilon)$ which are linearly independent near $\varepsilon=0$.
\end{thm}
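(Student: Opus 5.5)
Suppose, for contradiction, that for $\varepsilon\neq0$ the system $X_\varepsilon$ admits $q-s'+1$ real-analytic commuting vector fields $Z_1(x,\varepsilon),\ldots,Z_{q-s'+1}(x,\varepsilon)$ that are linearly independent near $\varepsilon=0$. I would use the cotangent lift of Remark~\ref{rmk:lift}: commuting vector fields of $X_\varepsilon$ become first integrals $h_{Z_j}$ of the lifted Hamiltonian $h_{X_\varepsilon}$. The lifted unperturbed system $\hat X^0$ is integrable in action-angle form. In coordinates $(I,\theta,p_I,p_\theta)$ it reads $\dot p_\theta=0$ and $\dot p_I=-d_I\omega(I)^{T}p_\theta$, and $h_{Y_i}=p_{\theta_i}$ by Remark~\ref{rmk:Y}. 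The plan is then to mimic the proof of Theorem~\ref{thm:main1} on the lifted level, with the $q$ first integrals $p_{\theta_1},\ldots,p_{\theta_q}$ playing the role of the $F_j$. The Melnikov functions for these integrals are computed along lifted orbits using $d(p_{\theta_i})(\hat X^1)=-\partial_{\theta_i}(p\cdot X^1)$. Restricting to $p_\theta=0$ (so that $p_I$ is constant along the unperturbed flow), they become linear combinations of $\partial_{\tau}\mathscr{A}^c_{F_i}$ with coefficients $p_{I_i}$. This explains why the family $\{d_\tau\mathscr{A}^c_{F_i}\}_i$ indexed by $\lambda=i$ appears.

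\textbf{Step 1.} Expand each $Z_j(x,\varepsilon)=Z_j^0(x)+\varepsilon Z_j^1(x)+O(\varepsilon^2)$. After taking linear combinations with coefficients analytic in $\varepsilon$ (the standard Ziglin-type reduction), I would arrange that the leading terms $Z_j^{0}$ are independent. The zeroth-order condition $[X^0,Z_j^0]=0$, together with (A3) (a dense orbit on almost every torus) and (A4) (nondegeneracy of $\omega$), forces $Z_j^0=\sum_i \alpha_{ji}(I)\,\partial_{\theta_i}$ with coefficients depending on $I$ alone. This step is the analogue of the lemma, used for Theorem~\ref{thm:main1}, that a first integral of $X^0$ is a function of $F$. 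It uses (A4) to rule out $\partial_I$ components: a term $\beta(I)\partial_I$ would generate $d_I\omega\cdot\beta$ in the bracket, which must vanish, forcing $\beta=0$.

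\textbf{Step 2.} Next comes the first-order equation $[X^0,Z_j^1]+[X^1,Z_j^0]=0$. I would integrate its $\theta$-component along $\gamma^c_\tau$ over $[0,T]$. The term $[X^0,Z_j^1]$ contributes the derivative of $Z_j^1$ along the flow, namely a boundary term $Z_j^1(\gamma(T))-Z_j^1(\gamma(0))$. This term is bounded in $T$ because the torus is compact. The term $[X^1,Z_j^0]$ contains $\sum_i\alpha_{ji}\partial_{\theta_i}X^1$ plus terms involving $d_I\alpha_j$ applied to the $I$-component of $X^1$. Pairing this with a suitable covector and using (A4) to invert $d_I\omega$ on the action directions, one gets that for every $i\in\{1,\ldots,n-q\}$ and every $\tau$, the combination $\sum_k \alpha_{jk}(c)\,\partial_{\tau_k}\mathscr{A}^c_{F_i}(\tau,T)$ is bounded in $T$. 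Equivalently, the vectors $\alpha_j(c)\in\Rset^q$, for $j=1,\ldots,q-s'+1$, all lie in the annihilator of the unbounded part of the family $\{d_\tau\mathscr{A}^c_{F_i}\}_i$.

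\textbf{Step 3.} Fix $c\in\mathscr{Q}'_{s'}$. The family has $s'$ unbounded directions, so the set of $\alpha\in\Rset^q$ making all the above combinations bounded for all $(i,\tau)$ has dimension at most $q-s'$. Hence the $q-s'+1$ vectors $\alpha_j(c)$ are linearly dependent, and every $(q-s'+1)$-minor of the matrix $(\alpha_{jk}(c))$ vanishes on $\mathscr{Q}'_{s'}$. These minors are analytic on $U$, and $\mathscr{Q}'_{s'}$ is a key set, so they vanish identically. Thus the $Z^0_j$ are dependent everywhere. Finally, the reduction of Step 1 is iterated: replace a combination by $\varepsilon^{-1}$ times itself and repeat, which terminates because the fields are analytic and independent for $\varepsilon\neq0$. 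This yields a contradiction.

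\textbf{Main obstacle.} I expect Step 2 to be hardest: one must show precisely that the relevant combinations of $\partial_\tau\mathscr{A}^c_{F_i}$ are bounded, controlling the $I$-derivative terms of $\alpha_j$ and the contribution of the $I$-component of $Z_j^1$. That $I$-component yields terms growing linearly in $T$ through $d_I\omega$. I would first try to handle it via the lifted-Hamiltonian computation on $\{p_\theta=0\}$, where Theorem~\ref{thm:main1}'s argument applies verbatim.
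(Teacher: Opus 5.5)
Your architecture matches the paper's. It has the normal form $Z^0_j=\sum_k\alpha_{jk}(I)\,\partial_{\theta_k}$ from (A3)--(A4) (the paper invokes Lemma~3.11 of \cite{MY23}), the lifted orbits on $\{p_\theta=0\}$, i.e.\ $(\gamma^c_\tau,dF_i)$, and the minors/key-set argument of Step~3. Step~3 is fine: your remark that the bounded annihilator has dimension at most $q-s'$ is a clean substitute for Definition~\ref{dfn:unbdd_B_family}. But Step~2, the heart of the proof, fails as written. In action--angle coordinates write $X^1=a\,\partial_I+b\,\partial_\theta$ and $Z^1_j=A_j\,\partial_I+B_j\,\partial_\theta$.
\begin{itemize}
\item The $\theta$-component of $[X^0,Z^1_j]$ is $\omega\cdot\partial_\theta B_j-d_I\omega\,A_j$, which is \emph{not} a derivative along the flow. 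The term $d_I\omega\,A_j$ integrates to something of order $T$ that you cannot control, as you noticed yourself.
\item The $\theta$-component of $[X^1,Z^0_j]$ is $d_I\alpha_j\,a-\alpha_j\cdot\partial_\theta b$. It involves the angular part $b$ of $X^1$, while $a_i=dF_i(X^1)$ enters only undifferentiated in $\theta$.
\end{itemize}
So $\partial_\tau\mathscr{A}^c_{F_i}$ never appears in that component. No pairing with a covector, combined with an inversion of $d_I\omega$, will produce the bound you claim.

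The missing step is to take the \emph{action} component, i.e.\ apply $dF_i$ to the first-order equation $[X^0,Z^1_j]+[X^1,Z^0_j]=0$. Since $X^0F_i=0$ and $Z^0_jF_i=0$, it reads $X^0(Z^1_jF_i)=Z^0_j(X^1F_i)=\sum_k\alpha_{jk}(I)\,\partial_{\theta_k}\bigl(dF_i(X^1)\bigr)$. Integrating along $\gamma^c_\tau$ gives $\sum_k\alpha_{jk}(c)\,\partial_{\tau_k}\mathscr{A}^c_{F_i}(\tau,T)=(Z^1_jF_i)(\gamma^c_\tau(T))-(Z^1_jF_i)(\gamma^c_\tau(0))$. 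This is bounded because the torus is compact, and (A4) is not needed here, only in Step~1. This is exactly your fallback and exactly the paper's proof. Since $h_Z(x,dF_i(x))=dF_i(Z(x))$, expanding the lifted first integrals $h_{Z^\varepsilon_j}$ along $\hat\gamma^c_{\tau,F_i}=(\gamma^c_\tau,dF_i)$ (that is, $p_\theta=0$, $p_I=e_i$) as in the proof of Theorem~\ref{thm:main1} yields the same identity.

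Finally, drop the Ziglin-type reduction in Steps~1 and~3. The statement assumes independence near, hence at, $\varepsilon=0$, and the paper uses exactly that. As you describe it, the reduction would not work anyway. Leading terms such as $\partial_{\theta_1}$ and $I_1\partial_{\theta_1}$ are pointwise dependent without any constant linear relation. Combinations with $I$-dependent coefficients no longer commute with $X_\varepsilon$.
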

\begin{rmk}
\label{rmk:Z_1}  
Note that \eqref{sys} has at least one commutative vector field since $X_\varepsilon$ commutes with itself.  If $Z^\varepsilon_1,\ldots,Z^\varepsilon_{q-s'+1}$  denote the commutative vector fields of the perturbed system, we adopt the convention that $Z^\varepsilon_1=X_\varepsilon$.
Therefore, $\{d_\tau \mathscr{A}_{F_i}^c\}_{i\in\{1,\ldots,n-q\}}$ cannot have $q$ unbounded directions, since $\frac{\partial}{\partial_{\tau_1}}\mathscr{A}_{F_i}^c$ is bounded. We will show this in Remark~\ref{rmk:q_unbdd_direc}.
\end{rmk}


\begin{rmk}
	Our results  extend the previous results in \cite{MY23}. 
    In particular, we do not require the assumption (A3) from \cite{MY23}, which amounts to the existence of a family of periodic orbits parametrized by a key set of $I$'s. 
   In contrast, here we can consider the case that the unperturbed system has no periodic orbit.
	Moreover, Theorems~\ref{thm:main1} and \ref{thm:main2} enable us to obtain information  not only on the integrability of Eq.~\eqref{sys} but also on the numbers of first integrals and commutative vector fields of \eqref{sys}.
	In this sense, these quantities measure the dynamical complexity of the perturbed system \eqref{sys}.
\end{rmk}
 
\begin{rmk}[Sharpness of main results]\label{rmk:sharp}
The upper bounds in Theorems~\ref{thm:main1} and \ref{thm:main2} are sharp, i.e.,
there are examples for which $n-q-s$ functionally independent first integrals (resp. $q-s'$ linearly independent commutative vector fields) persist.
See Remarks~\ref{rmk:sharp2} and ~\ref{rmk:penduli_sharp}, and the cases (ii) and (iii) of Proposition~\ref{prop:fi_pendla}.
\end{rmk}

\subsection{The Hamiltonian case}\label{sec:Hamiltonian_systems}

We next consider the case when ~\eqref{sys} is a Hamiltonian system.
Let $m\ge 2$ be an integer and
 let $(\M,\Omega)$ be a $2m$-dimensional analytic symplectic manifold
 with a symplectic form $\Omega$.
 Consider an analytic Hamiltonian $H_\epsilon(x)=H^0(x)+\epsilon H^1(x)+ \epsilon^2 H^2(x,\epsilon)$
 for \eqref{sys}
 depending analytically on $\epsilon$ near $\epsilon=0$.
We assume the following:
\begin {enumerate}
\setlength{\leftskip}{-0.6em}
\item[(A1')]
The unperturbed Hamiltonian system \eqref{sys} with the Hamiltonian $H^0(x)$
 is real-analytically Liouville-integrable, i.e.,
 there exist $m$ functionally independent analytic first integrals $F_1(x)(:=H^0(x )),F_2(x),\ldots,F_m(x)$
 such that they are pair-wise Poisson commutative,
 i.e., $\{F_j,F_k\}=0\ (j,k=1,\ldots,m)$,
 where $\{\cdot,\cdot\}$ denotes the Poisson bracket for the symplectic form $\Omega$.
\end{enumerate}
Note that (A1') corresponds to (A1) for $q=m$ and $Y_j=X_{F_j}$ ($j=1, \ldots, m$),
 where $X_{F_j}$ denotes the Hamiltonian vector field for the Hamiltonian $F_j(x)$.
Then Theorem~\ref{thm:main1} provides an obstruction to Liouville integrability for nearly integrable Hamiltonian systems. 
\begin{cor}\label{cor:Hsys}
Suppose that (A1'), (A2) and (A3) hold.
If $\mathscr{Q}_1$ is a key set for $C^\omega(U)$ with $X_1=X_{H^1}$,
 then for $|\epsilon|\neq0$ sufficiently small, the perturbed Hamiltonian $H_\epsilon$
 is not real-analytically Liouville-integrable near $F^{-1}(c)$
 with $m-1$ additional first integrals depending analytically on $\epsilon$.
\end{cor}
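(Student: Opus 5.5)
The plan is to deduce Corollary~\ref{cor:Hsys} directly from Theorem~\ref{thm:main1} with $s=1$, arguing by contradiction. Assume that for all small $|\epsilon|\neq0$ the Hamiltonian $H_\epsilon$ is real-analytically Liouville-integrable near $F^{-1}(c)$. Concretely, assume there are $m-1$ analytic first integrals $G_2(x,\epsilon),\ldots,G_m(x,\epsilon)$, depending analytically on $\epsilon$, which Poisson commute with $H_\epsilon$ and with each other. Assume also that they are functionally independent together with $H_\epsilon$ for $\epsilon\neq0$.

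The key observation is that $H_\epsilon$ is itself an analytic first integral of $X_\epsilon=X_{H_\epsilon}$, analytic in $(x,\epsilon)$. Hence the family $H_\epsilon,G_2,\ldots,G_m$ consists of $m$ real-analytic first integrals in $(x,\epsilon)$ that are functionally independent in $x$ for $\epsilon\neq0$. Poisson commutativity is not needed for this count; only functional independence is used.

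Next, check that the hypotheses of Theorem~\ref{thm:main1} hold. As noted after (A1'), that assumption gives (A1) with $n=2m$, $q=m$, $Y_j=X_{F_j}$, and the first integrals $F_1=H^0,\ldots,F_m$. Thus $n-q=m$. Assumptions (A2) and (A3) are assumed directly. The perturbation term is $X^1=X_{H^1}$, so the Melnikov functions are $\mathscr{A}^c_{F_j}(\tau,T)=\int_0^T dF_j(X_{H^1})\,dt=\int_0^T\{F_j,H^1\}\,dt$ along $\gamma^c_\tau$. These are exactly the quantities defining $\mathscr{Q}_1$. Theorem~\ref{thm:main1} with $s=1$ then says that $X_\epsilon$ has at most $n-q-1=m-1$ such first integrals. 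This contradicts the $m$ found above.

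One small point needs care. The first component $\mathscr{A}^c_{F_1}=\int_0^T\{H^0,H^1\}\,dt=\int_0^T \frac{d}{dt}H^1(\gamma^c_\tau(t))\,dt$ is bounded. So the unbounded direction must come from $F_2,\ldots,F_m$. This does not affect the argument, since membership in $\mathscr{Q}_1$ only requires that some linear combination be unbounded, and the key-set hypothesis is assumed for $\mathscr{Q}_1$ as defined. The remaining task is to make sure the meaning of ``depending analytically on $\epsilon$'' in the corollary matches the regularity required in Theorem~\ref{thm:main1}. No real obstacle is expected there, since both require joint analyticity in $(x,\epsilon)$ near $F^{-1}(c)$ and $\epsilon=0$.
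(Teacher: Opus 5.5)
Your proposal is correct and is exactly the intended derivation: the paper obtains Corollary~\ref{cor:Hsys} from Theorem~\ref{thm:main1} with $s=1$, $n=2m$, $q=m$, so that $H_\epsilon$ together with the $m-1$ additional integrals exceeds the bound $n-q-1=m-1$. Your observation that $\mathscr{A}^c_{F_1}$ is always bounded, so that any unbounded direction must come from $F_2,\ldots,F_m$, also matches the remark the paper makes right after the corollary.
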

We remark that $\mathscr{A}_{F_1}^c(\tau,T)(=\mathscr{A}_{H^0}^c(\tau,T))$ is always bounded on $T$ for all $(c,\tau)\in U\times \Tset^q$
since
\[
	\mathscr{A}_{H^0}^c(\tau,T)=\int_0^T X_{H^1}(H^0)_{\gamma^c_\tau(t)}dt
	=-\int_0^T X_{H^0}(H^1)_{\gamma^c_\tau(t)}dt=H^1(\gamma^c_\tau(0))-H^1(\gamma^c_\tau(T)).
\]

\subsection{Geometric interpretation}\label{sec:microdiffusion}

In this section, we give a geometric interpretation of Theorem \ref{thm:main1}. We will see that the condition that the Melnikov vectors defined in Section~\ref{sec:main} have an unbounded direction leads to the existence of certain drifting orbits, a phenomenon reminiscent of Arnold diffusion. We recall that the Arnold diffusion problem asserts that small, generic perturbations of integrable systems have orbits along which the  `action variable' changes by $O(1)$ with respect to the perturbation parameter (for a recent survey, see \cite{gidea2020general}). 

More precisely, we will show that,  under the assumption that some component of our Melnikov vectors is unbounded, we obtain either micro-diffusion or Arnold diffusion.
The former occurs in the case that the solutions of the perturbed systems remain confined to compact sets, as is typically the case. In this case, there exist orbits of the perturbed system undergo a change in the first integral $F_j$ by $O(\varepsilon)$ for some $j\in \{1,\ldots,n-q\}$, where the corresponding constant can be chosen arbitrary large.
Since  we only obtain a small change of a first integral (or of its derivative) along orbits, we refer  to this as \emph{micro-diffusion}. (This terminology is borrowed from \cite{bounemoura2016note}).
The latter occurs in the case that the solutions of the perturbed systems are not confined to some compact sets. In this case, there exist arbitrarily small $\varepsilon>0$ and corresponding  orbits of the  perturbed system which undergo a change in $F_j$ by $O(1)$, for some $j\in \{1,\ldots,n-q\}$.

We now state our precise statement.

\begin{prop}\label{cor:microdiffusion}
Let  $c$  be a regular value  of $F=(F_1,\ldots,F_{n-q})$. We have the following: 
\begin{itemize}
\item[(i)] Assume that  there exists a compact neighborhood $K$ of  $F^{-1}(c)$  such that for every  $\varepsilon \in (0,\epsilon_0)$ the solution $x_\varepsilon (t)$  of \eqref{sys} with $x_\varepsilon (0)=\gamma^c_\tau(0)$ stays in $K$ for all $t$. In this case we have:

\begin{itemize}
\item[(i.a)]
If, for some $j\in\{1,\ldots,n-q\}$, there exists $\tau$ such that $\mathscr{A}_{F_j}^c(\tau,T)$ is unbounded in $T$, 
then,  for every $C>0$  there exists (arbitrary large) $T>0$ and (arbitrary small) $\varepsilon^*\in(0,\varepsilon_0)$  (depending on $T$)  such  that  for every $\varepsilon\in(0,\epsilon^*)$
the  solution $x_\varepsilon (t)$ of \eqref{sys} with $x_\varepsilon (0)=\gamma^c_\tau(0)$ satisfies
\[|F_j(x_\varepsilon (T))-F_j(x_\varepsilon (0))|>C \varepsilon.\]

\item[(i.b)]
If, for some $j \in \{1, \ldots, n-q\}$ the vector-valued map $d_\tau A_{F_j}^c=\left(\frac{\partial \mathscr{A}_{F_j}^c}{\partial \tau_1}, \ldots, \frac{\partial \mathscr{A}_{F_j}^c}{\partial \tau_q}\right)$ has $q-1$ unbounded directions, then there exists $\tau$ such that
 for every $C>0$  there exists (arbitrary large) $T>0$ and (arbitrary small) $\varepsilon^*\in(0,\varepsilon_0)$  (depending on $T$) such  that  for every $\varepsilon\in(0,\epsilon^*)$
the  solution $x_\varepsilon (t)$ of \eqref{sys} with $x_\varepsilon (0)=\gamma^c_\tau(0)$ satisfies
\[\left\vert\frac{d}{dt}F_j(x_\varepsilon (T))-\frac {d}{dt} F_j(x_\varepsilon (0))\right\vert>C \varepsilon .\]
\end{itemize}

\item [(ii)] If the assumption (i) is not satisfied, then  for any compact neighborhood $K$ of  $F^{-1}(c)$  and any $\varepsilon^*\in(0,\varepsilon_0)$ there exists $\varepsilon\in (0,\varepsilon^*)$ such that the solution $x_\varepsilon (t)$ of \eqref{sys} with $x_\varepsilon (0)=\gamma^c_\tau(0)$ leaves  $K$. Therefore, there is some $j\in\{1,\ldots,n-q\}$ such that  $F_j(x_\varepsilon (t))$ changes by $O(1)$ over time.
\end{itemize}
\end{prop}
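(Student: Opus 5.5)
The approach is a first-order perturbative expansion of $F_j$ along perturbed solutions on a fixed finite time interval, combined with the unboundedness of the Melnikov functions. Fix $c$, $\tau$ and $j$ as in (i.a), and let $x_\varepsilon(t)$ be the solution of \eqref{sys} with $x_\varepsilon(0)=\gamma^c_\tau(0)$. Since $dF_j(X^0)=0$, we have
\[
F_j(x_\varepsilon(T))-F_j(x_\varepsilon(0))=\varepsilon\int_0^T dF_j(X^1)_{x_\varepsilon(t)}\,dt+\varepsilon^2\int_0^T dF_j(X^2(\cdot,\varepsilon))_{x_\varepsilon(t)}\,dt .
\]
The hypothesis of (i) says that $x_\varepsilon(t)\in K$ for all $t$. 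Hence the solution exists on $[0,T]$, and the second integral is bounded by $M_K T$, where $M_K=\sup_{K\times[0,\varepsilon_0/2]}|dF_j(X^2)|$. Analytic, hence smooth, dependence on the parameter and Gronwall's inequality on the compact set $K$ give $\sup_{t\in[0,T]}\|x_\varepsilon(t)-\gamma^c_\tau(t)\|\le L_T\varepsilon$ for some constant $L_T$ depending on $T$, measured in a fixed chart or metric. Since $dF_j(X^1)$ is Lipschitz on $K$, we obtain
\[
\bigl|F_j(x_\varepsilon(T))-F_j(x_\varepsilon(0))-\varepsilon\,\mathscr{A}^c_{F_j}(\tau,T)\bigr|\le \varepsilon^2 C_T .
\]

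The conclusion of (i.a) then follows directly. Given $C>0$, unboundedness of $T\mapsto\mathscr{A}^c_{F_j}(\tau,T)$ yields arbitrarily large $T$ with $|\mathscr{A}^c_{F_j}(\tau,T)|>2C$. For that fixed $T$ we choose $\varepsilon^*<C/C_T$. For every $\varepsilon\in(0,\varepsilon^*)$ the difference then exceeds $2C\varepsilon-C\varepsilon=C\varepsilon$. The main technical point is that the constant $C_T$ may grow badly (exponentially) with $T$. This is harmless because $T$ is chosen first and $\varepsilon^*$ is chosen afterwards, exactly as the statement allows.

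For (i.b) the same expansion is used for $\frac{d}{dt}F_j(x_\varepsilon(t))=\varepsilon\,dF_j(X^1)_{x_\varepsilon(t)}+O(\varepsilon^2)$. The required quantity is
\[
\varepsilon\bigl[dF_j(X^1)_{\gamma^c_\tau(T)}-dF_j(X^1)_{\gamma^c_\tau(0)}\bigr]+O_T(\varepsilon^2),
\]
and the task is to relate it to $d_\tau\mathscr{A}^c_{F_j}$. In angle coordinates put $g(\theta)=dF_j(X^1)_{\varphi(c,\theta)}$, so that $\mathscr{A}^c_{F_j}(\tau,T)=\int_0^T g(\omega t+\tau)\,dt$. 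Differentiating along the flow direction gives
\[
\sum_i\omega_i(c)\,\partial_{\tau_i}\mathscr{A}^c_{F_j}(\tau,T)=g(\omega T+\tau)-g(\tau),
\]
so the combination of the components of $d_\tau\mathscr{A}^c_{F_j}$ with coefficients $\omega(c)$ is bounded. Each $\partial_{\tau_i}\mathscr{A}$ is the integral of $\partial_i g$ along the orbit. The hypothesis of $q-1$ unbounded directions should be converted into the existence of some $\tau$ for which $T\mapsto g(\omega T+\tau)$ is unbounded relative to the threshold. The most delicate step is making this precise, since $g$ itself is bounded on the torus. I would interpret it as producing, for every $C$, some $T$ with $|g(\omega T+\tau)-g(\tau)|>2C$ after suitable normalization. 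Once that is available, the choice of $T$ followed by $\varepsilon^*$ proceeds exactly as in (i.a).

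Part (ii) is the logical negation of (i). If no compact neighborhood $K$ contains all of the orbits $x_\varepsilon$ for $\varepsilon$ in some interval $(0,\varepsilon^*)$, then for each $K$ and each $\varepsilon^*$ some orbit with $\varepsilon<\varepsilon^*$ exits $K$. Take $K=F^{-1}(\overline B)\cap\overline{\mathcal U}$ for a small closed ball $\overline B$ around $c$, which is compact by (A2) and Proposition~\ref{LAJ}. Leaving $K$ through its boundary forces $|F(x_\varepsilon(t))-c|$ to reach the radius of $\overline B$, which is independent of $\varepsilon$. Therefore some component $F_j$ changes by an amount of order $O(1)$.
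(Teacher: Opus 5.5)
Your argument for (i.a) is correct and is essentially the paper's: a first-order expansion, Gronwall on a fixed window $[0,T]$, and $T$ chosen before $\varepsilon^*$ (the paper encodes the last point as $T<k\log(1/\varepsilon)$). Your (ii) is the same negation-of-(i) argument the paper gives.

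The gap is in (i.b), at exactly the step you call most delicate, and it cannot be filled. Your own (correct) identity $\omega(c)\cdot d_\tau\mathscr{A}^c_{F_j}(\tau,T)=g(\omega T+\tau)-g(\tau)$ shows this quantity is bounded by $2\max_{\Tset^q}|g|$ for all $(\tau,T)$. No normalization helps, because $\varepsilon$ has already been factored out while $C$ is arbitrary. More directly, $dF_j(X^0)=0$ and $x_\varepsilon(t)\in K$ for all $t$ under hypothesis (i), so
\[
\Bigl|\tfrac{d}{dt}F_j(x_\varepsilon(T))-\tfrac{d}{dt}F_j(x_\varepsilon(0))\Bigr|\le 2\varepsilon\Bigl(\max_K|dF_j(X^1)|+\varepsilon\sup_K|dF_j(X^2)|\Bigr)
\]
for every $T$. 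Hence the asserted inequality fails for all large $C$ whenever (i) holds. The hypotheses of (i.b) can indeed hold together with (i). On $\Rset\times\Tset^2$ take $X^0=\partial_{\theta_1}+(1+I)\partial_{\theta_2}$, $X^1=\cos(\theta_1-\theta_2)\,\partial_I$, $F=I$, $c=0$, so that $n=3$, $q=2$, and (A1)--(A4) hold.
\begin{itemize}
\item Then $\mathscr{A}^0_F(\tau,T)=T\cos(\tau_1-\tau_2)$, and $d_\tau\mathscr{A}^0_F=T\sin(\tau_1-\tau_2)\,(-1,1)$ has exactly $1=q-1$ unbounded direction.
\item With $\phi=\theta_1-\theta_2$ one has $\dot\phi=-I$ and $\dot I=\varepsilon\cos\phi$. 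So $\tfrac12 I^2+\varepsilon\sin\phi$ is conserved, $|I(t)|\le 2\sqrt{\varepsilon}$, and (i) holds with $K=[-1,1]\times\Tset^2$.
\item Yet $|\dot I(T)-\dot I(0)|=\varepsilon|\cos\phi(T)-\cos\phi(0)|\le 2\varepsilon$ for every $\tau$ and $T$.
\end{itemize}
So (i.b) as stated is false, and no completion of your sketch exists.

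You will not find the missing step in the paper. Its proof of (i.b) reaches the same identity and then asserts that $\sum_{k=1}^q\omega_k(c)\,\partial_{\tau_k}\mathscr{A}^c_{F_j}$ is unbounded. The reasoning given is that $\partial_{\tau_1}\mathscr{A}^c_{F_j}$ is bounded (Remark~\ref{rmk:q_unbdd_direc}) while the other $q-1$ components carry the unbounded directions. This mixes the convention $Y_1=X^0$ with the convention $Y_k=\partial/\partial\theta_k$. The bounded direction of $d_\tau\mathscr{A}^c_{F_j}$ is the flow direction $\omega(c)$, which is exactly the combination appearing in $\tfrac{d}{dt}F_j$; in the example above, $\partial_{\tau_1}\mathscr{A}^0_F$ itself is unbounded.

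What the hypothesis does yield, via the fixed-$T$ expansion you used for (i.a), is drift in $\tau$-derivatives. Write $x^\tau_\varepsilon$ for the solution with $x^\tau_\varepsilon(0)=\gamma^c_\tau(0)$. Then $F_j(x^\tau_\varepsilon(T))-F_j(\gamma^c_\tau(0))=\varepsilon\,\mathscr{A}^c_{F_j}(\tau,T)+\varepsilon^2R_T(\tau,\varepsilon)$, with $R_T$ analytic in $(\tau,\varepsilon)$ and $F_j(\gamma^c_\tau(0))$ independent of $\tau$. So $|\partial_{\tau_k}\mathscr{A}^c_{F_j}(\tau,T)|>2C$ gives $|\partial_{\tau_k}F_j(x^\tau_\varepsilon(T))|>C\varepsilon$ for small $\varepsilon$. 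A statement of that kind, or the cotangent-lift version in Proposition~\ref{cor:microdiffusion2}, is what can actually be proved.
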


Proposition~\ref{cor:microdiffusion}  shows that micro-diffusion in the perturbed system can be detected through changes in the first integrals $F_1,\ldots,F_{n-q}$ of the unperturbed system. The next results show that the commuting vector fields of the unperturbed system play a similar role. We will use Remark~\ref{rmk:lift} (the fact that the commuting vector field give rise to first integrals in the cotangent lift) and the notation defined there.



\begin{prop}\label{cor:microdiffusion2}
Let  $c$  be a regular value  of $F=(F_1,\ldots,F_{n-q})$,  and let $h_{Y_i}$ denote the Hamiltonian generating the cotangent lift of $Y_i$ to $T^*\mathscr{M}$, for $i=1,\ldots,q$.

We have the following: 
\begin{itemize}
\item[(i)] Assume that, for some $j\in\{1,\ldots,n-q\}$, there exists a compact neighborhood $\hat{K}_j$ of 
$\{(x,dF_j(x))\in T^*\mathscr{M}; x\in F^{-1}(c)\}$ such that for every  $\varepsilon \in(0,\varepsilon _0)$ the solution $\hat{x}_\varepsilon (t)$  of $\dot{\hat{x}}_\epsilon=\hat{X}_\epsilon(\hat{x})$ with $\hat{x}_\varepsilon (0)=(\gamma^c_\tau(0),dF_j(\gamma^c_\tau(0)))$ stays in $\hat{K}_j$ for all $t$. In this case we have:

\begin{itemize}
\item[(i.c)]
If, for some $i\in\{2,\ldots,q\}$, there exists $\tau$ such that $\frac{\partial}{\partial \tau_i}\mathscr{A}_{F_j}^c(\tau,T)$ is unbounded in $T$, 
then,  for every $C>0$ there exists (arbitrary large) $T>0$ and (arbitrary small) $\varepsilon^*\in(0,\varepsilon_0)$ (depending on $T$)  such  that for every $\varepsilon\in(0,\epsilon^*)$
the  solution $\hat{x}_\varepsilon (t)$ of $\dot{\hat{x}}_\epsilon=\hat{X}_\epsilon(\hat{x})$ with $\hat{x}_\varepsilon (0)=(\gamma^c_\tau(0),dF_j(\gamma^c_\tau(0)))$ satisfies 
\[|h_{Y_i}(\hat{x}_\varepsilon (T))-h_{Y_i}(\hat{x}_\varepsilon (0))|>C \varepsilon.\] 
\end{itemize}

\item [(ii)] If the assumption (i) is not satisfied, then  for any compact neighborhood $\hat{K}_j$ of  
$\{(x,dF_j(x))\in T^*\mathscr{M}; x\in F^{-1}(c)\}$
and any $\varepsilon^*\in(0,\varepsilon_0)$ there exists $\varepsilon\in (0,\varepsilon^*)$ such that the solution $\hat{x}_\varepsilon (t)$  of $\dot{\hat{x}}_\epsilon=\hat{X}_\epsilon(\hat{x})$ with $\hat{x}_\varepsilon (0)=(\gamma^c_\tau(0),dF_j(\gamma^c_\tau(0)))$ leaves  $\hat{K}_j$. Therefore, there is some $i\in\{2,\ldots,q\}$ such that  $h_i(\hat{x}_\varepsilon (t))$ changes by $O(1)$ over time.\end{itemize}
\end{prop}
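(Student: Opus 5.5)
The plan is to derive Proposition~\ref{cor:microdiffusion2} from part~(i.a) of Proposition~\ref{cor:microdiffusion}, applied to the cotangent-lifted system on $T^*\mathscr{M}$ with first integral $h_{Y_i}$ in place of $F_j$. By Remark~\ref{rmk:lift}, the lift $\hat X_\varepsilon$ is Hamiltonian with Hamiltonian $h_{X_\varepsilon}=h_{X^0}+\varepsilon h_{X^1}+O(\varepsilon^2)$. Since $[X^0,Y_i]=0$, we have $\{h_{X^0},h_{Y_i}\}=h_{[X^0,Y_i]}=0$, so $h_{Y_i}$ is a first integral of the unperturbed lift.

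\textbf{Step 1: a first-order expansion.} Along $\hat x_\varepsilon$ we have
\[
\frac{d}{dt}h_{Y_i}(\hat x_\varepsilon(t))=\{h_{Y_i},h_{X_\varepsilon}\}=\varepsilon\, h_{[X^1,Y_i]}(\hat x_\varepsilon(t))+O(\varepsilon^2),
\]
with a uniform constant, because $\hat x_\varepsilon$ stays in the compact set $\hat K_j$. Gronwall's lemma gives $\hat x_\varepsilon(t)=\hat\gamma(t)+O(\varepsilon e^{Lt})$ on $[0,T]$, where $\hat\gamma$ is the unperturbed lifted orbit starting at $(\gamma^c_\tau(0),dF_j(\gamma^c_\tau(0)))$. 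Hence
\[
h_{Y_i}(\hat x_\varepsilon(T))-h_{Y_i}(\hat x_\varepsilon(0))=\varepsilon\int_0^T h_{[X^1,Y_i]}(\hat\gamma(t))\,dt+O_T(\varepsilon^2).
\]

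\textbf{Step 2: identify the integral with the Melnikov derivative.} The unperturbed lift preserves the covector $dF_j$, since $F_j$ is a first integral: the flow maps $(x,dF_j(x))$ to $(\phi_t x, dF_j(\phi_t x))$. So $\hat\gamma(t)=(\gamma^c_\tau(t),dF_j(\gamma^c_\tau(t)))$ and
\[
h_{[X^1,Y_i]}(\hat\gamma(t))=dF_j([X^1,Y_i])=X^1(Y_iF_j)-Y_i(X^1F_j)=-Y_i\bigl(dF_j(X^1)\bigr),
\]
using $Y_iF_j=0$. By Remark~\ref{rmk:Y}, $Y_i=\partial/\partial\theta_i$, and $\gamma^c_\tau(t)=\varphi(c,\omega(c)t+\tau)$. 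Therefore
\[
Y_i\bigl(dF_j(X^1)\bigr)(\gamma^c_\tau(t))=\frac{\partial}{\partial\tau_i}\bigl[dF_j(X^1)_{\gamma^c_\tau(t)}\bigr],
\]
and the integral in Step~1 equals $-\frac{\partial}{\partial\tau_i}\mathscr{A}^c_{F_j}(\tau,T)$.

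\textbf{Step 3: conclude (i.c) and (ii).} For (i.c), pick $T$ with $|\partial_{\tau_i}\mathscr{A}^c_{F_j}(\tau,T)|>2C$, which is possible by unboundedness. Then choose $\varepsilon^*$ so small that the $O_T(\varepsilon^2)$ term is below $C\varepsilon$ for all $\varepsilon<\varepsilon^*$; this gives the required inequality. Part~(ii) follows by negating the confinement hypothesis. Exit from every compact $\hat K_j$ forces the lifted orbit to be unbounded in $T^*\mathscr{M}$. Since the $x$-component lies near $F^{-1}(c)$ while that hypothesis of Proposition~\ref{cor:microdiffusion} holds, and otherwise part~(ii) there already applies, the covector component must blow up. The covector is controlled by the values $h_{Y_i}$ together with the $F$-directions, so some $h_{Y_i}$ changes by $O(1)$; the case $i=1$ should be excluded by the same boundedness argument as for $h_{X^0}$.

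\textbf{Main obstacle.} The hardest step is Step~2, specifically justifying that the unperturbed lifted orbit keeps its covector equal to $dF_j$. I would verify this carefully: the flow of the cotangent lift acts on covectors by the inverse transpose of the linearized flow, $(D\phi_t^{-1})^*$, and $F_j\circ\phi_t=F_j$ gives $(D\phi_t)^*dF_j(\phi_tx)=dF_j(x)$. A secondary, more delicate point is making the covector argument in (ii) rigorous. The paper's claim there is loosely stated, so I would treat it in the same way the paper treats Proposition~\ref{cor:microdiffusion}(ii).
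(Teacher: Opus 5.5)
Your proof of (i.c) is correct and follows the route the paper intends. The paper only says this proposition is proved like Proposition~\ref{cor:microdiffusion}(i.a): a first-order expansion, Gronwall, then $T$ large and $\varepsilon$ small. Your Step~2 is exactly the computation $h_{[X^1,Y_i]}(x,dF_j(x))=-Y_i(X^1F_j)$ from the proof of Theorem~\ref{thm:main2}, integrated to $-\partial_{\tau_i}\mathscr{A}^c_{F_j}(\tau,T)$. You are also right that $\hat\gamma$ is an orbit of the unperturbed lift $\hat X_0$; the paper writes $\hat X_\varepsilon$ there. Fixing $T$ first and then beating the $O_T(\varepsilon^2)$ remainder is a cleaner form of the paper's $T<k\log(1/\varepsilon)$ bookkeeping.

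The gap is in (ii), and it cannot be closed. Your case split does not reach the stated conclusion. If the $x$-component escapes, you only recover Proposition~\ref{cor:microdiffusion}(ii), i.e.\ a change in some $F_j$, not in any $h_{Y_i}$. If $x$ stays near $F^{-1}(c)$, the inference ``the covector is controlled by the $h_{Y_i}$ together with the $F$-directions, so some $h_{Y_i}$ changes'' is a non sequitur. The blow-up may lie entirely in the covector components transverse to the tori, which $h_{Y_1},\dots,h_{Y_q}$ do not see.

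In fact the last sentence of (ii) is false in general. In action--angle coordinates take $X^1=-\sum_k(I_k-c_k)\partial_{I_k}$ and $X^2=0$. The lifted Hamiltonian is $p_\theta\cdot\omega(I)-\varepsilon\sum_k(I_k-c_k)p_{I_k}$. Start from $(c,\tau)$ with covector $dF_j=\sum_k\partial_{I_k}F_j(c)\,dI_k$, which has no $d\theta$ part because $F_j$ depends only on $I$. Then $I\equiv c$, $p_\theta\equiv 0$, and $p_I(t)=e^{\varepsilon t}\,\partial_I F_j(c)$. So for every $j$ and every $\varepsilon>0$ the lifted orbit leaves every compact set, and (i) fails. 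Meanwhile every $h_{Y_i}$ vanishes identically, since $dF(Y_i)=0$ forces $Y_i$ to be tangent to the tori, making $h_{Y_i}$ a combination of the $p_{\theta_k}$.

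What can actually be proved in (ii) is only the exit statement, i.e.\ the negation of (i). Even then, getting $\varepsilon$ arbitrarily small requires negating (i) for every $\varepsilon_0$. That exit statement is all the paper's one-line argument establishes. You should restrict (ii) to it rather than try to extract a change in some $h_{Y_i}$.
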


\section{Proofs of the main results}\label{sec:pf}
In this section, we present an equivalent definition  of ``maps with $s$ unbounded directions", 
and then give the proofs of the main theorems.

\subsection{Characterization of maps  with $s$ unbounded directions }
We provide an equivalent definition of ``maps with $s$ unbounded directions", which will be used directly in the proofs.
\begin{dfn}\label{dfn:unbdd_B}
	Let $k\in\Nset$ and $s\in\{1,\ldots,k\}$. We say a map $f=(f_1,\ldots,f_k):\Tset^q\times \Rset\to \Rset^k$ has $s$ unbounded directions  in another sense
	if $s$ is the largest positive integer with the following property:
	for any surjective linear map $L:\Rset^k\to\Rset^{k-s+1}$ there exists $\tau=\tau_L\in\Tset^q$ such that $t \mapsto L\circ f(\tau_L,t)\ (t\in\Rset)$ is unbounded in $t$.
\end{dfn}
It is easy to see that for $s=1$, both of  Definition~\ref{dfn:unbdd_A} and Definition~\ref{dfn:unbdd_B} mean that $f$ is unbounded for some $\tau\in\Tset^q$.
It seems not easy to understand the meaning of Definition~\ref{dfn:unbdd_B}, but this condition plays an essential role in our proofs.
We will prove that Definition~\ref{dfn:unbdd_A} and Definition~\ref{dfn:unbdd_B} are equivalent.
Note that we can verify Definition~\ref{dfn:unbdd_A} for a given map easily in applications as demonstrated in the following concrete example.
\begin{exm}\label
{exm:1}
	Fix $k=4,q=1$ and $s=3$. Consider $f:\Tset\times\Rset\to \Rset^4$ defined as $f(\tau,t)=(t,t+\sin\tau, t\sin\tau, t^2)^\top$ where $(\cdot)^\top$ stands for the transpose matrix.
	
	We first check that $f$ has $3$ unbounded directions in the sense of Definition~\ref{dfn:unbdd_A}.
	Assume that $a_1t+a_3 t\sin\tau +a_4 t^2=t(a_1+a_3\sin\tau+a_4 t)$ is bounded for all $\tau\in\Tset$.
	Then we have $a_1=a_3=a_4=0$.
	On the other hand, when $a_1=-a_2=1$ and $a_3=a_4=0$, we have $a_1t+a_2(t+\sin\tau)+a_3 t\sin\tau +a_4 t^2=-\sin\tau$\ (bounded in $t$ for all $\tau\in\Tset$), which implies $f$ has just $3$ unbounded directions in the sense of Definition~\ref{dfn:unbdd_A}.
	
	Next, we check that $f$ has $3$ unbounded directions in the sense of Definition~\ref{dfn:unbdd_B}.
	Take any (not surjective a priori) linear map $L:\Rset^4\to \Rset^{4-3+1}=\Rset^2$,
	given by $L=\begin{pmatrix} a & b & c& d\\ a' & b' & c' & d'\end{pmatrix}$.
	Then
	$
	L\circ f(\tau,t)=\begin{pmatrix} at+b(t+\sin\tau)+c\sin\tau t+d t^2\\ 
	a't+b'(t+\sin\tau)+c't\sin\tau+d't^2\end{pmatrix}.
	$
	Thus, if $L\circ f(\tau,\cdot)$ is bounded for all $\tau\in\Tset$, then we have $b=-a, c=d=0, b'=-a',c'=d'=0$, i.e., $L=\begin{pmatrix} a & -a & 0&0\\ a' & -a' & 0 &0\end{pmatrix}$ and $\mathrm{rank} (L)\le1$.
	This implies that if $L$ is surjective (equivalently, $\mathrm{rank} (L)=2$) then there exists $\tau\in\Tset$ such that $L\circ f(\tau,\cdot)$ is unbounded in $t$.
	On the other hand, if we take $s=4$, then for  $L=(1\ -1\ 0\ 0)$ we see that $L\circ f(\tau,\cdot)$ is bounded in $t$ for all $\tau\in\Tset$.
	Therefore, $f$ has $3$ unbounded directions in the sense of Definition~\ref{dfn:unbdd_B}.
\end{exm}

We now show the following important proposition:
\begin{prop}\label{prop:equivalence_unbdd}
	Let $s\in\{1,\ldots,k\}$ and $f=(f_1,\ldots,f_k):\Tset^q\times \Rset\to \Rset^k$.
	Then $f$ has $s$ unbounded directions  in the sense of Definition~\ref{dfn:unbdd_A}
	if and only if $f$ has $s$ unbounded directions  in the sense of Definition~\ref{dfn:unbdd_B}.
\end{prop}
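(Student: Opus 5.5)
Both definitions define $s$ as the largest integer with a certain property, so I will show that the two properties define the same number. I recast both in linear-algebra terms. Let $V$ be the real vector space of functions $\Tset^q\times\Rset\to\Rset$, and let $B\subset V$ be the subspace of functions $g$ such that $g(\tau,\cdot)$ is bounded for every $\tau\in\Tset^q$. Let $\hat f_i$ denote the class of $f_i$ in $V/B$, and put
\[W=\operatorname{span}\{\hat f_1,\ldots,\hat f_k\}\subset V/B, \qquad N=\{a\in\Rset^k:\ \textstyle\sum_i a_i f_i\in B\}.\]
Then the map $\Rset^k\to W$, $a\mapsto \sum_i a_i\hat f_i$, is surjective with kernel $N$, so $\dim W=k-\dim N$.

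\emph{Step 1 (Definition~\ref{dfn:unbdd_A}).} The property in Definition~\ref{dfn:unbdd_A} for indices $i_1,\ldots,i_s$ says exactly that $\hat f_{i_1},\ldots,\hat f_{i_s}$ are linearly independent in $V/B$. The largest such $s$ is therefore the maximal size of a linearly independent subfamily of the generators $\hat f_1,\ldots,\hat f_k$. This equals $\dim W$, since any spanning family contains a basis.

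\emph{Step 2 (Definition~\ref{dfn:unbdd_B}).} Write a linear map $L:\Rset^k\to\Rset^{k-s+1}$ by its rows $\ell_1,\ldots,\ell_{k-s+1}\in\Rset^k$. Then $L\circ f(\tau,\cdot)$ is bounded for all $\tau$ if and only if each component $\ell_j\cdot f$ lies in $B$, that is, if and only if every row $\ell_j$ lies in $N$. The map $L$ is surjective exactly when its rows are linearly independent. Hence there is a surjective $L$ with $L\circ f(\tau,\cdot)$ bounded for all $\tau$ if and only if $N$ contains $k-s+1$ independent vectors, i.e. $\dim N\ge k-s+1$. So the property in Definition~\ref{dfn:unbdd_B} holds for $s$ if and only if $\dim N\le k-s$, i.e. $s\le k-\dim N=\dim W$. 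Note that this property is monotone in $s$. The largest $s$ with the property is therefore $\dim W$.

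\emph{Step 3.} By Steps 1 and 2, both definitions produce $s=\dim W$, which proves the equivalence. The degenerate case $\dim W=0$ occurs in neither definition, since both require $s\ge1$.

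The main point needing care is Step 2. One must check the translation of boundedness of a vector-valued map componentwise for all $\tau$ simultaneously, and one must verify monotonicity in $s$, so that "largest $s$" is well defined and matches. Since the condition "for all $\tau$" is built into both $B$ and $N$, no further argument about the choice of $\tau_L$ is required.
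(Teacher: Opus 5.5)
Your argument is correct, and it takes a different route from the paper's.

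The paper fixes $s$ and shows directly that the two properties imply each other, using explicit matrix manipulations.
\begin{itemize}
\item For the direction from Definition~\ref{dfn:unbdd_B} to Definition~\ref{dfn:unbdd_A}, it argues by contraposition. It takes the bounded combinations supplied on the successive windows $f_j,\ldots,f_{j+s-1}$, for $j=1,\ldots,k-s+1$. It reorders components so that the $j$-th coefficient is nonzero, and stacks these combinations into a staircase $(k-s+1)\times k$ matrix of full rank.
\item For the converse, it row-reduces a hypothetical surjective $L$ with $L\circ f$ bounded. The last row of the reduced echelon form has its pivot in a column $\ge k-s+1$. This yields a nontrivial bounded combination of the designated components $f_{k-s+1},\ldots,f_k$, a contradiction.
\end{itemize}

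You instead identify both thresholds with the single invariant $\dim W=k-\dim N$, via rank--nullity in $V/B$. This buys several things:
\begin{itemize}
\item It makes explicit the monotonicity in $s$, and hence the meaning of ``largest''. It also handles the degenerate case $\dim W=0$. The paper leaves both points implicit.
\item It characterizes the number of unbounded directions intrinsically, as the rank of $f$ modulo functions bounded for every $\tau$.
\item It carries over verbatim to Proposition~\ref{prop:equivalence_unbdd_family}: just take $B$ to be the functions bounded in $t$ for every $(\lambda,\tau)$.
\item The subspace $N$ is exactly what the proof of Theorem~\ref{thm:main1} uses. The rows of $d\psi$ lie in $N$, so the rank of $d\psi$ is at most $\dim N$.
\end{itemize}

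The paper's route is more hands-on: it exhibits the offending $L$ or the offending combination concretely, at the cost of some index bookkeeping.
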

\begin{proof}
	 We first prove Definition~\ref{dfn:unbdd_B} $\Rightarrow$ Definition~\ref{dfn:unbdd_A} by contraposition.
	  Assume that for every $s$-components $f_{i_1},\ldots, f_{i_s}$ of $f$,
	  there exists $a=(a_1,\ldots,a_s)\in\Rset^s\setminus\{0\}$ such that
	  $a_1f_{i_1}+\ldots+a_l f_{i_s}$ is bounded in $t$ for all $\tau\in\Tset^q$.
	  We want to prove that there exists a surjective linear map $L:\Rset^k\to \Rset^{k-s+1}$ such that for all $\tau\in\Tset^q$ $L\circ f(\tau,\cdot)$ is bounded in $t$.
	  
	  Take the $s$-components $f_1,\ldots,f_s$ of $f$. Then there exists $a^{(1)}=(a^{(1)}_1,\ldots, a^{(1)}_s)\in\Rset^s\setminus\{0\}$ such that
	  $a^{(1)}_1f_{1}+\ldots+a^{(1)}_s f_{s}$ is bounded in $t$ for all $\tau\in\Tset^q$.
	  By rearranging the components of $f$ among $f_1,\ldots,f_s$ we can assume that $a^{(1)}_1\neq 0$.
	  Take next $s$-components $f_2,\ldots, f_{l+1}$ of $f$.
	  Then there exists $a^{(2)}=(a^{(2)}_2,\ldots, a^{(2)}_{s+1})\in\Rset^s\setminus\{0\}$ such that
	  $a^{(2)}_2 f_{2}+\ldots+a^{(2)}_{s+1} f_{s+1}$ is bounded in $t$ for all $\tau\in\Tset^q$.
	  By rearranging the components of $f$ again among $f_2,\ldots,f_{s+1}$ we can assume that $a^{(2)}_2\neq 0$.
	  We repeat the argument $k-s+1$ times and we then define the matrix
	  \[
	  	L=\begin{pmatrix}
			a^{(1)}_1 &a^{(1)}_2& \ldots & a^{(1)}_{s}& 0 & 0 & \ldots\\
			0 &a^{(2)}_2& \ldots & a^{(2)}_{s}& a^{(2)}_{s+1} & 0 & \ldots\\
			0 &0& \ldots & \ldots& \ldots & \ldots & \ldots\\
			0 &0& \ldots & 0& a^{(k-s+1)}_{k-s+1} & \ldots & a^{(k-s+1)}_k\\
		\end{pmatrix}.
	  \]
	  Since $a^{(j)}_j\neq 0$ for $j=1,\ldots, k-s+1$, the matrix $L$ has rank $k-s+1$ and each component of $L\circ f$ is bounded in $t$ for all $\tau\in\Tset^q$. This proves our claim.

	We next prove Definition~\ref{dfn:unbdd_A}$\Rightarrow$Definition~\ref{dfn:unbdd_B}.
	Suppose that $f$ has $s$ unbounded directions  in the sense of Definition~\ref{dfn:unbdd_A}.
	Without loss of generality, rearranging the components of $f$, we can assume that $i_1=k-s+1, i_2=k-s+2,\ldots, i_s=k$ in Definition~\ref{dfn:unbdd_A}, that is, for any non-trivial $(a_1,\ldots,a_s)\in\Rset^s\setminus\{0\}$, the linear combination $a_1f_{k-s+1}+a_2 f_{k-s+2}+\ldots+a_s f_k$ is unbounded for some $\tau=\tau_a\in\Tset^q$.
	
	Assume that there exists a surjective linear map $L:\Rset^k\to\Rset^{k-s+1}$ given by $L(v)=Av\ (v\in\Rset^k)$
	such that for all $\tau\in\Tset^q$ the functions $L\circ f(\tau,\cdot)$ are bounded,
	 where $A$ is a $(k-s+1)\times k$-matrix with $\mathrm{rank} (A)=k-s+1$.
	 Then, by the elementary row operations, there exist an invertible $(k-s+1)\times (k-s+1)$-matrix $P$ such that $PA$ is in row canonical form.
	 Note that the bottom row $b=(b_1,\ldots, b_k)$ of $PA$ is not the zero vector and its leading entry is on the right of $(k-s)$-th column, i.e.,
	 there exists $m^*\in \{k-s+1,\ldots,k\}$ such that $b_j=0\ (j<m^*)$ and $b_{m^*}=1$.
	Moreover, we see that $PA(f(\tau,\cdot))$ is still bounded.
	 From the computation of the bottom component of $PA(f(\tau,\cdot))$,
	 we deduce that
	 \[
	 	f_{m^*}+\sum_{j>m^*}^{l} b_j f_j
	 \]
	 is bounded for all $\tau\in\Tset^q$, which contradicts to the fact that for any non-trivial $(a_1,\ldots,a_s)\in\Rset^s\setminus\{0\}$, the linear combination $a_1f_{k-s+1}+a_2 f_{k-s+2}+\ldots+a_s f_k$ is unbounded for some $\tau=\tau_a\in\Tset^q$.
	  \end{proof}

	  For the case of families of maps, we can obtain a similar correspondence to the above discussion.
\begin{dfn}\label{dfn:unbdd_B_family}
	Let $k\in\Nset$ and $s\in\{1,\ldots,k\}$. We say a (possibly uncountable) family $\{f_\lambda\}_{\lambda\in\Lambda}$ of maps $f_\lambda:\Tset^q\times\Rset\to\Rset^{k}$ has $s$ unbounded directions  in another sense
	if $s$ is the largest positive integer with the following property:
	for any surjective linear map $L:\Rset^k\to\Rset^{k-s+1}$ there exists$(\lambda,\tau)=(\lambda_L,\tau_L)\in\Lambda\times\Tset^q$ such that $t \mapsto L\circ f_{\lambda_L}(\tau_L,t)\ (t\in\Rset)$ is unbounded in $t$.
\end{dfn}
The proof of Proposition~\ref{prop:equivalence_unbdd} easily extends to the case of families of maps and we obtain the following equivalence of Definitions~\ref{dfn:unbdd_A_family} and ~\ref{dfn:unbdd_B_family}.
\begin{prop}\label{prop:equivalence_unbdd_family}
	Let $k\in\Nset$ and $s\in\{1,\ldots,k\}$.
	Let $\{f_\lambda\}_{\lambda\in\Lambda}$ be a family of maps $f_\lambda:\Tset^q\times\Rset\to\Rset^{k}$.
	Then $\{f_\lambda\}_{\lambda\in\Lambda}$ has $s$ unbounded directions  in the sense of Definition~\ref{dfn:unbdd_A_family}
	if and only if  $\{f_\lambda\}_{\lambda\in\Lambda}$ has $s$ unbounded directions  in the sense of Definition~\ref{dfn:unbdd_B_family}.
\end{prop}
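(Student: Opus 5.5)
The cleanest route is to reduce both definitions to a single linear-algebra statement about a vector space of functions. Let $V$ denote the real vector space of coefficient vectors $a\in\Rset^k$ such that $\sum_j a_j f_{j,\lambda}(\tau,t)$ is bounded in $t$ for every $(\lambda,\tau)\in\Lambda\times\Tset^q$. The union over $(\lambda,\tau)$ changes nothing here, since boundedness "for all $(\lambda,\tau)$" is an intersection of linear conditions, so $V$ is a linear subspace of $\Rset^k$. I will then show that both definitions say exactly that $s=k-\dim V$. This mirrors the proof of Proposition~\ref{prop:equivalence_unbdd}, with the single index $\tau$ replaced throughout by the pair $(\lambda,\tau)$.

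\emph{Definition~\ref{dfn:unbdd_A_family}.} The property for a choice of indices $i_1,\ldots,i_s$ says that $V\cap E=\{0\}$, where $E$ is the coordinate subspace spanned by $e_{i_1},\ldots,e_{i_s}$. Such an $E$ with $\dim E=s$ exists if and only if $s\le k-\dim V$. Indeed, choose the complementary coordinates to a set of pivot columns of a basis matrix of $V$; a coordinate subspace of dimension $k-\dim V$ meeting $V$ trivially then exists by row reduction. Hence the largest such $s$ equals $k-\dim V$.

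\emph{Definition~\ref{dfn:unbdd_B_family}.} A surjective $L:\Rset^k\to\Rset^{k-s+1}$ with $L\circ f_\lambda(\tau,\cdot)$ bounded for all $(\lambda,\tau)$ is the same thing as $k-s+1$ linearly independent rows lying in $V$. So such an $L$ exists if and only if $\dim V\ge k-s+1$. The property "every surjective $L$ gives unboundedness for some $(\lambda_L,\tau_L)$" therefore holds if and only if $s\ge k-\dim V+1$ fails to be needed, that is, if and only if $\dim V\le k-s$. The largest such $s$ is again $k-\dim V$. Comparing the two computations gives the equivalence, including the matching of the "largest $s$" clauses.

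The only step needing care is the coordinate-subspace claim in the analysis of Definition~\ref{dfn:unbdd_A_family}, where the components must be selected from the given ones rather than arbitrary combinations. I will handle it by putting a basis matrix of $V$ into reduced row echelon form. I take $E$ to be spanned by the non-pivot coordinates. A nonzero vector of $V$ has a nonzero pivot entry, so it cannot lie in $E$. Conversely, any $E$ of dimension greater than $k-\dim V$ meets $V$ nontrivially by a dimension count. Alternatively, I could simply repeat the explicit row-reduction construction of Proposition~\ref{prop:equivalence_unbdd} verbatim with $(\lambda,\tau)$ in place of $\tau$.
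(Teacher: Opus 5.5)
Your proof is correct, and it takes a different route from the paper. The paper only says that the proof of Proposition~\ref{prop:equivalence_unbdd} carries over with $\tau$ replaced by $(\lambda,\tau)$. That proof fixes $s$ and shows the two properties are equivalent by two separate hands-on arguments. For Definition~\ref{dfn:unbdd_B_family} $\Rightarrow$ Definition~\ref{dfn:unbdd_A_family} it argues by contraposition: it applies the failure of the first property to the successive windows of components $f_1,\ldots,f_s$, then $f_2,\ldots,f_{s+1}$, and so on. After a rearrangement at each step, this assembles a staircase matrix $L$ of rank $k-s+1$ with $L\circ f$ bounded. For the converse it puts a bounded surjective $L$ in row canonical form and notes that its last row is supported on the last $s$ coordinates, which contradicts the choice of unbounded components.

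You instead introduce the subspace $V$ of coefficient vectors giving bounded combinations for every $(\lambda,\tau)$, and show that the maximal $s$ in both definitions equals $k-\dim V$. The linear algebra underneath is the same: a dimension count plus a pivot argument. Your version gathers it in one place and makes the ``largest $s$'' clause automatic. It also gives an intrinsic formula for the number of unbounded directions, namely the rank of the components modulo bounded functions, in line with the $\hat g$ reformulation after Definition~\ref{dfn:unbdd_A}. And it makes the passage from a single map to a family trivial, since $\Lambda$ enters only through the definition of $V$. The paper's argument avoids naming $V$, but its repeated rearrangements need more care.

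Two small points. First, the sentence ``holds if and only if $s\ge k-\dim V+1$ fails to be needed'' is garbled. It should say that the property holds if and only if no bounded surjective $L$ exists, i.e.\ if and only if $\dim V\le k-s$. Second, you might add a line noting that when $V=\Rset^k$ neither definition holds for any $s\ge 1$, which is consistent with your formula.
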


\subsection{Proof of Theorem~\ref{thm:main1}}\label{subsec:pf-main1}

We first prove Theorem~\ref{thm:main1} under (A1), (A2) and (A3).
Fix $s\in\{1,\ldots,n-q\}$ and suppose that $\mathscr{Q}_s$ is a key set for $C^\omega(U)$.

Assume that the perturbed system $X_\varepsilon$ has $n-q-s+1$ analytic first integrals such that
they are functionally independent for $|\varepsilon|\neq0$ sufficiently small but not at $\varepsilon=0$
and depend analytically on $\varepsilon$ near $\varepsilon=0$.
By the analogous argument of Ziglin's Lemma (Proposition 3.6 in \cite{MY23}),
there exist $n-q-s+1$ analytic first integrals $G^\varepsilon_1,\ldots,G^\varepsilon_{n-q-s+1}$ which are functionally independent and depend analytically on $\varepsilon$ near $\varepsilon=0$.
This implies $dG^0_1, \ldots, dG^0_{n-q-s+1}$ are linearly independent almost everywhere.
Moreover, by (A3) and Lemma 3.4 in \cite{MY23}, we see that $G^0:=(G^0_1,\ldots,G^0_{n-q-s+1})$ is of the form
\[
G^0=\psi\circ F
\]
for some analytic map $\psi:F(\mathcal{U})\to\Rset^{n-q-s+1}$.
Note that $\mathrm{rank} \ d\psi=n-q-s+1$ almost everywhere since $dG^0_1, \ldots, dG^0_{n-q-s+1}$ are linearly independent almost everywhere.

We now detect non-persistence of first integrals in the following way.
Since $G_j^{\epsilon}$ is a real-analytic function depending on $\epsilon$ analytically,
it holds that $\frac{d}{d\epsilon}G_j^\epsilon$ is also a real-analytic function depending on
$\epsilon$ analytically.
Taking any $\epsilon\in[-\epsilon_0/2,\epsilon/2]$ and $c\in U$, we see that
$\frac{d}{d\varepsilon}\Big|_{\varepsilon=0}\left(G_j^\varepsilon(\gamma_\tau^{c}(T))-G_j^\varepsilon(\gamma_\tau^{c}(0))\right)$
is bounded in $(\tau,T)$ for $j=1,\ldots,n-q-s+1$.
Here $\gamma_\tau^{c}$ is an integral curve of the unperturbed system and it does not depend on the perturbation parameter $\epsilon$.

	Letting $G^\varepsilon=G^0+\varepsilon \tilde{G}^\varepsilon$, we have
	\begin{align*}
		dG^\varepsilon(X^0)&=-\varepsilon dG^\varepsilon (X^1+\varepsilon  X^2)\\
		&=-\varepsilon dG^0 (X^1)-\varepsilon^2\{ dG^\varepsilon(X^2)+d\tilde{G}^\varepsilon(X^1)\}
	\end{align*}
	since each component of $G^\varepsilon$ is a first integral of $X_\varepsilon$.
	Therefore, we compute
	\begin{align*}
		\frac{d}{d\varepsilon}&\Big|_{\varepsilon=0}\left(G_j^\varepsilon(\gamma_\tau^{c}(t))-G_j^\varepsilon(\gamma_\tau^{c}(0))\right)
		=\frac{d}{d\varepsilon}\Big|_{\varepsilon=0}\left(\int_0^T dG_j^\varepsilon(X^0)_{\gamma_\tau^{c}(t)}dt\right)\\
		&=\frac{d}{d\varepsilon}\Big|_{\varepsilon=0} \left( -\varepsilon\int_0^T dG_j^0(X^1)_{\gamma_\tau^{c}(t)}dt
		-\varepsilon^2\int_0^T \{dG_j^\varepsilon(X^2)+d\tilde{G}_j^\varepsilon(X^1)\}_{\gamma_\tau^{c}(t)}dt\right)\\
		&=-\int_0^T dG_j^0(X^1)_{\gamma_\tau^{c}(t)}dt
	\end{align*}
	Thus we deduce that
	\begin{align*}
	\mathscr{A}_{G_j^0}^{c}(\tau,T)=\int_0^T dG_j^0(X^1)_{\gamma_\tau^{c}(t)}dt
	\end{align*}
	is bounded in $(\tau,T)$ for each $j=1,\ldots,n-q-s+1$ and ${c}\in U$.

	Since $F_1,\ldots,F_{n-q}$ are common first integrals of $Y_1,\ldots,Y_q$,
	it follows that $F(\gamma_\tau^c(t))=F(\gamma_0^c(0))$.
	By using $G=\psi \circ F$, we obtain
	\begin{align*}
		\mathscr{A}_{G^0}^{c}(\tau,T)&=\int_0^T dG^0(X^1)_{\gamma_\tau^{c}(t)}dt
		=\int_0^T d\psi_{F(\gamma_\tau^{c}(t))}dF(X^1)_{\gamma_\tau^{c}(t)}dt\\
		&=d\psi_{F(\gamma_0^{c}(0))}\int_0^T dF(X^1)_{\gamma_\tau^{c}(t)}dt
		=d\psi_{F(\gamma_0^{c}(0))} \mathscr{A}_{F}^c(\tau,T).
	\end{align*}
	Thus $d\psi_{F(\gamma_0^{c}(0))} \mathscr{A}_{F}^{c}(\tau,T)$ is bounded on $(\tau,T)$ for each ${c}\in U$.

On the other hand, for $c\in \mathscr{Q}_s$, we see that the rank of $d\psi_{F(\gamma_0^{c}(0))}$ is less than $n-q-s+1$ since if not $d\psi_{F(\gamma_0^{c}(0))}$ is surjective and the boundedness of $d\psi_{F(\gamma_0^{c}(0))} \mathscr{A}_{F}^{c}(\tau,T)$ contradicts to the property of $\mathscr{Q}_s$.
Here, we use the equivalence of Definitions~\ref{dfn:unbdd_A} and ~\ref{dfn:unbdd_B} by Proposition~\ref{prop:equivalence_unbdd}.
	Writing $G^0=G^0(I)=(G_1^0(I),\ldots,G_{n-q-s+1}^0(I))$ in the action-angle coordinates, we see that all the $(n-q-s+1)$-th order minor determinants of $dG^0(I)$ vanish at each $c\in \mathscr{Q}_s$.
	However, by the assumption of Theorem~\ref{thm:main1}, $\mathscr{Q}_s$ in $U$ is a key set for $C^\omega(U)$,
	which implies that all the $(n-q-s+1)$-th order minor determinants of $dG^0(I)$ vanish not only on $\mathscr{Q}_s$
	but also on the whole $U$.
	This contradicts the fact that $dG_1^0,\ldots,dG_{n-q-s+1}^0$ are linearly independent almost everywhere, which completes the proof.
	
\subsection{Proof of Theorem~\ref{thm:main2}}

	We turn to the proof of Theorem~\ref{thm:main2} under the assumptions (A1), (A2), (A3) and (A4).
	In our proof, by using the cotangent lift trick presented in \cite{AZ10, MY21},
	we reduce the problem on the existence of commutative vector fields to that of first integrals
	and then we discuss non-persistence of commutative vector fields as in a similar way to Section~\ref{subsec:pf-main1}.
	
	Fix $s'\in\{1,\ldots,q-1\}$ and suppose that $\mathscr{Q}'_{s'}$ is a key set for $C^\omega(U)$.
	Assume that the perturbed system $X_\varepsilon$ has $q-s'+1$ analytic commutative vector fields $Z_1^\varepsilon, \ldots, Z_{q-s'+1}^\varepsilon$
	which are linearly independent almost everywhere and depend analytically on $\varepsilon$ near $\varepsilon=0$.
	Moreover, by Lemma 3.11 in \cite{MY23} and the assumptions (A3) and (A4), we can assume that $Z_j^0\ (j=1,\ldots,q-s'+1)$ have the form
	\begin{align*}
		Z_j^0(x)=\sum_{k=1}^q \rho_{j,k}(F(x))Y_k(x), 
	\end{align*}
	where $ \rho_{j,k}:\Rset^{n-q}\to \Rset,\ k=1,\ldots q$ are analytic.
	Note that $\mathrm{rank} \ (\rho_{j,k})=q-s'+1$ almost everywhere since $Z^0_1, \ldots, Z^0_{q-s'+1}$ and $Y_1,\ldots,Y_q$ are linearly independent almost everywhere.
	
	Now we consider non-persistence of commutative vector fields as well as that of first integrals as in Section~\ref{subsec:pf-main1}.
	By using the cotangent lift trick as in Section~\ref{sec:microdiffusion}, we see that the Hamiltonian $h_{Z^\varepsilon_j}\ (j=1,\ldots,q-s'+1)$ of the cotangent lift of the commutative vector field $Z_j^\epsilon$ is a first integral of the cotangent lift $\hat{X}_\varepsilon$ of $X_\varepsilon$.
	Moreover, fixing ${c}\in U$, we see that $\hat{\gamma}_{\tau,F_i}^{c}(t):=(\gamma_\tau^{c}(t),dF_i(\gamma_\tau^{c}(t)))$ is an integral curve of $\hat{X}_\varepsilon$.
	We replace $G_j^\varepsilon, {\gamma}_{\tau}^{c}(t)$ and $X_\varepsilon$ in Section~\ref{subsec:pf-main1} with
	$h_{Z^\varepsilon_j}, \hat{\gamma}_{\tau,F_i}^{c}(t)$ and $\hat{X}_\varepsilon$ respectively,
	and apply the same argument. Then we can deduce that
	\[
	\int_0^T dh_{Z^0_j}(\hat{X}^1)_{\hat{\gamma}_{\tau,F_i}^{c}(t)}dt
	\]
	is bounded in $(\tau,T)$ for each $c\in U$ and $i=1,\ldots,n-q,\ j=1,\ldots,q-s'+1$,
	where $\hat{X}^1$ is the leading perturbation term of $\hat{X}_\varepsilon=\hat{X}+\varepsilon \hat{X}^1+\varepsilon^2 \hat{X}^2$.
	
	Computing $dh_{Z_j}(\hat{X}^1)_{\hat{\gamma}_{\tau,F_i}^{c}(t)}$, we obtain
	\begin{align*}
	dh_{Z^0_j}(\hat{X}^1)_{\hat{\gamma}_{\tau,F_i}^{c}(t)}
	&=X^1(Z_j^0(F_j))_{\gamma_\tau^{c}(t)}-Z_j^0(X^1(F_j))_{\gamma_\tau^{c}(t)}\\
	&=-Z_j^0(X^1(F_j))_{\gamma_\tau^{c}(t)}
	=-\sum_{k=1}^q (\rho_{j,k}\circ F)Y_k(X^1(F_j))_{\gamma_\tau^{c}(t)}
	\end{align*}
	Since the $q$-parameter family of quasi-periodic orbits $\{\gamma^{c}_\tau(t)\}_{\tau\in\Tset^q}$ is generated by the commutative vector fields $Y_1,\ldots,Y_q$ corresponding to \eqref{sys0}, 
    we can assume that $\frac{\partial}{\partial \tau_k}\gamma_\tau^{c}(t)=Y_k(\gamma_\tau^{c}(t))$ (see Remark \ref{rmk:Y}).
	Thus 
	\begin{align*}
		\frac{\partial}{\partial \tau_k}\mathscr{A}_{F_i}^{c}(\tau,T)
		=\int_0^T\frac{\partial}{\partial \tau_k} dF_i(X^1)_{\gamma_\tau^{c}(t)}dt\\
		= \int_0^T Y_k(X^1(F_i))_{\gamma_\tau^{c}(t)}dt.
	\end{align*}
	Therefore, using $\rho_{j,k}\circ F(\gamma_\tau^{c}(t))=\rho_{j,k}\circ F(\gamma_0^{c}(0))$,
	it follows that
	\begin{align*}
		\int_0^T dh_{Z_j^0}(\hat{X}^1)_{\hat{\gamma}_{\tau,F_i}^{c}}dt
		&=-\sum_{k=1}^q  \int_0^T 
		\left((\rho_{j,k}\circ F)Y_k(X^1(F_i))\right)_{\gamma_\tau^{c}(t)}dt\\
		&=-\sum_{k=1}^q (\rho_{j,k}\circ F(\gamma_0^{c}(0)) \left( \int_0^T  Y_k(X^1(F_i))_{\gamma_\tau^{c}(t)}dt\right)\\
		&=-\sum_{k=1}^q (\rho_{j,k}\circ F(\gamma_0^{c}(0)) \frac{\partial \mathscr{A}_{F_i}^{c}}{\partial \tau_k}(\tau,T),
	\end{align*}
	for $i=1,\ldots n-q,\ j=1,\ldots,q-s'+1$.
	This implies that
	\[
	d_\tau \mathscr{A}_{F_i}^c P
	\]
	is bounded in $(\tau,T)$ for each $c\in U$ and $i=1,\ldots, n-q$, where $P$ is the transpose of the matrix $((\rho_{j,k}\circ F(\gamma_0^{c}(0)))$.

We now use the fact that $\{d_{\tau}\mathscr{A}_{F_i}^c\}_{i\in\{1,\ldots,n-q\}}$ has $s'$ unbounded directions for each $c\in \mathscr{Q}'_s$ as in the proof of Theorem~\ref{thm:main1}. Note that we also use Proposition~\ref{prop:equivalence_unbdd_family}.
	For $c\in \mathscr{Q}'_{s'}$, we see that the matrix $P$ cannot have the maximal rank by the definition of $\mathscr{Q}'_{s'}$. Writing $Z^0_j=Z^0_j(I)\ (j=1,\ldots,q-s'+1)$ in the action-angle coordinates, it holds that all the $(q-s'+1)$-th order minor determinants of $P$ vanish not only on $\mathscr{Q}'_{s'}$ but also on the whole $U$. 
	This yields a contradiction since $Z^0_1,\ldots,Z^0_{q-s'+1}$ are linearly independent almost everywhere.

\begin{rmk}\label{rmk:q_unbdd_direc}
The above proof implies that $\{d_\tau\mathscr{A}_{F_j}^c\}_{j=1,...,n-q}$ cannot have $q$ unbounded directions.
Actually, if we set $Z_\varepsilon=X_\varepsilon$ and $Y_1=X_0$, the above calculation shows that
$h_{X_\epsilon} (\hat{\gamma}^c_{\tau, F_j}(T))-h_{X_\epsilon} (\hat{\gamma}^c_{\tau, F_j}(0))$ is bounded in $T$ and its leading term is $-\frac{\partial}{\partial \tau_1} \mathscr{A}_{F_j}^c$.
Thus, the $q$-tuple $(a_1,a_2, ...,a_q)=(1,0,...,0)$ prevents $\{d_\tau\mathscr{A}_{F_j}^c\}_{j=1,...,n-q}$ from having $q$ unbounded directions.
Therefore, Theorem 3.8 makes sense only for the cases where $s'= 1, \ldots, q-1$.
\end{rmk}

\subsection{ Proofs of Propositions~\ref{cor:microdiffusion} and ~\ref{cor:microdiffusion2}}
 We only show Proposition~\ref{cor:microdiffusion} since Proposition~\ref{cor:microdiffusion2} can be shown in a similar way. 
Assume that  condition (i) in  Proposition~\ref{cor:microdiffusion} holds.

First, we prove  (i.a).
We compute
\begin{equation}
\begin{split}
F_j(x_\varepsilon (T))-F_j(x_\varepsilon (0))=&\int_0^T \frac{d}{dt}(F_j(x_\varepsilon (t)))\,dt =\int_0^T dF_j(X_\varepsilon (x_\varepsilon (t)))\, dt\\
=&\int_0^T dF_j\left(X^0(x_\varepsilon (t))+\varepsilon  X^1(x_\varepsilon (t))+\varepsilon ^2  X^2  (x_\varepsilon (t),\varepsilon ) \right)\, dt.
\end{split}
\label{eqn:F_expansion_1}
\end{equation}


Since $X_\varepsilon$ is analytic in $\varepsilon$, after possibly decreasing $\varepsilon_0$, we may assume that it admits an analytic extension to a complex neighborhood of $(0,\varepsilon_0)$, uniformly for $x\in K$. Since $K$ is compact, it follows that   $X^0$ and $X^1$, as well as the remainder $X^2$ 
are uniformly bounded for $x\in K$ and $\varepsilon\in (0,\varepsilon_0)$.

From \eqref{eqn:F_expansion_1}, since  $X^0(F_j)=0$ , it follows that
\begin{equation}
\begin{split}
F_j(x_\varepsilon (T))-F_j(x_\varepsilon (0))=&\varepsilon \int_0^T dF_j(X^1(x_\varepsilon (t)))\, dt+O(\varepsilon ^2) .
\end{split}
\label{eqn:F_expansion_2}
\end{equation}

Now we apply the Gronwall inequality, obtaining
\[\|x_\varepsilon (t)-\gamma^c_\tau(t)\|<\varepsilon  M_0e^{M_1 t},\]
for some $M_0,M_1>0$ that depend on $X^0$ and $X^1$. 
Then 
\begin{equation}
\begin{split} 
\left \vert \int_0^T dF_j(X^1(x_\varepsilon (t)))\,dt 
- \int_0^{T} dF_j(X^1(\gamma_\tau^c(t)))\, dt
\right \vert \le&  \varepsilon  M_2 \int_0^T M_0e^{M_1 t} \,dt\\  <&  \varepsilon  M_2\frac{M_0}{M_1} e^{M_1T},
\end{split}\label{eqn:Gronwall}
\end{equation}
where $M_2>0$ is a constant depending on $dF_jX^1$.
 
From \eqref{eqn:Gronwall} we obtain
\begin{equation}
\begin{split}
|F_j(x_\varepsilon (T))-F_j(x_\varepsilon (0))|\ge &\varepsilon \left[\left\vert \int_0^{T} dF_j(X^1(\gamma_\tau^c(t)))\, dt\right\vert -\varepsilon M_2\frac{M_0}{M_1}e^{M_1T} \right]+O(\varepsilon ^{2})\\
= &\varepsilon \left[\left\vert \mathscr{A}_{F_j}^c(\tau,T)\right\vert -\varepsilon M_2\frac{M_0}{M_1}e^{M_1T}\right] +O(\varepsilon ^2).
\end{split}
\label{eqn:F_expansion_3}
\end{equation}

By assumption,  $\mathscr{A}_{F_j}^c(\tau,T)$ is unbounded in $T$.
This implies that for any $C>0$ (independent of $\varepsilon$), there exists $T$ (arbitrarily large) such that  $|\mathscr{A}_{F_j}^c(\tau,T)|>2C$.
Choose  $0<k<\frac{1}{M_1}$ (for instance, $k=\frac{1}{2M_1}$), and denote $\rho=1-M_1k$.
Then, for arbitrarily large $T$  there exists  arbitrarily small (in absolute value) $\varepsilon \neq 0$   such that 
\begin{itemize} \item $T<k\log(1/\varepsilon )$, and
\item $|-M_2\frac{M_0}{M_1}\varepsilon ^{1+\rho}+O(\varepsilon^2)| <C\varepsilon$,  
\end{itemize}
in which case we obtain
\begin{equation*}
\begin{split}
|F_j(x_\varepsilon (T))-F_j(x_\varepsilon (0))|\ge &\varepsilon \left\vert \mathscr{A}_{F_j}^c(\tau,T)\right\vert -
M_2\frac{M_0}{M_1}\varepsilon ^{1+\rho}  +O(\varepsilon ^2)\\
> & \varepsilon C. 
\end{split}
\label{eqn:F_expansion_4}
\end{equation*}


Now we prove Proposition~\ref{cor:microdiffusion} (i.b).
As noted in  Remark \ref{rmk:Y} we  can assume that $Y_k(\gamma_\tau^{c}(t))=\frac{\partial}{\partial \tau_k}\gamma_\tau^{c}(t)$.
By a direct calculation, we have
\begin{align*}
		\frac{\partial}{\partial \tau_k}\mathscr{A}_{F_j}^{c}(\tau,T)
		=\int_0^T\frac{\partial}{\partial \tau_k} dF_j(X^1)_{\gamma_\tau^{c}(t)}dt\\
		= \int_0^T Y_k(X^1(F_j))_{\gamma_\tau^{c}(t)}dt.
	\end{align*}
Since $Y_k=\frac{\partial}{\partial \theta_k}$ and $\gamma^c_\tau(t)=(c, \tau+\omega(c) t)$, we have $\frac{\partial}{\partial \tau_k}=\frac{\partial}{\partial \theta_k}$.
\[
\begin{aligned}
\frac{d}{dt}
X^1(F_j)(\gamma_\tau^c(t))
&=
\sum_{k=1}^q
\omega_k(c)
\frac{\partial X^1(F_j)}{\partial\theta_k}
(\gamma_\tau^c(t))\\
&=
\sum_{k=1}^q
\omega_k(c)
Y_k\big(X^1(F_j)\big)(\gamma_\tau^c(t)).
\end{aligned}
\]

Therefore, by the Fundamental Theorem of Calculus,

\[
\begin{aligned}
&\sum_{k=1}^q\omega_k(c) \int_0^T
Y_k\big(X^1(F_j)\big)(\gamma_\tau^c(t))\,dt\\
&\qquad= X^1(F_j)(\gamma_\tau^c(T)) -X^1(F_j)(\gamma_\tau^c(0)).
\end{aligned}
\]

Let $x_\varepsilon(t)$ be a solution of \eqref{sys}. 
We have
\[
\frac{d}{dt}F_j(x_\epsilon(t))
=
X_\epsilon(F_j)(x_\epsilon(t)),
\]
therefore
\[
\begin{aligned}
\frac{d}{dt}F_j(x_\epsilon(t))- \frac{d}{dt}F_j(x_\epsilon(0))
&=\epsilon\left[ X^1(F_j)(x_\epsilon(t))-X^1(F_j)(x_\epsilon(0))
\right]\\
&\quad+ \epsilon^2\left[
X^2(F_j)(x_\epsilon(t),\epsilon)-
X^2(F_j)(x_\epsilon(0),\epsilon)\right].
\end{aligned}
\]
Using the Gronwall inequality for $T=k\ln (1/\varepsilon )$ as before we obtain 
\[
\begin{aligned}
\frac{d}{dt}F_j(x_\epsilon(T))- \frac{d}{dt}F_j(x_\epsilon(0))
&=\epsilon\left[\sum_{k=1}^q\omega_k(c) \int_0^T
Y_k\big(X^1(F_j)\big)(\gamma_\tau^c(t))\,dt 
\right] +O(\varepsilon^{1+\rho}) \\
&=\epsilon\left[\sum_{k=1}^q\omega_k(c) \frac{\partial}{\partial \tau_k} \mathscr{A}_{F_j}^c(\tau,T)
\right] +O(\varepsilon^{1+\rho}).
\end{aligned}
\] 
By the assumption that   $d_\tau\mathscr{A}_{F_j}^c(\tau,T)$ has 
$q-1$ unbounded directions,  for  every non-trivial linear combination of  $ \frac{\partial}{\partial \tau_2} \mathscr{A}_{F_j}^c,\ldots, \frac{\partial}{\partial \tau_q} \mathscr{A}_{F_j}^c$ there exists $\tau$ such that the respective linear combination  is unbounded. Since $\frac{\partial}{\partial \tau_1} \mathscr{A}_{F_j}^c$ is bounded (as shown in Remark \ref{rmk:q_unbdd_direc}),
 $\frac{\partial}{\partial \tau_1} \mathscr{A}_{F_j}^c$ together with any other $q-2$ components of $d_\tau\mathscr{A}_{F_j}^c(\tau,T)$ make a bounded linear combination
with $a=(1,0,\ldots, 0)$.
Therefore, it follows that there exists $\tau$ such that 
for any $C>0$, there exists  $T$ sufficiently large such that  
\[\left\vert \sum_{k=2}^q\omega_k(c) \frac{\partial}{\partial \tau_k} \mathscr{A}_{F_j}^c(\tau,T) \right\vert>3C.\]
Choosing $C$ sufficiently large it also follows that
\[\left\vert \sum_{k=1}^q\omega_k(c) \frac{\partial}{\partial \tau_k} \mathscr{A}_{F_j}^c(\tau,T) \right\vert>2C.\]
For any such  $T$ (arbitrarily large) there exists  $\varepsilon  >  0$ arbitrarily small   such that $T<k\log(1/\varepsilon )$ and the error terms are sufficiently small so that
 \begin{equation}
\begin{split}
\left\vert \frac{d}{dt}F_j(x_\epsilon(T))- \frac{d}{dt}F_j(x_\epsilon(0)) \right\vert \ge \varepsilon  \left\vert \sum_{k=1}^q\omega_k(c) \frac{\partial}{\partial \tau_k} \mathscr{A}_{F_j}^c(\tau,T) \right\vert>\varepsilon  C.
\end{split}
\label{eqn:F_expanion_5}
\end{equation}

Now we prove (ii). The statement in (ii) is the negation of the statement in (i).  
Since $K$ is a compact neighborhood of $F^{-1}(c)$, we see that  $d(\gamma^c_\tau(0),\partial K)>0$ is a quantity of $O(1)$.
Then for every $\varepsilon^*>0$  there exists $0<\varepsilon<\varepsilon^*$ such that the solution $x_\varepsilon (t)$ leaves $K$ at some point in time $T>0$. Thus,  there is some $j$ such that  $F_j(x_\varepsilon (t))$ changes by $O(1)$ during $[0,T]$.

\subsection{A sufficient condition for maps with $s$ unbounded directions}\label{sec:sufficient_condition}
In this section we describe a connection between our integrals and the previous one in \cite{MY23}.
We introduce a subset $D_\mathrm{R}\subset U$ given by
\[
	D_\mathrm{R}=\{c\in U; \dim_\Qset  \mathrm{span}_\Qset \{ \omega_1(c), \ldots, \omega_q(c) \} =1\}.
\]
Then for each $c\in D_\mathrm{R}$ there exists a positive constant $\omega_0(c)$ depending on $c\in D_\mathrm{R}$ such that $\omega(c)/\omega_0(c)\in\Zset^q\setminus\{0\}$ and $\omega(c)/(m\omega_0(c))\notin\Zset^q\setminus\{0\}$ for all integer $m\ge 2$. Moreover, for each $c\in D_\mathrm{R}$, the unperturbed system has a $q$-parameter family of periodic orbits
\begin{equation}
(I(t), \theta(t))=(c, \omega(I)t+\tau),\quad
\tau\in\Tset^q,
\label{eqn:po}
\end{equation}
 for $\tau\in\Tset^q$ with the period $T^c=2\pi/\omega_0(c)$.
Note that the periodic orbits given by \eqref{eqn:po} are parameterized
 by $(q-1)$ parameters essentially 
 since two periodic orbits $(c,\omega(c)t+\tau)$ and $(c,\omega(c)t+\tau_0)$
 represent the same orbit if $\tau-\tau_0=\omega(c)t_0$ for some $t_0\in\Rset$.
We also have a $q$-parameter (but essentially $(q-1)$-parameter) family of periodic orbits
\[
\gamma_\tau^c(t)=\varphi(c, \omega(c)t+\tau),\quad
 (c,\tau)\in D_\R\times\Tset^q,
\]
with the period $T^I$ in the unperturbed system.
Define the integrals
\begin{align}\label{eqn:i}
\I_{F_k}^c(\tau):=\int_0^{T^c} dF_j(X^1)_{\gamma_\tau^c (t)} dt,\quad
 k=1,\ldots,n-q,
\end{align}
for $c\in D_\R$ and set $\I_F^c(\tau):=(\I_{F_1}^c(\tau),\ldots,\I_{F_{n-q}}^c(\tau))$.
Note that
\[
\I_{F_k}^c(\tau+\omega(c)t)=\I_{F_k}^c(\tau)
\]
for $\tau\in\Tset^q$ and $t\in\Rset$.

\begin{prop}\label{prop:sufficient_unbounded}
	Fix $c^*\in D_\mathrm{R}$ and let $s_{c^*}=\dim_{\Rset}(\mathrm{span}_\Rset \mathscr{I}_F^{c^*}(\Tset^q))$.
	Then $\mathscr{A}_F^{c^*}$ has at least $s_{c^*}$ unbounded directions.
\end{prop}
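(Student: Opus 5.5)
The key observation is that for $c^*\in D_\R$ the orbit $\gamma_\tau^{c^*}$ is periodic with period $T^{c^*}$, so the Melnikov vector grows linearly in $T$ with slope given by the subharmonic integral $\I_F^{c^*}(\tau)$. Precisely, writing $T=NT^{c^*}+r$ with $N\in\Zset_{\ge0}$ and $r\in[0,T^{c^*})$, periodicity of the integrand gives
\[
\mathscr{A}_F^{c^*}(\tau,T)=N\,\I_F^{c^*}(\tau)+\mathscr{A}_F^{c^*}(\tau,r).
\]
The remainder term is bounded uniformly in $(\tau,T)$, since it is an integral over a bounded interval of a continuous function on the compact set $F^{-1}(c^*)$. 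Hence, for any vector $a\in\Rset^{n-q}$ and any $\tau$, the function $t\mapsto a\cdot\mathscr{A}_F^{c^*}(\tau,t)$ is bounded if and only if $a\cdot\I_F^{c^*}(\tau)=0$.

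Next I choose the components. Since $\mathrm{span}_\Rset\I_F^{c^*}(\Tset^q)$ has dimension $s:=s_{c^*}$, pick $\tau^{(1)},\dots,\tau^{(s)}$ with $v_l:=\I_F^{c^*}(\tau^{(l)})$ linearly independent. The $(n-q)\times s$ matrix $(v_1\,\cdots\,v_s)$ has rank $s$, so it has an invertible $s\times s$ minor. Let its rows be indexed by $i_1,\dots,i_s$.

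Now suppose $\sum_j a_j\mathscr{A}_{F_{i_j}}^{c^*}(\tau,\cdot)$ is bounded for every $\tau$. Set $\tilde a\in\Rset^{n-q}$ with entries $a_j$ in positions $i_j$ and zeros elsewhere. The observation above gives $\tilde a\cdot v_l=0$ for $l=1,\dots,s$. This says that the chosen invertible minor annihilates $a$, so $a=0$. Therefore the components $i_1,\dots,i_s$ witness the property in Definition~\ref{dfn:unbdd_A}. Since $s$ unbounded directions is defined as the largest such integer, the number of unbounded directions of $\mathscr{A}_F^{c^*}$ is at least $s_{c^*}$.

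The only point needing care is the boundedness of the remainder. The integrand $dF(X^1)$ is continuous near the compact torus $F^{-1}(c^*)$, which contains every orbit $\gamma_\tau^{c^*}$, so $|\mathscr{A}_F^{c^*}(\tau,r)|\le T^{c^*}\sup_{F^{-1}(c^*)}|dF(X^1)|$. The degenerate case $s_{c^*}=0$ is trivial, and I expect no real obstacle.
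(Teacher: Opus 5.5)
Your proof is correct. It rests on the same identity as the paper's, $\mathscr{A}_F^{c^*}(\tau,kT^{c^*})=k\,\I_F^{c^*}(\tau)$, but the linear-algebra step differs.

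The paper verifies the property in Definition~\ref{dfn:unbdd_B} and then invokes Proposition~\ref{prop:equivalence_unbdd} to return to Definition~\ref{dfn:unbdd_A}. For a surjective $L:\Rset^{n-q}\to\Rset^{n-q-s_{c^*}+1}$ one has $\dim\mathrm{Ker}(L)=s_{c^*}-1$. Hence some value $w_L=\I_F^{c^*}(\tau_L)$ lies outside $\mathrm{Ker}(L)$, and $\|L(\mathscr{A}_F^{c^*}(\tau_L,kT^{c^*}))\|=k\|L(w_L)\|\to\infty$. You instead check Definition~\ref{dfn:unbdd_A} directly, choosing the components $i_1,\dots,i_s$ from a nonvanishing $s\times s$ minor of $(v_1\,\cdots\,v_s)$.

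Your route is self-contained, since it does not need the equivalence of the two definitions. Because you also bound the remainder, it gives a two-sided criterion: $a\cdot\mathscr{A}_F^{c^*}(\tau,\cdot)$ is bounded if and only if $a\cdot\I_F^{c^*}(\tau)=0$. Writing $T=NT^{c^*}+r$ with $N\in\Zset$ covers $T<0$ as well. Applying this criterion to any $s'>s_{c^*}$ components produces a nonzero bounded combination, so the number of unbounded directions is in fact exactly $s_{c^*}$, not just at least $s_{c^*}$.

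The paper's route avoids choosing coordinates. It also needs only the identity along multiples of the period, with no remainder estimate.
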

\begin{proof}
	From Proposition~\ref{prop:equivalence_unbdd}, it suffices to show that $\mathscr{A}_F^{c^*}$ has  at least  $s_{c^*}$ unbounded directions
	 in the sense of Definition~\ref{dfn:unbdd_B}.
	 
	 It is easy to prove that for any surjective linear map $L:\Rset^{n-q}\to\Rset^{n-q-s_{c^*}+1}$ there exists $w_L\in \mathscr{I}_F^{c^*}(\Tset^q)$ such that $w_L\notin \mathrm{Ker}(L)$.
	 Actually, if not, there exists a surjective linear map $L:\Rset^{n-q}\to\Rset^{n-q-s_{c^*}+1}$ such that for all $w\in \mathscr{I}_F^{c^*}(\Tset^q)$ it holds that $w\in \mathrm{Ker}(L)$. Then we have $ \mathscr{I}_F^{c^*}(\Tset^q)\subset \mathrm{Ker}(L)$ and this implies that $\mathrm{span}_\Rset\mathscr{I}_F^{c^*}(\Tset^q)\subset \mathrm{Ker}(L)$, which contradicts $\dim_{\Rset}(\mathrm{span}_\Rset\mathscr{I}_F^{c^*}(\Tset^q)))=s_{c^*}> s_{c^*}-1=\dim\mathrm{Ker}(L)$.
	 
	Now fix a surjective linear map $L:\Rset^{n-q}\to\Rset^{n-q-s_{c^*}+1}$ and take $w_L\in \mathscr{I}_F^{c^*}(\Tset^q)$ with $w_L\notin \mathrm{Ker}(L)$, i.e., $||L(w_L)||> 0$.
	Then there exists $\tau_L\in \Tset^q$ such that $ \mathscr{I}_F^{c^*}(\tau_L)=w_L$.
	For $k\in\Nset$, we have
	\[
		||L(\mathscr{A}_F^{c^*}(\tau_{L},kT^{c^*}))||=||L(k\mathscr{I}_F^{c^*}(\tau_{L}))||=k||L(\mathscr{I}_F^{c^*}(\tau_{L}))||=k||L(w_L)||\to +\infty
	\]
	as $k\to+\infty$. This implies that $\mathscr{A}_F^{c^*}$ has at least $s_{c^*}$ unbounded directions.
\end{proof}

\begin{cor}
	Fix $c^*\in D_\mathrm{R}$ and let $s'_{c^*}=\max_{\tau'\in\Tset^q}\mathrm{rank}(d_\tau\I_{F}^{c^*}(\tau'))$.
	Then both of $\mathscr{A}_F^{c^*}$ and $\{d_\tau\mathscr{A}_{F_i}^{c^*}\}_{i=1,\ldots, {n-q}}$ have at least $s'_{c^*}$ unbounded directions. 
\end{cor}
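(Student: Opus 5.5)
The plan is to reduce both claims to Proposition~\ref{prop:sufficient_unbounded} and its proof, in the form of Definitions~\ref{dfn:unbdd_B} and \ref{dfn:unbdd_B_family}. The two parts are handled as follows.

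\emph{The map $\mathscr{A}_F^{c^*}$.} Fix $\tau'$ at which $d_\tau\I_F^{c^*}(\tau')$ attains rank $s'_{c^*}$. Since $\I_F^{c^*}$ is analytic, the constant-rank theorem applies near $\tau'$ (the rank is maximal, hence lower semicontinuity gives constant rank nearby). So the image $\I_F^{c^*}(\Tset^q)$ contains a piece of an $s'_{c^*}$-dimensional embedded submanifold. A submanifold of dimension $s'$ cannot lie in a linear subspace of dimension less than $s'$. Hence $\dim\mathrm{span}_\Rset \I_F^{c^*}(\Tset^q)\ge s'_{c^*}$, i.e.\ $s_{c^*}\ge s'_{c^*}$, and Proposition~\ref{prop:sufficient_unbounded} gives the first claim.

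\emph{The family $\{d_\tau\mathscr{A}_{F_i}^{c^*}\}_i$.} We argue as in the proof of Proposition~\ref{prop:sufficient_unbounded}. The key identity is
\[
d_\tau\mathscr{A}_{F_i}^{c^*}(\tau,kT^{c^*})=k\,d_\tau\I_{F_i}^{c^*}(\tau)\quad (k\in\Nset).
\]
It follows by differentiating $\mathscr{A}_{F_i}^{c^*}(\tau,kT^{c^*})=k\I_{F_i}^{c^*}(\tau)$ in $\tau$; that relation holds because $\gamma_\tau^{c^*}$ is $T^{c^*}$-periodic.

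Now let $L:\Rset^q\to\Rset^{q-s'_{c^*}+1}$ be surjective, so $\dim\mathrm{Ker}(L)=s'_{c^*}-1$. The rows $d_\tau\I_{F_i}^{c^*}(\tau')$, $i=1,\ldots,n-q$, span an $s'_{c^*}$-dimensional subspace of $\Rset^q$. Hence some row, say $w=d_\tau\I_{F_{i_L}}^{c^*}(\tau')$, satisfies $w\notin\mathrm{Ker}(L)$. Here we view $L$ acting on row vectors, matching how $P$ acts in the proof of Theorem~\ref{thm:main2}. Then
\[
\|L(d_\tau\mathscr{A}_{F_{i_L}}^{c^*}(\tau',kT^{c^*}))\|=k\|L(w)\|\to\infty .
\]
Thus the pair $(\lambda_L,\tau_L)=(i_L,\tau')$ witnesses unboundedness. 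This gives at least $s'_{c^*}$ unbounded directions in the sense of Definition~\ref{dfn:unbdd_B_family}, and Proposition~\ref{prop:equivalence_unbdd_family} converts this to Definition~\ref{dfn:unbdd_A_family}.

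The main subtlety is in the first part: passing from the rank of the derivative to the dimension of the span of the image. I would handle it directly, without the constant-rank theorem. Suppose the span $V$ had dimension less than $s'$, and choose a linear $\ell$ vanishing on $V$. Then $\ell\circ\I_F^{c^*}\equiv0$ for every such $\ell$, so every row of $d_\tau\I_F^{c^*}$ lies in $V$. This forces the rank to be at most $\dim V<s'$, a contradiction; in fact it shows the columns of $d_\tau\I_F^{c^*}(\tau')$ lie in $V$.

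A minor point is the convention of ``at least'' versus the ``largest'' $s$ in the definitions. This is harmless, since the property defining $s$ unbounded directions is monotone in $s$.
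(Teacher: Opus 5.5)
Your proof is correct and follows the paper's argument. Part one bounds $\dim\mathrm{span}_\Rset\I_F^{c^*}(\Tset^q)$ from below by $s'_{c^*}$ and then invokes Proposition~\ref{prop:sufficient_unbounded}; part two picks a row of $d_\tau\I_F^{c^*}(\tau')$ outside $\mathrm{Ker}(L)$ and uses $d_\tau\mathscr{A}_{F_i}^{c^*}(\tau,kT^{c^*})=k\,d_\tau\I_{F_i}^{c^*}(\tau)$. Your direct argument that the columns $\partial_{\tau_k}\I_F^{c^*}(\tau')$ lie in the span $V$ is a cleaner substitute for the paper's appeal to the rank theorem, but drop the slip about ``rows'', which live in $\Rset^q$ rather than in $V\subset\Rset^{n-q}$. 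You also correctly take $L:\Rset^q\to\Rset^{q-s'_{c^*}+1}$ in part two, whereas the paper's text writes $\Rset^{n-q}$.
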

\begin{proof}
	Since $\mathrm{rank}(d_\tau\I_{F}^c(\tau'))$ takes an integer values for each $\tau'\in\Tset^q$, there exists $\tau^*\in\Tset^q$ such that $\mathrm{rank}(d_\tau\I_{F}^{c^*}(\tau^*))=s'_{c^*}$.
	Then the rank theorem implies that the image of $\I_{F}^{c^*}$ contains an $s'_{c^*}$- dimensional open ball and thus we deduce that $\mathscr{A}_F^{c^*}$ has $s'_{c^*}$ unbounded directions from Proposition~\ref{prop:sufficient_unbounded}.
	Now we show that $\{d_\tau\mathscr{A}_{F_i}^{c^*}\}_{i=1,\ldots, {n-q}}$ also have $s'_{c^*}$ unbounded directions.
	From $s'_{c^*}=\mathrm{rank}(d_\tau\I_{F}^{c^*}(\tau^*))$,
	for any surjective linear map $L:\Rset^{n-q}\to\Rset^{n-q-s'_{c^*}+1}$,
	 there exists $w^*\in  \{d_\tau\mathscr{A}_{F_i}^{c^*}(\tau^*)\}_{i=1,\ldots,n-q}$ such that $w^*\notin \mathrm{Ker}(L)$, i.e., $||L(w^*)||> 0$ and a similar discussion as in the proof of Proposition~\ref{prop:sufficient_unbounded} provides the desired result.
	 Note that $d_\tau\I_{F_i}^{c^*}(\tau)=d_\tau\mathscr{A}_{F_i}^{c^*}(\tau,T^{c^*})$ and $d_\tau\mathscr{A}_{F_i}^{c^*}(\tau,kT^{c^*})=kd_\tau\mathscr{A}_{F_i}^{c^*}(\tau,T^{c^*})$ for every $\tau\in\Tset^q$ and $k\in\Nset$.
\end{proof}

\section{Connections with classical Melnikov methods}

In this section, we establish connections between the Melnikov functions defined in \eqref{eqn:melnikov_function} and \eqref{eqn:melnikov_vector} and the classical homoclinic and subharmonic Melnikov methods. We then apply the main results to  rotator-pendulum systems perturbed by quasi-periodic vector fields that are not necessarily Hamiltonian.

\label{sec:connections_melnikov}
\subsection{Periodically perturbed one-degree-of-freedom Hamiltonian systems}\label{sec:single_DOF}
In this subsection, we briefly review the standard Melnikov methods for subharmonic and homoclinic orbits in periodically perturbed single-degree-of-freedom Hamiltonian systems, and discuss relationships with our main results.
For this case, the present results recover the obstructions to integrability via subharmonic and homoclinic Melnikov functions already obtained in \cite{MY23},  and they do not provide additional information. However, since we are mainly interested in higher dimensional cases, it is worth describing the connection between our integral $\mathscr{A}^{c}_{F}$ defined in Section~\ref{sec:main} and the subharmonic/homoclinic Melnikov functions for this case.

Consider a periodically perturbed single-degree-of-freedom Hamiltonian system of the form
\begin{align}\label{mel}
\dot{x}=J\D H(x)+\varepsilon u(x,\nu t,\epsilon),\quad
x\in\Rset^2,
\end{align}
where $\epsilon$ is a small parameter as in the preceding sections, $\nu>0$ is a constant,
 $H:\mathbb{R}^2\to\mathbb{R}$ and $u:\Rset^2\times\Sset^1\times (-\epsilon_0,\epsilon_0)\to\Rset^2$ are analytic,
 and $J$ is the $2\times 2$ symplectic matrix,
\[
J=
\begin{pmatrix}
0 & 1\\
-1 & 0
\end{pmatrix}.
\]
Note that when $\epsilon=0$ Eq.~\eqref{mel} becomes the single-degree-of-freedom Hamiltonian system
\begin{align}\label{mel0}
\dot{x}=J\D H(x),
\end{align}
with the Hamiltonian $H(x)$.
We assume that the unperturbed system \eqref{mel0} satisfies the following conditions:
\begin {enumerate}
\setlength{\leftskip}{-0.6em}
\item[\bf(M1)]
There exists a one-parameter family of periodic orbits $x^{\alpha}(t)$
 with period $\hat{T}^{\alpha}>0$, $\alpha\in(0,1)$.
 \item[\bf(M2)]
$x^{\alpha}(t)$ is analytic with respect to $\alpha\in(0,1)$. Moreover, $d\hat{T}^{\alpha}/d\alpha>0$ holds.
\end{enumerate}
Setting $\phi=\nu t\mod 2\pi$ such that $\phi\in\Sset^1$,
we lift the non-autonomous system \eqref{sys} to
an autonomous system
\begin{equation}\label{mel-aut}
\dot{x}=J\D H(x)+\varepsilon u(x,\phi,\epsilon),\quad
\dot{\phi}=\nu.
\end{equation}
Since $H(x)$ is a first integral and $\displaystyle  Y=\frac{\partial}{\partial \phi}$ is a commutative vector field for \eqref{mel-aut} with $\varepsilon=0$,
assumptions~(A1) and (A2) in Section~\ref{sec:main} hold.

Now we fix relatively prime integers $l$ and $n$ and assume that $\alpha\in(0,1)$ satisfies
\begin{equation}
\frac{2\pi}{\hat{T}^\alpha}=\frac{n}{l}\nu.
\label{eqn:res4}
\end{equation}
We denote this $\alpha$ by $\alpha=\alpha^{l/n}$ as in \cite{GH83,W90,Y96}.
Then the \emph{subharmonic Melnikov function} is defined as
\begin{equation}
M^{l/n}(\phi)=\int_0^{2\pi l/\nu}\D H(x^{\alpha^{l/n}}(t))\cdot u(x^{\alpha^{l/n}}(t),\nu t+\phi,0)\d t.
\label{eqn:subM}
\end{equation}

It has been shown that, if $M^{l/n}(\phi)$ has a simple zero at $\phi=\phi_0$
 and $d\hat{T}^\alpha/d\alpha\neq 0$ at $\alpha=\alpha^{l/n}$, then there exists a $2\pi l/\nu$-periodic orbit of \eqref{mel-aut}
 near $(x,\phi)=(x^{\alpha^{l/n}}(t),\nu t+\phi_0)$,  for $|\epsilon|>0$ sufficiently small
  (see \cite{Y96} for the proof and \cite{GH83,W90} for related results).
We also note that, if the $2\pi l/\nu$-periodic orbit $(x(t),\phi(t))=(x^{\alpha^{l/n}}(t),\nu t+\phi_0)$ in \eqref{mel-aut} with $\varepsilon=0$ persists in the perturbed system for $|\epsilon|>0$, then $M^{l/n}(\phi_0)= 0$ holds (see Theorem 2.1 in \cite{MY21}).

Using the action angle coordinates, we can transform the single-degree-of-freedom Hamiltonian system \eqref{mel0} into the form \eqref{eqn:aasys0} with $n-q=q=1$.
Thus the lifted perturbed system \eqref{mel-aut} is transformed into the form \eqref{eqn:aasys0}
 with $q=2$ and $n-q=1$.
Here we take $I=\alpha$ as in \cite{MY23} and have $\omega(I)=(\tilde{\omega}(I),\nu)$ with
\[
\tilde{\omega}(\alpha)=\frac{2\pi}{\hat{T}^\alpha},
\]
in which case the transformed system is not Hamiltonian even when $\epsilon=0$.
Moreover, we see that (A3) and (A4)  in Section~\ref{sec:main} hold for \eqref{mel-aut}. In fact, (A4) is trivial and (A3) follows from the monotonicity of $\hat{T}^\alpha$ in (M2) and Remark~\ref{rmk:(A3)}.

Now we investigate the relationship between our integral defined in Section~\ref{sec:main} and the subharmonic Melnikov function.
For $\alpha=\alpha^{l/n}$ and $\tau=(\tau_1,\tau_2)$, let $\gamma_\tau^{\alpha^{l/n}}(t)=(x^\alpha(t+\tau_1),\nu (t+\tau_1)+\tau_2)$.
Note that $\gamma_\tau^\alpha(t)$ is a $2\pi l/\nu$-periodic orbit
 in \eqref{mel-aut} with $\epsilon=0$.
By the definition of the subharmonic Melnikov function, we easily see that for any $k\in\Nset$
\begin{align*}
	\mathscr{A}_{H}^{\alpha^{\ell/n}}(\tau,k2\pi\ell/\nu)&=\int_0^{k2\pi l/\nu}\D H(x^{\alpha^{\ell/n}}(t+\tau_1))\cdot u(x^{\alpha^{\ell/n}}(t),\nu (t+\tau_1)+\tau_2,0)\d t\\
	&=k\int_{0}^{2\pi l/\nu}\D H(x^{\alpha^{\ell/n}}(t))\cdot u(x^{\alpha^{\ell/n}}(t),\nu t+\tau_2,0)\d t\\
	&=k M^{\ell/n}(\tau_2).
\end{align*}

Thus, $\mathscr{A}_{H}^{\alpha^{\ell/n}}(\tau,T)$ is unbounded in $T$
if $M^{\ell/n}(\tau_2)$ is not identically zero.
Moreover, differentiating both sides of the above identity, we obtain
\begin{align*}
	\partial_{\tau_1} \mathscr{A}_{H}^{\alpha^{\ell/n}}(\tau,k2\pi\ell/\nu)=0,\quad
	\partial_{\tau_2} \mathscr{A}_{H}^{\alpha^{\ell/n}}(\tau,k2\pi\ell/\nu)=k\frac{d M^{\ell/n}}{d\tau_2}(\tau_2),
\end{align*}
which implies that $d_\tau \mathscr{A}_{H}^{\alpha^{\ell/n}}(\tau,T)$ is unbounded in $T$
if $M^{\ell/n}(\tau_2)$ is not constant.
Therefore, applying Theorems~\ref{thm:main1} and \ref{thm:main2}, we obtain  Theorems~5.1 and 5.2 in \cite{MY23}.
\begin{thm}[cf. Theorems 5.1 and 5.2 in \cite{MY23}]\label{thm:sub-mel}\ \ \ \\
Let $D_{\mathrm R}=\left\{\alpha=\alpha^{l/n}\in (0,1); \frac{2\pi}{\hat{T}^\alpha}=\frac{n}{l}\nu, \ l,n:\text{relatively prime}\right\}$.
\begin{itemize}
	\item[(i)]	If $
\mathscr{D}=\{\alpha^{l/n}\in D_{\mathrm R}; M^{l/n} \ \text{is not identically zero}\}$
 is a key set for $C^\omega((0,1))$,
 then for $|\epsilon|\neq 0$ sufficiently small
 the system \eqref{mel-aut} has no non-constant real-analytic first integral
  that  depends analytically on $\epsilon$ near $\epsilon=0$.
	\item[(ii)]	If $\mathscr{D}'=\{\alpha^{l/n}\in D_{\mathrm R}; M^{l/n} \ \text{is not constant}\}$ is a key set for $C^\omega((0,1))$, then for $|\epsilon|\neq 0$ sufficiently small
 the system \eqref{mel-aut} has no real-analytic, commutative vector field
 which depends analytically on $\epsilon$ and is linearly independent with the vector filed of \eqref{mel-aut} almost everywhere near $\epsilon=0$.
\end{itemize}
\end{thm}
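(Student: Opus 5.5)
The plan is to deduce both parts directly from Theorems~\ref{thm:main1} and \ref{thm:main2}, applied to the lifted system \eqref{mel-aut} in action-angle form. Here $n=3$, $q=2$, $n-q=1$, $F=H$ and $U=(0,1)$ with $I=\alpha$. Assumptions (A1)--(A4) were verified above. (A3) holds by (M2) and Remark~\ref{rmk:(A3)}, since $\omega(\alpha)=(2\pi/\hat{T}^\alpha,\nu)$ and $\tilde\omega'(\alpha)\neq0$ give two $\Rset$-linearly independent frequency vectors. (A4) holds because $n-q=1$ and $d_\alpha\omega=(\tilde\omega'(\alpha),0)\neq0$. So everything reduces to checking the inclusions $\mathscr{D}\subset\mathscr{Q}_1$ and $\mathscr{D}'\subset\mathscr{Q}'_1$. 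A superset of a key set is again a key set, since any analytic function vanishing on the superset vanishes on the key set and hence on $U$.

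For (i), take $\alpha=\alpha^{l/n}\in\mathscr{D}$. By the identity $\mathscr{A}_H^{\alpha^{l/n}}(\tau,2\pi kl/\nu)=kM^{l/n}(\tau_2)$ derived above, choose $\tau_2$ with $M^{l/n}(\tau_2)\neq0$. Then $\mathscr{A}_H^{\alpha}(\tau,\cdot)$ is unbounded along $T=2\pi kl/\nu$. Since $n-q=1$, this means $\mathscr{A}_H^\alpha$ has one unbounded direction, so $\alpha\in\mathscr{Q}_1$. Theorem~\ref{thm:main1} with $s=1$ then gives at most $n-q-s=0$ analytic first integrals depending analytically on $\varepsilon$ and functionally independent for $\varepsilon\neq0$. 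In other words, there is no non-constant one. A non-constant analytic first integral $G^\varepsilon$ would be functionally independent (as a single function) for $\varepsilon\ne0$ almost everywhere, by analyticity. If $G^\varepsilon$ degenerates at $\varepsilon=0$, the Ziglin-type normalization invoked in the proof of Theorem~\ref{thm:main1} handles it, so the hypotheses of Theorem~\ref{thm:main1} are met.

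For (ii), take $\alpha\in\mathscr{D}'$. The family $\{d_\tau\mathscr{A}_H^\alpha\}$ has a single member and two components. The first component $\partial_{\tau_1}\mathscr{A}_H^\alpha$ is bounded by Remark~\ref{rmk:q_unbdd_direc}. For the second, choose $\tau_2$ with $(M^{l/n})'(\tau_2)\neq0$; then $\partial_{\tau_2}\mathscr{A}_H^\alpha(\tau,2\pi kl/\nu)=k(M^{l/n})'(\tau_2)$ is unbounded. Hence one unbounded direction exists (and two are impossible), so $\alpha\in\mathscr{Q}'_1$. Theorem~\ref{thm:main2} with $s'=1$ then allows at most $q-s'=1$ linearly independent commuting vector fields, and by Remark~\ref{rmk:Z_1} that one is $X_\varepsilon$ itself. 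Therefore no additional commuting field linearly independent of $X_\varepsilon$ almost everywhere can exist.

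The main obstacle is bookkeeping rather than new analysis. One point is the parametrization of $\gamma_\tau^\alpha$: the paper uses $(x^\alpha(t+\tau_1),\nu(t+\tau_1)+\tau_2)$, which is not literally $\varphi(c,\omega t+\tau)$. It has to be reconciled with the convention $\partial_{\tau_k}\gamma=Y_k$ of Remark~\ref{rmk:Y}. This only amounts to a linear change of the $\tau$-coordinates, which preserves the number of unbounded directions because Definition~\ref{dfn:unbdd_A_family} is invariant under invertible linear recombination of components. The other point is that unboundedness along the subsequence $T=2\pi kl/\nu$ suffices, which is immediate from the definition.
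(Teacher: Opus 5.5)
Your proposal is correct and follows the paper's own argument. You verify (A1)--(A4) for \eqref{mel-aut}, use the identity $\mathscr{A}_H^{\alpha^{l/n}}(\tau,2\pi kl/\nu)=kM^{l/n}(\tau_2)$ and its $\tau$-derivatives to get $\mathscr{D}\subset\mathscr{Q}_1$ and $\mathscr{D}'\subset\mathscr{Q}'_1$, and then invoke Theorems~\ref{thm:main1} and \ref{thm:main2} with $s=s'=1$; your extra remarks (supersets of key sets are key sets, the linear reparametrization of $\tau$ matching the convention of Remark~\ref{rmk:Y}, and why a single non-constant analytic integral counts as functionally independent) make explicit details the paper leaves implicit.
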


We next consider the case of homoclinic orbits.
We additionally assume the following condition on the unperturbed system \eqref{mel0}:
\begin {enumerate}
\setlength{\leftskip}{-0.6em}
\item[\bf(M3)]
There exists a hyperbolic saddle $p$ with a homoclinic orbit $x^\h(t)$ such that
\[
\lim_{\alpha\to1}\sup_{t\in\Rset}d(x^\alpha(t),\Gamma)=0,
\]
where $\Gamma=\{x^\h(t)\mid t\in\Rset\}\cup\{p\}$
 and $d(x,\Gamma)=\inf_{y\in\Gamma}|x-y|$.
\end{enumerate}
Then the \emph{homoclinic Melnikov function} is defined as 
\begin{equation}
M(\phi)=\int_{-\infty}^{\infty}\D H(x^{\h}(t))\cdot u(x^{\h}(t),t+\phi,0)dt.
\label{eqn:homM}
\end{equation}
The standard argument in the Melnikov analysis asserts that
the homoclinic Melnikov function detects chaotic dynamics of \eqref{mel-aut}.
More precisely, for $|\epsilon|>0$ there exist transverse homoclinic orbits to a periodic orbit near $\{p\}\times\Sset^1$
 in \eqref{mel-aut}
provided $M(\phi)$ has a simple zero \cite{GH83,M63,W90}, and 
by the Smale-Birkhoff theorem \cite{GH83,W90} such transverse homoclinic orbits implies that the system \eqref{mel-aut} exhibits chaotic motions.
In addition, in this case, there is no real-analytic (additional) first integral (see, e.g., Chapter III of \cite{M73}).
Note that, even if $M(\phi)$ has a simple zero, there is the possibility of analytical integrability of \eqref{mel-aut}
 since it may have two additional linearly independent commutative vector fields.
 We also remark that the above statements provide no information of the integrability of \eqref{mel-aut} when $M(\phi)$ does not have simple zeros.

From Theorem~4.6.4 of \cite{GH83}, we have
\begin{equation}
\lim_{l\to \infty} M^{l/1}(\phi)=M(\phi)
\label{eqn:limM}
\end{equation}
for each $\phi\in \Sset^1$.
Therefore, it follows from \eqref{eqn:limM} that, if $M(\phi)$ is not identically zero (resp. constant), then
$M^{l/1}(\phi)$ is not identically zero (resp. constant)
for $l>0$ sufficiently large.

Let $\hat{U}\subset\Rset^2$ be a region such that $\partial\hat{U}\supset \Gamma$
and $\hat{U}\supset\{x^\alpha(t)\mid t\in[0,\hat{T}^\alpha)\}$
for some $\alpha\in(0,1)$.
Then, by Eq.~\eqref{eqn:limM} and Theorem~\ref{thm:sub-mel}, we obtain the following,
which is already proved in \cite{MY23}.
\begin{thm}[cf. Theorems 5.3 and 5.4 in \cite{MY23}]\label{thm:hom-mel}\ 
\begin{itemize}
	\item[(i)]	If $M(\phi)$ is not identically zero, then for $|\epsilon|\neq 0$ sufficiently small
 the system \eqref{mel-aut} has no non-constant real-analytic first integral
 in $\hat{U}\times\Sset^1$
 such that it depends analytically in $\epsilon$ near $\epsilon=0$.
	\item[(ii)]	If $M(\phi)$ is not constant, then for $|\epsilon|\neq 0$ sufficiently small
 the system \eqref{mel-aut} has no  real-analytic, commutative vector field in $\hat{U}\times\Sset^1$
 which depends analytically on $\epsilon$ and is linearly independent with the vector filed of \eqref{mel-aut} almost everywhere near $\epsilon=0$.
\end{itemize}
\end{thm}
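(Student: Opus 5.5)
The plan is to reduce Theorem~\ref{thm:hom-mel} to Theorem~\ref{thm:sub-mel}. The bridge is the limit \eqref{eqn:limM}, $M^{l/1}(\phi)\to M(\phi)$ as $l\to\infty$.

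First, consider the resonant values $\alpha^{l/1}\in D_\R$, i.e.\ those with $2\pi/\hat{T}^{\alpha}=\nu/l$. By (M2) and (M3), $\hat{T}^\alpha$ is strictly increasing and tends to $+\infty$ as $\alpha\to1$, since the orbits accumulate on the saddle connection $\Gamma$. Hence for every sufficiently large $l$ there is a unique $\alpha^{l/1}$, and $\alpha^{l/1}\to1$. For case (i), assume $M\not\equiv0$ and pick $\phi_0$ with $M(\phi_0)\neq0$. Then \eqref{eqn:limM} gives $M^{l/1}(\phi_0)\neq0$ for all $l\ge l_0$. So $\mathscr{D}$ contains the infinite sequence $\{\alpha^{l/1}\}_{l\ge l_0}$. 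For case (ii), assume $M$ is not constant and pick $\phi_1,\phi_2$ with $M(\phi_1)\ne M(\phi_2)$. The same argument puts $\alpha^{l/1}$ in $\mathscr{D}'$ for large $l$.

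The second step, which I expect to be the main obstacle, is showing that this sequence is a key set. It accumulates only at the endpoint $\alpha=1$ of $(0,1)$, and analytic functions on $(0,1)$ can vanish on a sequence tending to the boundary without vanishing identically. So the key-set property is not automatic. My plan is to work with analyticity of the first integral up to the separatrix. A first integral analytic on $\hat U\times\Sset^1$ with $\partial\hat U\supset\Gamma$ extends analytically across the periodic orbits near $\Gamma$. In the argument of Theorem~\ref{thm:main1}, the leading term $G^0=\psi\circ H$ has $\psi$ analytic as a function of the energy $h=H(x^\alpha)$ in a neighborhood of $h_\Gamma=H(\Gamma)$. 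I would therefore reparametrize the action by the energy rather than by $\alpha$. Viewed as points $h_l=H(x^{\alpha^{l/1}})$, the sequence then accumulates at the interior point $h_\Gamma$ of the domain where $\psi'$ is analytic. The obstruction quantity to check on $\mathscr{D}$ is the minor $\psi'$ (resp.\ $\rho$ in the commutative-vector-field case). It vanishes at every $h_l$, hence on a set with an interior accumulation point, hence identically by the identity theorem. In other words, the sequence is a key set for the relevant class of analytic functions, which is exactly what the proof of Theorem~\ref{thm:main1} uses. If that extension is problematic, the fallback is to consider $\psi$ only on the side $h\in(h_\Gamma-\delta,h_\Gamma]$ and argue through one-sided analyticity at $h_\Gamma$, which \cite{MY23} uses.

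Finally, I rerun the proofs of Theorems~\ref{thm:main1} and~\ref{thm:main2} in the setting $n-q=1$, $q=2$, with this sequence in place of the key set. Suppose a non-constant analytic first integral existed. Then $d\psi$ would vanish at each $\alpha^{l/1}$, because there $\mathscr{A}_H$ grows like $kM^{l/1}$, and so $d\psi\equiv0$, a contradiction. Likewise, an additional commutative vector field would force $\rho_{j,2}$ to vanish at each $\alpha^{l/1}$, because there $\partial_{\tau_2}\mathscr{A}_H$ grows like $k\,dM^{l/1}/d\tau_2$, and so $\rho_{j,2}\equiv0$, contradicting linear independence. This yields (i) and (ii).
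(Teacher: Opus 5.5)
Your route is the paper's own. The paper's entire derivation is ``by \eqref{eqn:limM} and Theorem~\ref{thm:sub-mel}'', with a pointer to \cite{MY23}: use the limit to get $M^{l/1}\not\equiv 0$ (resp.\ non-constant) for large $l$, then run the obstruction of Theorems~\ref{thm:main1} and~\ref{thm:main2} at the resonances $\alpha^{l/1}$. You have correctly found the delicate point, which that one-liner does not address explicitly: $\{\alpha^{l/1}\}_{l\ge l_0}$ accumulates only at the endpoint $\alpha=1$, so it is not a key set for $C^\omega((0,1))$.

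Your repair of this point is where the proof breaks. As stated, the hypothesis is analyticity on $\hat U\times\Sset^1$, with $\hat U$ a region (open) and $\Gamma\subset\partial\hat U$, so $\Gamma\times\Sset^1$ is not in the domain. Your phrase ``extends analytically across the periodic orbits near $\Gamma$'' is then either vacuous, since those orbits already lie in $\hat U$, or it is exactly the unproved continuation across the separatrix. Reparametrizing by energy changes nothing: $\alpha\to1$ corresponds to $h\to h_\Gamma$, and $h_\Gamma$ stays an endpoint of the interval on which $\psi$ is known to be analytic.

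The implication ``$d\psi=0$ at every $\alpha^{l/1}$ $\Rightarrow$ $d\psi\equiv0$'' really does fail for functions analytic only on $(0,1)$. Take $\psi$ with $\psi'(\alpha)=\sin(\nu\hat T^{\alpha}/2)$. It is analytic on $(0,1)$, and $\psi'$ vanishes wherever $\nu\hat T^{\alpha}\in 2\pi\Zset$, in particular at every $\alpha^{l/1}$. It is not identically zero, and since $\hat T^{\alpha}\to\infty$ as $\alpha\to1$ it oscillates infinitely often and has no analytic continuation to $\alpha=1$. Your fallback of ``one-sided analyticity at $h_\Gamma$'' is not an argument; any usable version of it is precisely the missing hypothesis. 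The same objection applies verbatim to $\rho$ in part (ii).

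To close the proof you need one of two things.

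\emph{Option (a).} Assume the first integral (resp.\ commuting field) is analytic on a neighbourhood of $\Gamma\times\Sset^1$, or at least of $\{p\}\times\Sset^1$. You must then prove the local fact you are implicitly using: near the hyperbolic saddle, an analytic first integral of $J\D H+\nu\partial_\phi$ is an analytic function of $H$. In analytic normal-form coordinates with $H=h_\Gamma+f(xy)$ and $f'(0)=\lambda\neq0$, a monomial $x^ay^be^{\sqrt{-1}k\phi}$ can occur only if $\lambda(a-b)+\sqrt{-1}\,\nu k=0$, i.e.\ $a=b$ and $k=0$. This gives the analytic continuation of $\psi$ past $h_\Gamma$, and only then does the identity theorem yield $\psi'\equiv0$. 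Part (ii) needs a corresponding normal-form statement for commuting fields. It must be formulated in the frame $\{X^0,\partial_\phi\}$ rather than in action-angle coordinates, because $\partial_{\theta_1}=X_H/\tilde\omega$ blows up at $\Gamma$.

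\emph{Option (b).} Find resonances in $\mathscr{D}$ (resp.\ $\mathscr{D}'$) accumulating at an interior point of $(0,1)$. This is unavailable from \eqref{eqn:limM}, which controls only $n=1$ as $l\to\infty$. It fails in general: for $u=(0,\cos\phi)$ one has $M^{l/n}\equiv0$ for every $n\ge2$. The harmonics of $x^{\alpha^{l/n}}$ have frequencies $kn\nu/l$, and $\gcd(l,n)=1$ prevents any of them from resonating with $\cos(\nu t+\phi)$.
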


\subsection{Periodically perturbed higher dimensional Hamiltonian systems}\label{sec:multi_DOF}
Now we turn to the higher dimensional case of Subsection~\ref{sec:single_DOF}.
In this subsection, our phase space is $\mathscr{M}=\Rset^n\times \Rset^n\times\Tset^d\times  \Rset^d$ and denote its coordinate by $x=(q,p,\theta,I)$ with $q=(q_1,\ldots,q_n)\in\Rset^n$, $p=(p_1,\ldots,p_n)\in\Rset^n$, $\theta=(\theta_1,\ldots,\theta_d)\in\Tset^d$ and $I=(I_1,\ldots,I_d)\in\Rset^d$.
Let $\displaystyle \Omega=\sum_{i=1}^n dp_i\wedge dq_i+\sum_{j=1}^d dI_j\wedge d\theta_j$ be the canonical symplectic form on $\Rset^n\times \Rset^n\times\Tset^d\times  \Rset^d$.
 Consider the autonomous rotator-pendulum Hamiltonian $H_0:\Rset^n\times \Rset^n\times\Tset^d\times  \Rset^d \to \Rset; x=(q,p,\theta, I)\mapsto H(x)$ given by
\begin{align}\label{eqn:RP}
	H_0(q,p,\theta,I)=h_0(I)+\sum_{j=1}^n G_j(q_j,p_j),
\end{align}
where $h:\Rset^d\to\Rset$ and $G_j:\Rset^2\to\Rset\ (j=1,\ldots,n)$ are analytic functions. 
The corresponding Hamiltonian system $X_{H_0}$ is given by
\begin{align}\label{eqn:vec_RP}
	\dot{x}=J_{n,d}\D_x H_0(x),
\end{align}
where $J$ is the $(n+d)\times (n+d)$ symplectic matrix
\[
	J_{n,d}=
\begin{pmatrix}
0 & I_{n}& 0 &0\\
-I_{n} & 0& 0 &0\\
 0 &0 & 0& I_{d}\\
 0 &0&-I_{d} & 0\end{pmatrix},
\]
and $I_{n}$ (resp. $I_d$) is the $n\times n$ (resp. $d\times d$) identity matrix.
Note that Eq.~\eqref{eqn:vec_RP} is a decoupled system with
\begin{align}\label{eqn:rotator}
	\dot{\theta}=\frac{\partial h}{\partial I}(I),\qquad \dot{I}=0,
\end{align}
and 
\begin{align}\label{eqn:pendulum}
	\dot{q}_j= \frac{\partial G_j}{\partial p_j}(q_j,p_j),\qquad \dot{p}_j=- \frac{\partial G_j}{\partial q_j}(q_j,p_j)
\end{align}
for $j=1,\ldots,n$.
Eq.~\eqref{eqn:rotator} (or the term $h_0(I)$ in \eqref{eqn:RP}) and \eqref{eqn:pendulum} (or the term $\sum_{i=1}^n G_j(q_j,p_j)$ in \eqref{eqn:RP}) are referred to as the rotator part and  the pendulum part of \eqref{eqn:vec_RP} respectively.
For the pendulum part \eqref{eqn:pendulum}, we assume the same conditions (M1)-(M3) in Subsection~\ref{sec:single_DOF} for $j=1,\ldots,n$. More precisely, we assume the following condition for \eqref{eqn:pendulum} with $j=1,\ldots,n$:
\begin {enumerate}
\setlength{\leftskip}{-0.6em}
\item[\bf(M1)$_j$]
There exists a one-parameter family of periodic orbits $z_j^{\alpha_j}(t)=(q_j^{\alpha_j}(t),p_j^{\alpha_j}(t))$ for \eqref{eqn:pendulum}
 with period $\hat{T}_j^{\alpha_j}>0$, $\alpha_j\in(0,1)$.
 \item[\bf(M2)$_j$]
$z_j^{\alpha_j}(t)$ is analytic with respect to $\alpha_j\in(0,1)$. Moreover, $d\hat{T_j}^{\alpha_j}/d\alpha>0$ holds.
\item[\bf(M3)$_j$]
There exists a hyperbolic saddle $z_j^*=(q_j^*,p_j^*)$ of \eqref{eqn:pendulum} with a homoclinic orbit $z_j^\h(t)=(q_j^{\h}(t),p_j^{\h}(t))$ such that
\[
\lim_{\alpha_j\to1}\sup_{t\in\Rset}d(z_j^{\alpha_j}(t),\Gamma_j)=0,
\]
where $\Gamma_j=\{z_j^\h(t)\mid t\in\Rset\}\cup\{z_j^*\}$
 and $d(z,\Gamma_j)=\inf_{y\in\Gamma_j}||z-y||$.
\end{enumerate}
Note that
\begin{align}\label{eqn:critical}
\frac{\partial G_j}{\partial q_j}(z_j^*)=0,\quad \frac{\partial G_j}{\partial p_j}(z_j^*)=0,\quad i=1,\ldots,n,
\end{align}
since $z_j^*$ is a saddle point of \eqref{eqn:pendulum}.

Moreover, for the rotator part \eqref{eqn:rotator}, we assume a local twist condition:
\begin {enumerate}
\setlength{\leftskip}{-0.6em}
\item[\bf(M4)]  $\frac{\partial^2 h}{\partial I^2}$ is non-degenerate for some $I=I^* \in \Rset^d$. 
\end{enumerate}
 We emphasize that it is not necessary to assume that $\frac{\partial^2 h}{\partial I^2}$ is positive definite. Furthermore, since $h$ is an analytic function, it follows from (M4) that, in fact, it suffices for $h$ not to be affine.
Note that under the conditions (M1)$_j$-(M3)$_j$ for $j=1,\ldots,n$ and (M4), the assumptions  (A1)-(A4) stated in Sections~\ref{sec:setting} and \ref{sec:main} are satisfied with pair-wise Poisson commutative first integrals $I_i\ (i=1,\ldots,d)$ and $G_j$
($j=1,\ldots,n$) of the unperturbed Hamiltonian $H_0$.

We now perturb the Hamiltonian system of $H_0(x)$. Let $\varepsilon_0>0$ be a constant and let $u:\Rset^n\times \Rset^n\times\Tset^d\times  \Rset^d\times \Sset^1\times (-\epsilon_0,\epsilon_0) \to \Rset^{n+d}; (x,\phi,\epsilon)=(q,p,\theta, I, \phi,\epsilon)\mapsto u(x,\phi,\epsilon)$ be analytic (time-dependent) vector field depending on a perturbation parameter $\epsilon$. Consider the following perturbed system:
\begin{align}\label{eqn:vec_RP_perturbed}
	\dot{x}=J_{n,d}\D_x H_0(x)+\epsilon u(x,t,\epsilon).
\end{align}
Note that the perturbed system is, in general, not Hamiltonian, as we also want to consider systems with dissipation.
See Sections 
\ref{sec:application_partially_conformal} and \ref{sec:systems_penduli}.

The corresponding autonomous system in the extended phase space $\Rset^n\times \Rset^n\times\Tset^d\times  \Rset^d\times \Sset^1$ is
\begin{align}\label{eqn:vec_RP_perturbed_aut}
	\dot{x}=J_{n,d}\D_x H_0(x)+\epsilon u(x,\phi,\epsilon),
	\qquad \dot{\phi}=1.
\end{align}



Consider the $n$-dimensional (homoclinic) \emph{Melnikov vector} $M=(M_1,\ldots, M_n)$ of components
\begin{equation}\label{eqn:Melnikov_vector}
\begin{split}
    &M_j(\tau,\theta, I,\phi)\\
	&\quad=-\int_{-\infty}^\infty \D_x G_j\cdot u(z_1^\h(t+\tau_1),\ldots,z_n^\h(t+\tau_n),\omega(I)t+\theta,I,
    t+\phi,0) dt,
\end{split}
\end{equation}
where $z_j^\h(t+\tau_j)=(q_j^\h(t+\tau_j), p_j^\h(t+\tau_j))$ for $j=1,\ldots,n$
and $\cdot$ represents the standard inner product in $\Rset^{n+d}$.
Letting
\[
u=(u_{q_1},\ldots, u_{q_n},u_{p_1},\ldots,u_{p_n}, u_{\theta_1},\ldots, u_{\theta_d},u_{I_1},\ldots, u_{I_d})^\top,
\]
we have
\begin{align}\label{eqn:computation}
\D_x G_j\cdot u=\frac{\partial G_j }{\partial q_j}u_{q_j}+\frac{\partial G_j}{\partial p_j} u_{p_j}.
\end{align}
From  \eqref{eqn:computation} and \eqref{eqn:critical} we have
\[
\D_x G_j\cdot u(z^*_1,\ldots,z^*_n, \omega(I)t+\theta, I, t+\phi, 0)=0 .
\]
Since the integrand in  \eqref{eqn:Melnikov_vector} converges exponentially fast to $0$, the improper integral in is absolutely convergent  and so the $n$-dimensional (homoclinic) Melnikov vector $M$ is well-defined. 



Below, we use the Melnikov vector $M$ to derive upper bounds on the number of persistent first integrals and commuting vector fields.
We will apply our results to general systems, including systems with dissipation. See Sections \ref{sec:application_partially_conformal} and \ref{sec:systems_penduli}.

We now give our main results in this setting. The proofs of the following results are given after Remark~\ref{rmk:Melnikov}.
\begin{thm}\label{thm:Melnikov1}
	Assume that (M1)$_j$-(M3)$_j$ ($j=1,\ldots,n$) and (M4) hold for the unperturbed system \eqref{eqn:vec_RP}.
	Let
	\[
	s=\max_{I'\in \Rset^d} \mathrm{rank}_\Rset \ \left(M(\Rset^n\times \Tset^d\times\{I=I'\}\times\Sset^1)\right).
	\]
	Then the perturbed system \eqref{eqn:vec_RP_perturbed_aut} in the augmented phase space has at most $n+d-s$ real-analytic first integrals in $(x,t,\epsilon)$ such that they are functionally independent in $(x,\phi)$ for $|\epsilon|\neq 0$.
\end{thm}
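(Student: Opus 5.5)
The plan is to reduce Theorem~\ref{thm:Melnikov1} to Theorem~\ref{thm:main1}, with $q=n+d+1$ and $n-q$ replaced by $n+d$. The extended unperturbed system \eqref{eqn:vec_RP_perturbed_aut} with $\varepsilon=0$ on the $(2n+2d+1)$-dimensional space has first integrals $I_1,\ldots,I_d,G_1,\ldots,G_n$. It also has commuting fields $X_{G_1},\ldots,X_{G_n}$, $\partial_{\theta_1},\ldots,\partial_{\theta_d}$ and $\partial_\phi$, so (A1)--(A3) hold near level sets $\{G_j=g_j,\ I=I'\}$ with $g_j$ inside the periodic region. Action--angle coordinates are $(\alpha_1,\ldots,\alpha_n,I)$. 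Condition (A3) follows from (M2)$_j$ together with (M4) via Remark~\ref{rmk:(A3)}, since the frequencies $\omega=(2\pi/\hat T_1^{\alpha_1},\ldots,2\pi/\hat T_n^{\alpha_n},\partial h/\partial I,1)$ have full rank variation after one uses the constant $1$. The task is then to show that $\mathscr{Q}_s$ is a key set for $C^\omega(U)$, with the $s$ defined in the statement; Theorem~\ref{thm:main1} then gives the bound $n+d-s$.

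The Melnikov vector $\mathscr{A}_F^c$ has components $(\mathscr{A}_{G_1},\ldots,\mathscr{A}_{G_n},\mathscr{A}_{I_1},\ldots,\mathscr{A}_{I_d})$. It suffices to show that the $G$-components alone have $s$ unbounded directions on a key set, since extra components can only increase the count. I use the sufficient condition, Proposition~\ref{prop:sufficient_unbounded}. Take resonant parameters $c$ with $\hat T_j^{\alpha_j}=l_j\cdot 2\pi$ (integer multiples of the forcing period $2\pi$) for large integers $l_j$, and $I$ chosen so that $\partial h/\partial I$ is rational with a common period. The set of such $c$ should be dense in a neighborhood of $(\alpha_j\to1, I^*)$ in the appropriate sense; more precisely, I would take it as a key set using density in $I$ (by (M4), $\partial h/\partial I$ is a local diffeomorphism near $I^*$) and accumulation in the $\alpha_j$ directions. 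On this set $D_\R$ the integrals $\I_{G_j}^c(\tau)$ are subharmonic Melnikov integrals.

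The key step, and the main obstacle, is the approximation $\I^c_{G_j}\to -M_j$ (up to sign and multiples coming from the common period) as $l_j\to\infty$, uniformly on compacta in $(\tau,\theta,\phi)$. This is the higher-dimensional analogue of \eqref{eqn:limM} (Theorem~4.6.4 of \cite{GH83}). I would prove it by splitting the period into a window $|t|\le L$ around the homoclinic excursion, where $z_j^{\alpha_j}\to z_j^\h$ uniformly, and the remainder near the saddle, where the integrand is exponentially small by \eqref{eqn:critical} and hyperbolicity. The delicate point is that a common period contains many excursions of each pendulum: $l_j$ must be chosen compatibly so that over one common period each factor contributes a controlled number of homoclinic-like passages, with phases parametrized by $\tau_j$.

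Given this convergence, if $M(\Rset^n\times\Tset^d\times\{I'\}\times\Sset^1)$ spans an $s$-dimensional space, choose $s$ points whose images are linearly independent. By continuity, nearby $\I^c_G$ values remain independent for large $l$ and for $I$ near $I'$, so $s_c\ge s$ on the resonant set. Proposition~\ref{prop:sufficient_unbounded} then gives $c\in\mathscr{Q}_{s}$ (or a higher $\mathscr{Q}$, contained in $\mathscr{Q}_s$). The remaining concern is to verify that this set of $c$ is indeed a key set. Here I would argue as in Theorem~\ref{thm:hom-mel}: an analytic function vanishing on a set accumulating to the separatrix in each $\alpha_j$ and dense in $I$ vanishes identically, applied iteratively one variable at a time.
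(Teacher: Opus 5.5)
Your reduction to Theorem~\ref{thm:main1} through Proposition~\ref{prop:sufficient_unbounded}, and your use of density of rational $\nabla h$ in the $I$-directions, match the paper. The gap is in the step you yourself flag as delicate. The limit you assert there is false in the generality you need.

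Convergence of $\I^c_{G_j}$ to a multiple of $M_j$ holds only if all pendulum periods coincide. In that case the resonant $\alpha$'s lie in the zero set of the nonzero analytic function $\hat T_1^{\alpha_1}-\hat T_2^{\alpha_2}$, so they cannot form a key set when $n\ge 2$.

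If instead the $l_j$ differ, then over one common period pendulum $j$ makes many excursions, and during most of them some of the other pendula sit near their saddles. Those excursions contribute integrals in which the trapped pendula are frozen at $z_k^*$: the paper's $\int f^{\h}_{j,m}\,dt$, with $z_{m+1},\dots,z_n$ replaced by $z^*_{m+1},\dots,z^*_n$. These are not multiples of $M_j$, and they generally dominate the sum. So linear independence of the $M(\tau_i,\theta_i,I',\phi_i)$ does not pass to the subharmonic vectors by continuity, and the argument stalls here.

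The missing idea is the paper's nested choice $\hat T_m^{\alpha_m}=k_mk_{m-1}\cdots k_1\,2\pi lR_I$ with odd $k_i$. With this choice, Proposition~\ref{prop:higher_approximation} gives the explicit limit
\[
\int_{-\infty}^{\infty} f^{\h}_j\,dt+\sum_{i=2}^n (k_i-1)k_{i+1}\cdots k_n\int_{-\infty}^{\infty} f^{\h}_{j,i-1}\,dt .
\]
A putative dependence $\sum_i\xi_i w_i=0$ among the subharmonic vectors is then ruled out in stages:
\begin{enumerate}
\item divide by $k_2$ and let $k_2\to\infty$, which kills the $f^{\h}_{[1]}$-combination;
\item substitute this back into the relation;
\item repeat with $k_3,\dots,k_n$;
\item at the end you reach $\sum_i\xi_i M(\tau_i,\theta_i,I,\phi_i)=0$, a contradiction.
\end{enumerate}
It is the freedom in the integers $k_i$, not a single family of resonances, that separates the extra terms from the homoclinic one.

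Your closing key-set argument is also invalid as stated. An analytic function of $\alpha_j$ that vanishes on points accumulating only at the separatrix need not vanish identically. For example, $\sin(\hat T_1^{\alpha_1}/2)$ is analytic and vanishes at every resonance with $\hat T_1^{\alpha_1}\in 2\pi\Zset$.

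The paper's Lemma~\ref{lem:key_set} supplies only the fiberwise step: a key set $D$ of rational frequencies in $I$, combined with key sets $K_I$ in the $\alpha$-variables. For $K_I$ the paper takes the nested resonant set and asserts the key-set property without further argument. That set lies in the same zero set $\{\sin(\hat T_1^{\alpha_1}/2)=0\}$, so this point needs an argument beyond accumulation at the separatrix on either route.
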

\begin{thm}\label{thm:Melnikov2}
	Assume that (M1)$_j$-(M3)$_j$ ($j=1,\ldots,n$) and (M4) hold for the unperturbed system \eqref{eqn:vec_RP}.
	Let
	\[
	s'=\max_{\substack{I'\in\Rset^d\\(\tau',\theta',\phi')\in\Rset^n\times\Tset^d\times \Sset^1}} \mathrm{rank}_\Rset(d_\tau M(\tau',\theta', I',\phi')).
	\]
	Then for $|\varepsilon|\neq 0$ the perturbed system \eqref{eqn:vec_RP_perturbed_aut} has at most $(n+d-s')$ real-analytic first integrals and at most $n+d-s'$ real-analytic commutative vector fields in $(x,t,\epsilon)$ such that they are functionally independent in $(x,\phi)$ and linearly independent almost everywhere in the augmented phase near $\varepsilon=0$ respectively.
\end{thm}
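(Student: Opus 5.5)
The plan is to reduce Theorem~\ref{thm:Melnikov2} to Theorems~\ref{thm:main1} and \ref{thm:main2}. The bridge is to approximate the homoclinic Melnikov vector $M$ by subharmonic Melnikov vectors along resonant tori, and then apply the corollary to Proposition~\ref{prop:sufficient_unbounded}. This mirrors the one-degree-of-freedom argument leading to Theorem~\ref{thm:hom-mel}.

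\textbf{Setup.} In the region enclosed by the separatrices, the unperturbed extended system \eqref{eqn:vec_RP_perturbed_aut} with $\epsilon=0$ has action-angle coordinates. The actions are $(\alpha_1,\ldots,\alpha_n,I)$, which number $n+d$. The angles are the $n$ pendulum phases, $\theta\in\Tset^d$ and $\phi$, which number $q=n+d+1$. By (M2)$_j$ and (M4), assumptions (A1)--(A4) hold (with the frequency map $(2\pi/\hat T_j^{\alpha_j},\partial h/\partial I,1)$), so both main theorems are available. The first integrals are $F=(G_1,\ldots,G_n,I_1,\ldots,I_d)$. The relevant Melnikov functions are $\mathscr{A}_{G_j}$, together with $\mathscr{A}_{I_i}$, which we simply keep as extra components.

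\textbf{Resonant approximation.} Fix the point $(\tau',\theta',I',\phi')$ where $\mathrm{rank}\, d_\tau M$ attains $s'$. By real-analyticity the maximal rank is attained on an open dense set, so we may perturb $I'$ so that $\partial h/\partial I(I')$ is rational. Then choose $\alpha_j\to1$ such that all periods $\hat T_j^{\alpha_j}$ are large, mutually commensurate and commensurate with $2\pi$ and with the rotator periods. Along a sequence $l\to\infty$ this gives a set of actions $c$ in $D_\R$ accumulating on the separatrix.

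For such $c$, the argument of Theorem~4.6.4 of \cite{GH83}, applied componentwise, gives
\[
\I^{c}_{G_j}(\tau)\to -M_j
\]
up to the period-shift normalization. The ingredients are exponential convergence of $z_j^{\alpha_j}$ to $z_j^\h$ on compact time windows, and the vanishing of $\D G_j\cdot u$ at the saddle from \eqref{eqn:critical}. The convergence is $C^1$ in $\tau$ as well.

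Lower semicontinuity of rank then gives $\mathrm{rank}\, d_\tau\I_F^{c}\ge s'$ for such $c$. By the corollary after Proposition~\ref{prop:sufficient_unbounded}, both $\mathscr{A}_F^c$ and $\{d_\tau\mathscr{A}^c_{F_i}\}$ have at least $s'$ unbounded directions. The same reasoning with the span of $M$ yields at least $s$ unbounded directions for Theorem~\ref{thm:Melnikov1}.

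\textbf{Key set and conclusion.} Theorems~\ref{thm:main1} and \ref{thm:main2} require that the set of such $c$ be a key set in $U$. The resonant approximants obtained above lie in every neighborhood of the separatrix but form only a countable set. I would argue as follows. The rank condition on $d_\tau\I_F^c$ (and on the span) is an open condition in $c$, and the subharmonic integrals extend analytically off $D_\R$ via the averaged Melnikov integral. Hence the condition holds on a set of resonant actions that is dense in some open set $V\subset U$ near the separatrix, and such a set is a key set. The main theorems then give at most $n+d-s'$ first integrals and at most $q-s'$ commuting vector fields. To match the stated bound for commuting vector fields, I expect to account for the bounded direction $\partial_{\tau_1}$ noted in Remark~\ref{rmk:q_unbdd_direc}, together with the $\phi$-direction.

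\textbf{Main obstacle.} The hardest step is the uniformity of the approximation $\I^c\to M$ in $C^1$ simultaneously for all $n$ pendula with different resonant periods, together with producing a genuine key set rather than a sequence accumulating at the boundary of $U$. For the latter, I would first try to use analyticity of the rank minors in $c$ along a real-analytic curve of actions approaching the separatrix.
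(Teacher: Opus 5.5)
Your frame (resonant approximation, the corollary to Proposition~\ref{prop:sufficient_unbounded}, then Theorems~\ref{thm:main1} and~\ref{thm:main2}) matches the paper's. You also correctly flag the two hard points, but both of your resolutions fail.

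First, along resonant tori whose pendulum periods are merely commensurate, it is not true that $\I^c_{G_j}\to -M_j$. The integrand $\D G_j\cdot u$ depends on all pendula. Over the common period, the faster pendula make many excursions while the slower ones sit near their saddles. The argument of Theorem~4.6.4 of \cite{GH83} only controls a window whose length is the shortest period (see the note after Lemma~\ref{lem: periodic_to_homoclinic}). What one actually gets is Proposition~\ref{prop:higher_approximation}. For $\hat T_m^{\alpha_m(l)}=k_m\cdots k_1\,2\pi lR$ with odd $k_i$, the full-period integral tends to $\int f_j^\h\,dt+\sum_{i=2}^{n}(k_i-1)k_{i+1}\cdots k_n\int f_{j,i-1}^\h\,dt$, where in $f^\h_{j,i-1}$ the pendula $i,\dots,n$ are frozen at their saddles. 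In general this equals $M_j$ only when $k_2=\cdots=k_n=1$, i.e.\ for equal periods. Restricting to equal periods does not help. The admissible pendulum actions then lie on the curve $\hat T_1^{\alpha_1}=\cdots=\hat T_n^{\alpha_n}$, on which the nonzero analytic function $\hat T_1^{\alpha_1}-\hat T_2^{\alpha_2}$ vanishes, so no key set can come from it. The missing idea is to keep the ratios $k_i$ free, since this is what enlarges the admissible set, and to accept the extra partial terms. One then shows that a linear dependence among the $s$ subharmonic vectors forces relations, after dividing by the dominant coefficient and letting the $k_i\to\infty$ one at a time. These relations hold first among the partial integrals and finally among the $M(\tau_i,\theta_i,I,\phi_i)$, contradicting their independence. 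For Theorem~\ref{thm:Melnikov2} the same is done with $\tau$-derivatives.

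Second, your key-set step is not viable. $\I^c_F$ exists only for $c\in D_\R$, and its period $T^c$ jumps as $c$ varies, so it has no analytic (or even continuous) extension off $D_\R$. At a nonresonant $c$, the average $\lim_{T\to\infty}\mathscr{A}^c_F(\tau,T)/T$ is a torus average independent of $\tau$, so it carries no rank information. Hence the rank condition on $d_\tau\I^c_F$ is not open in $c$. Analyticity along a curve of actions does not help either, since zeros accumulating only at an endpoint (the separatrix) force nothing. In the paper's proof of Theorem~\ref{thm:Melnikov1}, which it says carries over, openness is applied only to the homoclinic vector:
\begin{itemize}
\item continuity of $M$ in $I$ gives an open set of rotator actions on which the independence condition (for you, the rank-$s'$ condition) persists;
\item (M4) makes $\nabla h$ a local diffeomorphism there, so actions with rational frequency vector form a dense subset $D$;
\item for each $I\in D$, the free parameters $k_i,l$ give a set $K_I$ of pendulum actions;
\item Lemma~\ref{lem:key_set} assembles these into $\bigcup_{I\in D}K_I\times\{I\}$.
\end{itemize}

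Finally, the bound $q-s'=n+d+1-s'$ from Theorem~\ref{thm:main2} cannot be lowered by another bounded direction. The flow direction of Remark~\ref{rmk:q_unbdd_direc} is already the reason $s'\le q-1$. For example, take \eqref{eqn:pendulums} with $\alpha\neq0$ and $\delta_i=\mu_i=0$. There $n=2$, $d=1$, $s'=1$, and there are three commuting fields $X_\varepsilon,\partial_\theta,\partial_\phi$. So the stated $n+d-s'$ must be read as counting fields besides $X_\varepsilon$.
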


\begin{rmk}\label{rmk:Melnikov}
	In the classical Melnikov method, the existence of non-degenerate zeros of the Melnikov vector gives a sufficient condition for transverse intersections between the stable and unstable manifolds of a normally hyperbolic invariant manifold. Such intersections imply chaotic dynamics, which can be used to prove the non-integrability of the perturbed system.
	In the case when the perturbation $u$ includes both external forcing and dissipation, and the dissipation is sufficiently strong relative to the forcing, the Melnikov vector may not have non-degenerate zeros, and so  the classical Melnikov method may fail to establish the existence of chaotic dynamics.
	However, even in such a case, Theorem~\ref{thm:Melnikov1} (resp. Theorem~\ref{thm:Melnikov2}) enables us to determine how many  first integrals (resp., commutative vector fields) of the unperturbed system 
    can persist under the perturbation, which gives a more detailed description of the system's non-integrability.
\end{rmk}

\begin{rmk}\label{rmk:sharp2}
 The upper bounds in Theorems~\ref{thm:Melnikov1} and \ref{thm:Melnikov2} are sharp, as are those in Theorems~\ref{thm:main1} and \ref{thm:main2}.
Consider the $4n+1$-dimensional system \eqref{eqn:vec_RP_perturbed_aut} with $d=n$,
\[
H_0=\sum_{i=1}^n \frac{1}{2}I_i^2+\sum_{j=1}^n \left(\frac{p_j^2}{2}+1-\cos q_j\right),
\]
and
\[
u(x,t,\epsilon)=\sum_{j=1}^n \epsilon \mu_j \cos \theta_j \frac{\partial}{\partial p_j},
\]
where $\mu_j\ (j=1,\ldots,n)$ is a non-zero constant.
Then the perturbed system \eqref{eqn:vec_RP_perturbed_aut} has $n$ functionally independent first integrals $I_1,\ldots, I_n$
and $n+1$ linearly independent commutative vector fields $X_{I_1},\ldots, X_{I_n}$ and $\displaystyle Z=\frac{\partial}{\partial t}$.
 On the other hand, since this system is  decoupled,
the computation of of the Melnikov vector can be done componentwise, yielding
\[
M_j(\tau,\theta,I,\phi)=-2\pi\mu_j \sech\left(\frac{\pi I_j}{2}\right)\cos \left(\frac{\theta_j}{I_j}-\tau_j\right)
\]
for $I_j\neq 0\ (j=1,\ldots,n)$. 
Therefore, from Theorem~\ref{thm:Melnikov2}, we conclude that, for $|\epsilon|\neq 0$, there is no additional real-analytic first integral (resp. commutative vector field)
which is functionally independent with $I_1,\ldots,I_n$ (resp. linearly independent almost everywhere with $X_{I_1},\ldots,X_{I_n},Z$) and depending  analytically on the perturbation parameter $\epsilon$.
The sharpness of Theorems~\ref{thm:Melnikov1}, \ref{thm:Melnikov2}, and \ref{thm:main1} can be shown in a similar way.
\end{rmk}

\begin{rmk}
When the perturbation vector field $u$ is Hamiltonian, i.e., there exists an analytic function $H^1:\Rset^n\times \Rset^n\times\Tset^d\times  \Rset^d\times \Sset^1 \times (-\epsilon_0,\epsilon_0) \to \Rset; (x,\phi)=(q,p,\theta, I,\phi)\mapsto H^1(x,\phi,\epsilon)$ such that
\[
u(x,t,\epsilon)=J_{n,d}\D_x H^1(x,t,\epsilon)
\]
then 
the corresponding perturbed non-autonomous Hamiltonian system is
\begin{align}\label{eqn:vec_RP_perturbed_Hamilton}
	\dot{x}=J_{n,d}\D_x H_0(x)+\epsilon J_{n,d}\D_x H^1(x,t,\epsilon)
\end{align}
and its lifted autonomous system on $\Rset^n\times \Rset^n\times\Tset^d\times  \Rset^d\times \Sset^1$ is given by
\begin{align}\label{eqn:vec_RP_perturbed_aut_Hamilton}
	\dot{x}=J_{n,d}\D_x H_0(x)+\epsilon J_{n,d}\D_x H^1(x,\phi,\epsilon),\qquad \dot{\phi}=1.
\end{align}
Then each component of the Melnikov vector $M$ can be represented as
\begin{align*}
	&M_j(\tau,\theta,I,\phi)\\
	&\quad=-\int_{-\infty}^\infty \{G_j, H^1\}(z_1^\h(t+\tau_1),\ldots,z_n^\h(t+\tau_n),\omega(I)t+\theta,I,t+\phi,0) dt,
\end{align*}
where $\{,\}$ is the Poisson bracket associated with the canonical symplectic form $\Omega=\sum_{i=1}^n dp_i\wedge dq_i+\sum_{j=1}^d dI_j\wedge d\theta_j$.
In this case, we can introduce the \emph{Melnikov potential} $\mathcal{M}$ given by
\begin{align*}
\mathcal{M}(\tau,\theta,I,\phi)=-\int_{-\infty}^\infty\Big( H^1(z_1^\h(t+\tau_1),\ldots,z_n^\h(t+\tau_n),\omega(I)t+\theta, I,t+\phi,0)\\
\qquad-H^1(z_1^*,\ldots,z_n^*,\omega(I)t+\theta,I,t+\phi,0)\Big) dt
\end{align*}
and obtain
\begin{align}\label{eqn:gradient}
M_j(\tau,\theta, I,\phi)=\frac{\partial \mathcal{M}}{\partial \tau_j}(\tau,\theta, I,\phi),\quad j=1,\ldots,n.
\end{align}
Again, note that the Melnikov potential $\mathcal{M}$ is well-defined by the hyperbolicity of $z^*_j\ (j=1,\ldots,n)$.
From \eqref{eqn:gradient}, it holds that the non-degenerate critical points of $\mathcal{M}(\tau,\theta, I,\phi)$ in the variable $\tau$ correspond to the non-degenerate zeros of $M(\tau,\theta, I,\phi)$ in $\tau$.
\end{rmk}

We now want to prove Theorems~\ref{thm:Melnikov1}  and \ref{thm:Melnikov2}, which are higher dimensional versions of Theorem~\ref{thm:hom-mel}.
The main difficulty is to obtain an analog of \eqref{eqn:limM}. When the pendulum part has one degree of freedom, the integral along the periodic orbit is approximated by the integral along the homoclinic orbit. On the other hand, when the pendulum part has two or more degrees of freedom, considering the integral along the periodic orbits requires examining  periodic orbits of various frequency ratios; therefore, the challenge lies in determining how to relate the integral along the homoclinic orbits according to these frequency ratios. 
Our strategy is the following:
\begin{itemize}
\item[(1)] We show that, for a family of periodic orbits near the homoclinic orbit in each plane,
there exists a critical parameter value $\alpha^*_j$ such that all of periodic orbits $z_j^\alpha$ are trapped in a small neighborhood of the saddle point for a very long time for $\alpha_j\in (\alpha^*_j,1)$. (Trapping estimate)
\item[(2)] Approximate  some linear combinations  of the homoclinic Melnikov integrals {projected to some space} by subharmonic Melnikov integrals for periodic orbits with  suitable frequency ratios  near the homoclinic manifold in the $2n$-dimensional space.
\item[(3)] Extract unbounded directions from the subharmonic Melnikov integral by using the periodicity of the integrands.
\end{itemize}
\begin{lem}[Trapping estimate]\label{lem:trapping_estimate}
Let $\mathscr{U}$ a bounded domain in $\Rset^n$.
Then, for every small $\mu>0$, there exists $(T,\alpha^*)=(T_{\mathscr{U},\mu}, \alpha^*_{\mathscr{U},\mu})\in \Rset_{>0}\times (0,1)^n$ such that,
for every $\tau\in\mathscr{U}$, $\alpha_j\in(\alpha_j^*,1)$ with
$T_j^{\alpha_j}>T$, for $j=1,\ldots,n$, we have
\[
d_{\Rset^2}(z^\h_j(t+\tau_j),z^{\alpha_j}_j(t+\tau_j))<\mu\quad \text{for all} \ t\in  [-T_j^{\alpha_j}/2,T_j^{\alpha_j}/2]
\]
and
\[
	d_{\Rset^2}(z_j^{\alpha_j}(t+\tau_j),z_j^*)<\mu
	\quad \text{for all} \ t\in  [-T_j^{\alpha_j}/2,-T/2]\cup[T/2, T_j^{\alpha_j}/2].
\]
\end{lem}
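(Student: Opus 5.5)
The plan is to treat each pendulum factor separately, since the statement is decoupled in $j$. I fix $j$ and write $z^\alpha=z_j^{\alpha_j}$, $z^\h=z_j^\h$, $z^*=z_j^*$, $\hat T^\alpha=T_j^{\alpha_j}$, and set $R=\sup_{\tau\in\mathscr{U}}|\tau|<\infty$. At the end I take $T$ to be the maximum over $j$ of the times found, and $\alpha^*$ componentwise.

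First I fix the phase normalization, which the statement tacitly requires. I choose a short analytic transversal section $\Sigma$ to $\Gamma$ through $z^\h(0)$. By (M3), for $\alpha$ close to $1$ the closed orbit $z^\alpha$ meets $\Sigma$, and I parametrize it so that $z^\alpha(0)\in\Sigma$ and $z^\alpha(0)\to z^\h(0)$ as $\alpha\to1$. Next I choose $T$ from the homoclinic orbit. Since $z^\h(t)\to z^*$ as $t\to\pm\infty$, there is $T_0$ with $|z^\h(t)-z^*|<\mu/3$ for $|t|\ge T_0/2$. I set $T=T_0+2R+2$, so that $|z^\h(t+\tau)-z^*|<\mu/3$ whenever $|t|\ge T/2-R-1$ and $|\tau|\le R$.

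The second step is a finite-time comparison. On the compact interval $[-T,T]$, continuous dependence on initial conditions together with $z^\alpha(0)\to z^\h(0)$ gives $\alpha^*$ such that $|z^\alpha(s)-z^\h(s)|<\mu/3$ for all $|s|\le T$ and all $\alpha\in(\alpha^*,1)$. Because $|\tau|\le R$, this covers $s=t+\tau$ for $|t|\le T/2+R$. It therefore gives the first inequality on the part of $[-\hat T^\alpha/2,\hat T^\alpha/2]$ with $|t|\le T/2+R$. It also gives $|z^\alpha(t+\tau)-z^*|<2\mu/3$ for $T/2\le|t|\le T/2+R$.

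The third step, which I expect to be the main obstacle, is to control the long remaining arc. By periodicity, the set $\{t+\tau: t\in[T/2,\hat T^\alpha/2]\cup[-\hat T^\alpha/2,-T/2]\}$ corresponds modulo $\hat T^\alpha$ to the single arc $s\in[T/2+\tau,\ \hat T^\alpha-T/2+\tau]$. I must show this arc stays in the ball $B_\mu(z^*)$, and I would do this by a flow-box argument. The set $\Gamma\setminus B_{\mu/3}(z^*)$ is contained in the compact arc $z^\h([-T_0/2,T_0/2])$, on which $|J\D G_j|$ is bounded below. A thin tubular neighbourhood $N$ of this arc is therefore a flow box crossed by $\Sigma$ exactly once. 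By (M3), for $\alpha$ near $1$ the whole orbit $z^\alpha$ lies within $\mu/3$ of $\Gamma$, hence in $N\cup B_{2\mu/3}(z^*)$.

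Since $z^\alpha$ is a closed orbit hitting $\Sigma$ once per period (the time $0$ crossing), it makes exactly one passage through $N$ per period, and this passage is the one near $s=0$ already controlled in the second step. The passage takes time at most $T_0+1<T-2R$, because speeds in $N$ are bounded below and the comparison with $z^\h$ holds there. Consequently, for $s$ in the complementary arc the point $z^\alpha(s)$ is not in $N$ outside $B_{2\mu/3}$, so it lies in $B_{2\mu/3}(z^*)\subset B_\mu(z^*)$. This gives the second inequality. The first inequality on this arc then follows because both $z^\h(t+\tau)$ and $z^\alpha(t+\tau)$ are within $2\mu/3$ of $z^*$ there (replacing $\mu$ by a fixed multiple at the outset). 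Finally I shrink $\alpha^*$ so that $\hat T^\alpha>T$, which is possible since $\hat T^\alpha\to\infty$ by (M2)--(M3). All choices depend only on $R$ and $\mu$, which gives the required uniformity in $\tau\in\mathscr{U}$.
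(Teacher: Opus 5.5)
Your proof is correct, but its key step runs opposite to the paper's. The paper chooses $T$ from the decay of $z_j^\h$, as you do. It then asserts, citing (M2)$_j$--(M3)$_j$, that $d_{\Rset^2}(z^\h_j(t+\tau_j),z^{\alpha_j}_j(t+\tau_j))<\mu/3$ on the \emph{whole} window $[-T_j^{\alpha_j}/2,T_j^{\alpha_j}/2]$ for $\alpha_j$ near $1$, and deduces trapping near $z_j^*$ by the triangle inequality. So the paper goes from closeness to trapping and takes the closeness, which is essentially the first conclusion, for granted. You proceed the other way:
\begin{itemize}
\item you get closeness only on a fixed compact window, by continuous dependence;
\item you prove trapping on the long remaining arc by a flow-box/section argument, using set-wise closeness to $\Gamma_j$ and the planar fact that a closed orbit meets a transversal at most once;
\item you then recover closeness on that arc from trapping.
\end{itemize}
Your route is longer, but it justifies what the paper leaves implicit. (M3)$_j$ controls the orbit only as a set, not its time parametrization over an interval of length $T_j^{\alpha_j}\to\infty$, and continuous dependence alone does not reach that far. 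You also make explicit the phase normalization without which the first inequality fails.

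A few points to tidy up:
\begin{itemize}
\item With $T=T_0+2R+2$, the decay bound for $z^\h(t+\tau)$ needs $|t|\ge T/2-1$, not $|t|\ge T/2-R-1$.
\item The window $[-T,T]$ is only guaranteed to cover $t+\tau$ for $|t|\le T/2$, not $|t|\le T/2+R$. That is all you actually use.
\item The flow-box step is cleanest if $N$ is the flow-out $\{\Phi_s(\sigma):\sigma\in\Sigma,\ |s|\le T_0/2+\delta\}$ of the section under the unperturbed flow $\Phi_s$, with $0<\delta\le 1$. Apply (M3)$_j$ at a tolerance below both $\mu/3$ and the width of $N$ around $z^\h([-T_0/2,T_0/2])$. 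Uniqueness of the crossing of $\Sigma$ then pins the passage of $z^\alpha$ through $N$ to exactly $s\in[-T_0/2-\delta,T_0/2+\delta]$ modulo the period, with no speed bounds needed.
\end{itemize}
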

\begin{proof}
Fix $\tau=(\tau_1,\ldots,\tau_n)\in\mathscr{U}(\subset \Rset^n)$.
Take arbitrary small $\mu>0$.
Let $U_j:=B_\mu(z_j^*)=\{z\in\Rset^2\mid d_{\Rset^2}(z,z_j^*)<\mu\}$ for each $j=1,\ldots, n$.
Take $T>0$ so that
\begin{align}\label{eqn:homo_saddle}
d_{\Rset^2}(z_j^\h(t),z_j^*)<\mu/3\quad \text{for all}\ |t|\ge T/2
\end{align}
and
$B_{\mu/3}(z^\h_j(\pm T/2+\tau_j))\subset U_j$ for all $j=1,\ldots, n$. Note that here we use
\[
\lim_{T\to+\infty} d_{\Rset^2}(z^\h_j(\pm T/2)),z^*_j)=0
\]
and that $T$ does not depend on the choice of $\tau\in\mathscr{U}$ since $\mathscr{U}$ is bounded.
By (M2)$_j$ and (M3)$_j$ for $j=1,\ldots,n$, there exists $\alpha^*=(\alpha_1^*,\ldots,\alpha_n^*)\in (0,1)^n$ such that
\begin{itemize}
\item[(1)] $T_j^{\alpha_j^*}>T$,
\item[(2)] $T_j^{\alpha_j}$ is monotonically increasing for $\alpha_j\in(\alpha_j^*,1)$,
\item[(3)] For all $t\in  [-T_j^{\alpha_j}/2,T_j^{\alpha_j}/2],\ \alpha\in(\alpha_j^*,1)$ and $j=1,\ldots,n$, it holds that
\begin{align}\label{eqn:homo_per}
d_{\Rset^2}(z^\h_j(t+\tau_j),z^\alpha_j(t+\tau_j))<\mu/3.
\end{align}
\end{itemize}
This implies that for each $\alpha_j \in (\alpha_j^*,1)$, it holds that
\begin{align*}
	z_j^{\alpha_j}(t+\tau_j)\in U_j
	\quad \text{for all} \ t\in  [-T_j^{\alpha_j}/2,-T/2]\cup[T/2, T_j^{\alpha_j}/2]
\end{align*}
since 
\begin{align*}
	d_{\Rset^2}(z_j^{\alpha_j}(t+\tau_j),z_j^*)&\le d_{\Rset^2}(z_j^{\alpha_j}(t+\tau_j),z_j^{\h}(t+\tau_j))+d_{\Rset^2}(z_j^{\h}(t+\tau_j),z_j^*)\\
	&\le  \mu/3+\mu/3=2\mu/3
\end{align*}
for $t\in  [-T_j^{\alpha_j}/2,-T/2]\cup[T/2, T_j^{\alpha_j}/2]$ by \eqref{eqn:homo_saddle} and \eqref{eqn:homo_per}.
\end{proof}

Let $r=(r_1,\ldots, r_n)$ be an $n$-tuple of positive integers with $r_1\le r_2\le\cdots\le r_n$.
 By $d\hat{T_j}^{\alpha_j}/d\alpha>0$ and $\displaystyle \lim_{\alpha_j\to 0} \hat{T}_j^{\alpha_j}=+\infty$ for $j=1,\ldots,n$, we see that for sufficiently large $l\in\Nset$,
there exists a unique $n$-tuple $\alpha(l)=\alpha=(\alpha_1,\ldots,\alpha_n)\in (0,1)^n$ such that
\[
(\hat{T}_1^{\alpha_1},\ldots, \hat{T}_n^{\alpha_n})=2\pi l(r_1,\ldots, r_n).
\]
Below, we will frequently use the notations
\[
z^{\alpha(l)}=(z_1^{\alpha_1(l)},\ldots, z_n^{\alpha_n(l)}),\quad 
z^{\h}=(z_1^{\h},\ldots, z_n^{\h}).
\]
For $\tau=(\tau_1,\ldots,\tau_n)\in\Rset^n$, denote
\begin{align*}
z(t,\tau):=(z_1(t+\tau_1),\ldots,z_n(t+\tau_n))
\end{align*}
and
\begin{align*}
z_{[m]}(t,\tau):=(z_1(t+\tau_1),\ldots,z_{m}(t+\tau_m), z_{m+1}^*,\ldots,z_{n}^*),\qquad m=1,\ldots,n
\end{align*}
where we put $z_{[n]}(t,\tau)=z(t,\tau)$.
Note that
\[
\int_{-\infty}^\infty f_{j,n}^{\h} (t;\tau,\theta, I,\phi)dt=M_j(\tau,\theta, I,\phi),\qquad 
j=1,\ldots, n.
\]

For each $(\tau,\theta,I,\phi)\in\Rset^n\times \Tset^d\times \Rset^d\times \Sset^1$, $j=1,\ldots, n$ and $t\in\Rset$,
set
	\begin{align*}
	f_j^{\alpha(l)}(t;\tau,\theta, I,\phi)&:=\D G_j\cdot u(z^{\alpha(l)}(t,\tau),\omega(I)t+\theta,I,t+\phi,0)\\
	 f_j^\h(t;\tau,\theta, I,\phi)&:=\D G_j\cdot u(z^{\h}(t,\tau),\omega(I)t+\theta,I,t+\phi,0).
	\end{align*}
    and
    \begin{align*}
    f_{j,m}^{\alpha(l)}&(t;\tau,\theta, I,\phi):=
    \D G_j\cdot u(z_{[m]}^{\alpha(l)}(t,\tau),\omega(I)t+\theta,I,t+\phi,0),\\
    f_{j,m}^\h&(t;\tau,\theta, I,\phi):=
    \D G_j\cdot u(z_{[m]}^{\h}(t,\tau),\omega(I)t+\theta,I,t+\phi,0)
    \end{align*}
    where $m=1,\ldots,n$.

The following result is a higher dimensional analogue of Theorem 4.6.4 in \cite{GH83}]
\begin{lem}[Approximation of  the homoclinic Melnikov integrals by subharmonic Melnikov integrals]\label{lem: periodic_to_homoclinic}
Let $U$ and $\mathscr{U}$ be a bounded domain in $\Rset^d$ and $\Rset^n$ respectively. Then it holds that
\[
\lim_{l\to+\infty} \int_{-\pi l r_1}^{\pi l r_1} f_{j,m}^{\alpha(l)}(t;\tau,\theta, I,\phi) dt=\int_{-\infty}^{\infty} f_{j,m}^{\h}(t;\tau,\theta, I,\phi) dt
\]
uniformly in $(\tau,\theta, I,\phi)\in  \mathscr{U}\times \Tset^d\times\bar{U}\times\Sset^1$
for $j,m=1,\ldots, n$.
\end{lem}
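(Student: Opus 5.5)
The plan is the classical ``split into a core piece and tails'' argument used for Theorem 4.6.4 of \cite{GH83}, run uniformly in the parameters with the help of Lemma~\ref{lem:trapping_estimate}. Fix $j,m$ and $\eta>0$. The target is to find $L$ such that for $l>L$
\[
\Big|\int_{-\pi l r_1}^{\pi l r_1} f_{j,m}^{\alpha(l)}\,dt-\int_{-\infty}^{\infty} f_{j,m}^{\h}\,dt\Big|<\eta
\]
for all $(\tau,\theta,I,\phi)\in\mathscr{U}\times\Tset^d\times\bar U\times\Sset^1$. The parameters enter $u$ only through the bounded arguments $\omega(I)t+\theta$, $I\in\bar U$, $t+\phi$, so the compact set $\Tset^d\times\bar U\times\Sset^1$ controls them. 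The pendulum variables stay in a compact neighbourhood $K$ of $\bigcup_k\Gamma_k$ for $\alpha$ near $1$.

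First I would split the integrals into a core $|t|\le T/2$ and tails $T/2\le|t|\le\pi l r_1$ (respectively $|t|\ge T/2$). Here $T$ comes from Lemma~\ref{lem:trapping_estimate} applied with a small $\mu$, and $T$ is also chosen so large that $\int_{|t|\ge T/2}|f^{\h}_{j,m}|\,dt<\eta/3$. This last choice is possible because $|f^{\h}_{j,m}|\le C\,d(z_j^{\h}(t+\tau_j),z_j^*)\le Ce^{-\lambda|t|}$ uniformly for $\tau$ in the bounded set $\mathscr{U}$. That estimate uses \eqref{eqn:computation}, \eqref{eqn:critical}, and the fact that $\D G_j$ is Lipschitz on $K$.

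On the core, Lemma~\ref{lem:trapping_estimate} together with the choice $\hat T_k^{\alpha_k(l)}=2\pi l r_k\ge 2\pi l r_1$ gives $|z_k^{\alpha_k(l)}(t+\tau_k)-z_k^{\h}(t+\tau_k)|<\mu$. Uniform continuity of the integrand on the relevant compact set then bounds the core difference by $T\cdot\omega_{\rm cont}(\mu)$. We have to make this smaller than $\eta/3$ by shrinking $\mu$, and this feeds back into $T$. I would handle the circularity by fixing $T$ from the homoclinic decay alone, and then applying the trapping lemma's continuity argument on the fixed window $[-T/2,T/2]$.

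The main obstacle is the periodic tail. Its length is of order $l$, so a pointwise bound $C\mu$ alone is insufficient. Only the $j$-th factor matters, because $\D G_j\cdot u$ contains the factors $\partial_{q_j}G_j,\partial_{p_j}G_j$ evaluated at the $j$-th pendulum point. This point equals $z_j^*$ if $j>m$, making the integrand identically zero. For $j\le m$ it is $z_j^{\alpha_j(l)}$. So I need $\int_{T/2\le|t|\le \pi l r_1}d(z_j^{\alpha_j(l)}(t+\tau_j),z_j^*)\,dt\to$ small uniformly. I would prove this with a linearization at the hyperbolic saddle. Near $z_j^*$, in analytic (or $C^1$) Hartman/Morse-type coordinates where the flow is close to $(\xi,\eta)\mapsto(e^{-\lambda t}\xi,e^{\lambda t}\eta)$, an orbit with energy gap $\delta=|G_j(z^{\alpha_j})-G_j(z_j^*)|$ satisfies $|\xi\eta|\approx\delta$. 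Its distance to the saddle is then $\lesssim e^{-\lambda s}+\delta e^{\lambda s}$ in the local passage time $s$, where the half-passage time is $\sim\lambda^{-1}\log(1/\delta)$. Integrating gives $\int \lesssim \lambda^{-1}e^{-\lambda T/2}+O(1)\cdot\mu$. Equivalently, $\le C(e^{-\lambda T/2}+\mu)$ with $C$ independent of $l$ and of the parameters. Choosing $T$ large and $\mu$ small then makes the tail $<\eta/3$. Combining the three pieces finishes the proof. The uniformity in $\tau$ follows because $\mathscr{U}$ is bounded, so the shifts $\tau_j$ only move the window by a bounded amount, and that can be absorbed by enlarging $T$.
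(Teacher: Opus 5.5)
Your proposal is correct, and it uses the same core/tail decomposition as the paper. Fix $\mu$, take $T$ from the trapping lemma and from the homoclinic decay, then take $l$ large so that continuous dependence on the fixed window $[-T/2,T/2]$ makes the core difference small. The order of choices $\eta\to\mu\to T\to l$ removes the circularity you worried about, exactly as in the paper.

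The genuine difference is in how the tails are controlled.
\begin{itemize}
\item The paper never estimates passage times. It bounds $|\D G_j\cdot u|\le\|u\|\,\|\D G_j(z_j)\|$ and uses that $\|\D G_j(z_j)\|$ equals the speed $\|X_{G_j}(z_j)\|$ of the $j$-th pendulum. The tail integral thus becomes an arc-length integral, bounded by $\sup\|u\|$ times the length of the piece of the $j$-th orbit inside a small neighbourhood $U_j(\mu)$ of $z_j^*$. That length is $O(\mu)$ both for $\Gamma_j$ and for the nearby periodic orbits.
\item You instead use $|\D G_j(z)|\le C|z-z_j^*|$. You then bound $\int d(z_j^{\alpha_j(l)},z_j^*)\,dt$ over one passage through a linearizing chart at the saddle ($C^1$ Hartman, or an analytic normal form), obtaining $C(e^{-\lambda T/2}+\mu)$.
\end{itemize}

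Your route is more quantitative. It exhibits the exponential rate and shows explicitly why a tail of length $O(l)$ costs only $O(\mu)$. The price is importing a linearization theorem with uniform control over the whole family of orbits. The paper's route is coordinate-free and shorter. It tacitly relies, however, on the same geometric fact: the arcs of the periodic orbits inside $U_j(\mu)$ have uniformly small length.

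The two arguments are closely linked. Since $z_j^*$ is hyperbolic, $|X_{G_j}(z)|\ge c|z-z_j^*|$ near $z_j^*$. Your time integral of the distance is therefore at most $c^{-1}$ times the paper's arc length. You could use this to dispense with the linearization entirely.
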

    Note that we cannot replace $r_1$ in Lemma~\ref{lem: periodic_to_homoclinic} with other $r_2,\ldots, r_n$ since Lemma~\ref{lem: periodic_to_homoclinic} relies on the fact that the motion of homoclinic orbit $z^{\h}(t)$ is approximated by $z^{\alpha(l)}(t)$ on $t\in[-T/2,T/2]$, for sufficiently large $T>0$ and $l\in\Nset$.
\begin{proof}[Proof of Lemma~\ref{lem: periodic_to_homoclinic}]
    First, for each $l\in\Nset$  and $t\in\Rset$, we compute
	\begin{align*}
		& \int_{-\infty}^{\infty} f_{j,m}^{\h}(t;\tau,\theta, I,\phi) dt-\int_{-\pi l r_1}^{\pi l r_1} f_{ j,m}^{\alpha(l)}(t;\tau,\theta, I,\phi)dt\\
		&=\Big(\int_{T/2}^\infty f_{j,m}^\h(t;\tau,\theta, I,\phi) dt+\int_{-T/2}^{T/2} f_{j,m}^\h(t;\tau,\theta, I,\phi) dt+\int_{-\infty}^{-T/2} f_{j,m}^\h(t;\tau,\theta, I,\phi) dt\Big)\\
		&\quad-\Big(\int_{T/2}^{\pi l r_1} f_{j,m}^{\alpha(l)}(t;\tau,\theta, I,\phi) dt
		+\int_{-{T/2}}^{{T/2}} f_{j,m}^{\alpha(l)}(t;\tau,\theta, I,\phi) dt
		+\int_{-\pi l r_1}^{-{T/2}} f_{j,m}^{\alpha(l)}(t;\tau,\theta, I,\phi) dt\Big)\\
		&=\int_{-{T/2}}^{T/2} \big(f_{j,m}^\h(t;\tau,\theta, I,\phi)- f_{j,m}^{\alpha(l)}(t;\tau,\theta, I,\phi)\big) dt+\Big(\int_{T/2}^\infty f_{j,m}^\h(t;\tau,\theta, I,\phi) dt\\
		&\qquad\qquad -\int_{T/2}^{\pi l r_1} f_{j,m}^{\alpha(l)}(t;\tau,\theta, I,\phi) dt
		+\int_{-\infty}^{-T/2} f_{j,m}^\h(t;\tau,\theta, I,\phi) dt-\int_{-\pi l r_1}^{-T/2} f_{j,m}^{\alpha(l)}(t;\tau,\theta, I,\phi) dt\Big).
	\end{align*}
	Fix arbitrary $\mu>0$.
	For each $j=1,\ldots,n$, let $U_j(\mu)$ be an open neighborhood of $z^*_j\in\Rset^2$ such that the arc lengths of $U_j(\mu)\cap \Gamma_j$ 
    is less than $\mu$.
	For sufficiently large $T>0$ 
    we see that 
    $z_j^\h(\pm T/2+\tau_j)$ 
    belongs to $U_j(\mu)$
    for all $\tau\in \mathscr{U}$.
	
	By the smoothness of integrant for each variable and continuity of solutions with respect to initial conditions, we deduce that there is $l^*=l^*(T,\mathscr{U}) \in\Nset$ such that for all $l>l^*$ we have $2\pi l r_1>T$ and
	\[
	\Big|\int_{-T/2}^{T/2} \big(f_{j,m}^\h(t;\tau,\theta, I,\phi)- f_{j,m}^{\alpha(l)}(t;\tau,\theta, I,\phi)\big) dt\Big|<\mu,\qquad j=1,\ldots, d.
	\]
   Now, using Lemma~\ref{lem:trapping_estimate}, we take $l^{**}\in\Nset$ such that if $l>l^{**}$
    the arc lengths of $U_j(\mu)\cap \Gamma_j^{\alpha_j(l)}$ 
    is less than $\mu$ and
    $z_j^{\alpha_j(l)}(t+\tau_j)$ 
    belongs to $U_j(\mu)$
    for all $j=1,\ldots,n$, $\tau\in \mathscr{U}$ and $t\in [-\pi l r_1, -T/2]\cup [T/2,\pi l r_1]$.
    Set $l'=\max\{l^*,l^{**}\}$.
	Then, introducing the arc length parameters, we compute
	\begin{align*}
		\Big|\int_{T/2}^\infty f_{j,m}^\h(t;\tau,\theta, I,\phi) dt\Big|
		&=\Big|\int_0^{L(z^\h)} \D G_j\cdot u \frac{1}{||X_{H_0}||}ds\Big|\\
		&\le \int_0^{L(z^\h)} \Big|\D G_j\cdot u  \frac{1}{||X_{H_0}||}\Big|ds\\
		&\le \int_0^{L(z^\h)} || u || ds\le CL(z^\h)\le C\mu,
	\end{align*}
	where $C=\max_{(x,\phi)\in W\times \Sset^1} ||u(x,I',\theta',\phi;0)||$ with $W=\Gamma_1\times\ldots\times\Gamma_{n}\times \bar{U}\times \Tset^d$.
	Note that here we use the relation $||\D G_j||\le ||X_{H_0}||$ on $\Gamma_1\times\ldots\times\Gamma_{n}\setminus\{0\}$.
	We can obtain similar estimates
	for $\int_{T/2}^{\pi l r_1} f_{j,m}^{\alpha(l)}(t;\tau,\theta, I,\phi) dt$, $\int_{-\infty}^{-{T/2}} f_{j,m}^\h(t;\tau,\theta, I,\phi) dt$, $\int_{-\pi l r_1}^{-{T/2}} f_{j,m}^{\alpha(l)}(t;\tau,\theta, I,\phi) dt$ for all $l>l'$,
	and thus we conclude that
	\begin{align*}
		\Big|\int_{T/2}^\infty f_{j,m}^\h(t;\tau,\theta, I,\phi) dt&-\int_{T/2}^{\pi l r_1} f_{j,m}^{\alpha(l)}(t;\tau,\theta, I,\phi) dt
		+\int_{-\infty}^{-{T/2}} f_{j,m}^\h(t;\tau,\theta, I,\phi) dt\\
		& -\int_{-\pi l r_1}^{-{T/2}} f_{j,m}^{\alpha(l)}(t;\tau,\theta, I,\phi) dt\Big|\le 4C\mu
	\end{align*}
	provided $l> l'$.
	Hence
	\[
		\left|\int_{-\infty}^{\infty} f_{j,m}^{\h}(t;\tau,\theta, I,\phi) dt-\int_{-\pi l r_1}^{\pi l r_1} f_{j,m}^{\alpha(l)}(t;\tau,\theta, I,\phi)dt\right|< (4C+1)\mu,
	\]
	which implies that $\displaystyle \left|\int_{-\infty}^{\infty} f_{j,m}^{\h}(t;\tau,\theta, I,\phi) dt-\int_{-\pi l r_1}^{\pi l r_1} f_j^{\alpha(l)}(t;\tau,\theta, I,\phi)dt\right|\to 0$ as $l\to+\infty$.
	\end{proof}
    
Combining Lemma~\ref{lem:trapping_estimate} and Lemma~\ref{lem: periodic_to_homoclinic}, we obtain the following approximation formula.
\begin{prop}\label{prop:higher_approximation}
Suppose that $I\in U$ satisfies $\omega(I)\in \frac{1}{R}\Zset^d$ for some $R\in\Nset$.
Let $\mathscr{U}$ be a bounded domain in $\Rset^n$. 
For arbitrary positive  odd  integers $k_1,k_2,\ldots,k_n$ and  $l\in\Nset$, consider the case $T_{m}^{\alpha_{m}(l)}=k_{m}k_{m-1}\ldots k_1 2\pi lR\ (m=1,\ldots, n)$, i.e., $r_m=k_mk_{m-1} \ldots k_1 R$. Then
\begin{align*}
\lim_{l\to+\infty} &\int_{ -T_n^{\alpha_n(l)}/2}^{ T_n^{\alpha_n(l)}/2} f_j^{\alpha(l)}(t;\tau,\theta, I,\phi) dt\\
&=\int_{-\infty}^{\infty} f_j^\h(t;\tau,\theta, I,\phi) dt+\sum_{i=2}^{n}\frac{k_1k_2\cdots k_n}{k_1 k_2\cdots k_i}(k_i-1) 
\int_{-\infty}^{\infty} f_{j,i-1}^{\h}(t;\tau,\theta, I,\phi) dt
\end{align*}
uniformly in $(\theta,\tau,\phi)\in \Tset^d\times \mathscr{U}\times \Sset^1$. 
\end{prop}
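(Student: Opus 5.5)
We prove the following more general statement by induction on $m$, and obtain the proposition as the case $m=n$:
\[
\lim_{l\to\infty}\int_{-T_m^{\alpha_m(l)}/2}^{T_m^{\alpha_m(l)}/2} f_{j,m}^{\alpha(l)}\,dt
=\int_{-\infty}^\infty f_{j,m}^{\h}\,dt+\sum_{i=2}^{m}\frac{k_1\cdots k_m}{k_1\cdots k_i}(k_i-1)\int_{-\infty}^\infty f^{\h}_{j,i-1}\,dt .
\]
This uses $f_{j,n}^{\alpha(l)}=f_j^{\alpha(l)}$ and $f_{j,n}^{\h}=f_j^{\h}$. The case $m=1$ is Lemma~\ref{lem: periodic_to_homoclinic}, since $T_1^{\alpha_1(l)}/2=\pi l r_1$. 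Summing the resulting recursion gives the product coefficients, as checked at the end.

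\textbf{Block decomposition.} Since $k_m$ is odd, we split $[-T_m/2,T_m/2]$ into $k_m$ consecutive blocks $J_p=[pT_{m-1}-T_{m-1}/2,\,pT_{m-1}+T_{m-1}/2]$ of length $T_{m-1}$, with $|p|\le (k_m-1)/2$.

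The shift $t\mapsto t+pT_{m-1}$ preserves every ingredient of the integrand except the $m$-th pendulum coordinate:
\begin{itemize}
\item for $i<m$, $z_i^{\alpha_i(l)}$ has period $T_i$, which divides $T_{m-1}$;
\item the rotator phase changes by $\omega(I)pT_{m-1}\in \frac1R\Zset^d\cdot 2\pi lR\,\Zset\subset 2\pi\Zset^d$;
\item the forcing phase $t+\phi$ changes by a multiple of $2\pi$.
\end{itemize}
By Lemma~\ref{lem:trapping_estimate}, on every block $J_p$ with $p\ne0$ we have $|t|\ge T_{m-1}/2>T/2$, so $z_m^{\alpha_m(l)}(t+\tau_m)$ stays $\mu$-close to $z_m^*$. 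Replacing $z_m$ by $z_m^*$ and shifting back to $J_0$ therefore gives
\[
\int_{J_p} f_{j,m}^{\alpha(l)}\,dt\approx\int_{-T_{m-1}/2}^{T_{m-1}/2} f_{j,m-1}^{\alpha(l)}\,dt,
\]
and the right-hand side converges by the induction hypothesis. This produces the $(k_m-1)$ copies of the level-$(m-1)$ limit.

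\textbf{The central block.} On $J_0$ we iterate the same decomposition at levels $m-1,m-2,\dots$. At level $i$, the off-centre sub-blocks have $|t|>T/2$, so both $z_i$ and all higher coordinates $z_{i+1},\dots,z_m$ are near their saddles. Each such sub-block contributes approximately the level-$(i-1)$ integral of $f_{j,i-1}^{\alpha(l)}$. The innermost window $[-\pi l r_1,\pi l r_1]$ converges to $\int f_{j,m}^{\h}$ by Lemma~\ref{lem: periodic_to_homoclinic}.

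Writing $I_m$ for the limit at level $m$, this yields the recursion
\[
I_m=\int f^{\h}_{j,m}\,dt+\sum_{i=2}^{m}(k_i-1)\,I_{i-1}.
\]
Unrolling it gives the stated coefficients $\frac{k_1\cdots k_m}{k_1\cdots k_i}(k_i-1)$; this is a routine induction.

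\textbf{Main obstacle.} The hard part is controlling the error from replacing $z_m^{\alpha_m(l)}$ by $z_m^*$ on a block whose length $T_{m-1}\sim l$ tends to infinity. A uniform bound of the form $\mathrm{Lip}\cdot\mu\cdot T_{m-1}$ is useless.

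To handle this, we would use the hyperbolicity of $z_m^*$. Along $z_m^{\alpha_m(l)}$ the distance to the saddle satisfies $\lesssim e^{-\lambda|t|}+e^{-\lambda(T_m/2-|t|)}$. Its integral over $\{|t|\ge T/2\}$ is $O(e^{-\lambda T/2})$ uniformly in $l$, and this is small for large $T$. We would derive this from the linearization (or a Hartman–Grobman/normal-form estimate) near $z_m^*$.

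Combining this with the Lipschitz dependence of the integrand on $z_m$ makes each block error small, uniformly in $l$ and in $(\theta,\tau,\phi)\in\Tset^d\times\mathscr{U}\times\Sset^1$. Since the number of blocks $k_m$ is fixed, the total error is also small, which gives the claimed uniform convergence.
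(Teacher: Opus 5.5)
Your argument is correct and follows the paper's architecture:
\begin{itemize}
\item splitting $[-T_m/2,T_m/2]$ into $k_m$ blocks of length $T_{m-1}$;
\item shifting the off-centre blocks using the periodicity of $z_1,\dots,z_{m-1}$, of the rotator phase and of the forcing;
\item replacing the outer pendulum coordinates by their saddles via Lemma~\ref{lem:trapping_estimate};
\item handling the innermost window by Lemma~\ref{lem: periodic_to_homoclinic}.
\end{itemize}
Your recursion $I_m=\int f^{\h}_{j,m}\,dt+\sum_{i=2}^m(k_i-1)I_{i-1}$ is exactly the paper's $a^m_{j,m}=a^1_{j,m}+\sum_{i=1}^{m-1}(k_{i+1}-1)a^i_{j,i}+O(\mu)$, followed by the $S_{j,m}$ induction.

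The real difference is in the step you correctly flag as the main obstacle: the replacement error on blocks whose length tends to infinity. The paper handles it with the arc-length estimate from the proof of Lemma~\ref{lem: periodic_to_homoclinic}. Since $\|\D G_j(z_j)\|$ is the planar speed of $z_j$, the integral $\int|\D G_j(z_j(t))\cdot v(t)|\,dt$ is at most $\sup\|v\|$ times the arc length traced by $z_j$. This gives three cases:
\begin{itemize}
\item For $j=m$, the arc length of $z_m^{\alpha_m(l)}$ inside the $\mu$-neighbourhood of $z_m^*$ is $O(\mu)$, however long the orbit lingers there.
\item For $j<m$, the difference of integrands is $\D G_j(z_j)\cdot[u(\dots,z_m,\dots)-u(\dots,z_m^*,\dots)]$. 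It is bounded by $L\mu$ ($L$ a Lipschitz constant of $u$) times the length of $k_{j+1}\cdots k_{m-1}$ loops of $z_j$, which is independent of $l$.
\item For $j>m$, both integrands vanish identically.
\end{itemize}
So the paper needs only the qualitative trapping lemma plus the special form $\D G_j\cdot u$ of the integrand. You instead import a quantitative hyperbolicity estimate: exponential approach to the saddle via a $C^1$ linearization or Moser's normal form. That input is heavier, but it does not care about the structure of the integrand (any integrand Lipschitz in $z_m$ would do), and it gives an explicit rate.

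One slip needs fixing. The pointwise bound should be $\lesssim e^{-\lambda|t|}+e^{-\lambda(T_m-|t|)}$, because the closest approach to $z_m^*$ occurs around $t+\tau_m=\pm T_m/2$, halfway through the passage near the saddle. With $T_m/2-|t|$ as you wrote it, the second term is of order one at the ends of the window and integrates to about $1/\lambda$ over $[T/2,T_m/2]$. The claimed $O(e^{-\lambda T/2})$ would then not follow. With the corrected bound, which on $|t|\le T_m/2$ reduces to $\lesssim e^{-\lambda|t|}$, it does.
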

Note that
\[
\frac{k_1k_2\cdots k_n}{k_1 k_2\cdots k_i}(k_i-1) =\begin{cases} k_2-1 & (n=2)\\ (k_2-1)k_3\ldots k_n & (n\ge 3).
\end{cases}
\]
\begin{proof}
Fix $j=1,\ldots, n$ and arbitrarily small $\mu>0$.
We take $l\in\Nset$ sufficiently large so that $\alpha_j(l)>\alpha_j^*$ with $\alpha^*$ in Lemma~\ref{lem:trapping_estimate} and 
\[
\left|\int_{-\pi lr_1}^{\pi l r_1} f_{j,m}^{\alpha(l)}(t;\tau,\theta, I,\phi)dt-
\int_{-\infty}^{\infty} f_{j,m}^{\h}(t;\tau,\theta, I,\phi)dt\right|<\mu,\quad m=1,\ldots,n.
\]
Let $k'_m=(k_m-1)/2$ and
\[
a_{j,m}^i(l;\tau,\theta, I,\phi)=a_{j,m}^i:=\int_{-T_i^{\alpha_i(l)}/2}^{T_i^{\alpha_i(l)}/2} f_{j,m}^{\alpha(l)}(t;\tau,\theta, I,\phi) dt
\]
for $i,m=1,\ldots,n$.
Consider intervals
\[
\Delta(m;\sigma)=\left[\left(\sigma-\frac{1}{2}\right)T_{m}^{\alpha_{m}(l)}, \left(\sigma+\frac{1}{2}\right)T_{m}^{\alpha_{m}(l)}\right],\quad \sigma=0,\pm1,\ldots,\pm k_{m+1}'
\]
for $m=1,\ldots,n-1$.
Note that
\[
\Delta(m;0)=\bigcup_{\sigma=0,\pm 1,\ldots,\pm k_m'} \Delta(m-1;\sigma).
\]

Then, for $m=2,\ldots,n$, it holds that
\begin{align*}
a_{j,m}^m
&=\int_{\Delta(m;0)} f_{j,m}^{\alpha(l)}(t;\tau,\theta, I,\phi) dt\\
&=
\int_{\Delta(m-1;0)} f_{j,m}^{\alpha(l)}(t;\tau,\theta, I,\phi) dt +\sum_{j=\pm 1,\ldots,\pm k'_m} \int_{\Delta(m-1; \sigma)} f_{j,m}^{\alpha(l)}(t;\tau,\theta, I,\phi) dt\\
&=a_{j,m}^{m-1}+\sum_{j=\pm 1,\ldots,\pm k'_m} \int_{\Delta(m-1; \sigma)} f_{j,m}^{\alpha(l)}(t;\tau,\theta, I,\phi) dt
\end{align*}

For the second term, by shifting the time variable $t=t'+\sigma T_{m-1}^{\alpha_{m-1}}$ and the periodicity of the perturbation $u$ in $(\theta,t)$, we have $T_{m-1}^{\alpha_{m-1}}\omega(I)\in2\pi \Zset^d$ and
\begin{align*}
& \int_{\Delta(m-1; \sigma)} f_{j,m}^{\alpha(l)}(t;\tau,\theta, I,\phi) dt\\
& = \int_{\Delta(m-1; 0)}  \D G_j\cdot u(z_{[m]}^{\alpha(l)}(t'+\sigma T_{m-1}^{\alpha_{m-1}},\tau),\omega(I)t'+\theta,I,t'+\phi,0)dt'
\end{align*}
Moreover, since $z_1^{\alpha_1}(t),\ldots,z_{m-1}^{\alpha_{m-1}}(t)$ are $T_{m-1}^{\alpha_{m-1}}$-periodic,
\begin{align*}
&z_{[m]}^{\alpha(l)}(t'+\sigma T_{m-1}^{\alpha_{m-1}},\tau)\\
&=(z_1^{\alpha_1(l)}(t'+\tau_1), \ldots, z_{m-1}^{\alpha_{m-1}(l)}(t'+\tau_{m-1}), z_m^{\alpha_m(l)}(t'+\sigma T_{m-1}^{\alpha_{m-1}}+\tau_m), z_{m+1}^*,\ldots,z_{n}^*)
\end{align*}
By  Lemma~\ref{lem:trapping_estimate} and $t'\in \Delta(m-1; 0)$,
we obtain
\begin{align*}
& \int_{\Delta(m-1; 0)}\D G_j\cdot u(z_{[m]}^{\alpha(l)}(t'+\sigma T_{m-1}^{\alpha_{m-1}},\tau),\omega(I)t'+\theta,I, t'+\phi,0)dt'\\
&= \int_{\Delta(m-1; 0)} \D G_j\cdot u(z_{[m-1]}^{\alpha(l)}(t',\tau),\omega(I)t'+\theta, I, t'+\phi,0)dt'+O(\mu)\\
&=a_{j,{m-1}}^{m-1}+O(\mu),
\end{align*}
here we employ the estimate using the arc length parameter as in Lemma~\ref{lem: periodic_to_homoclinic}.
Therefore, we deduce that
\[
a_{j,m}^m=a_{j,m}^{m-1}+(k_m-1)a_{j,m-1}^{m-1}+O(\mu).
\]

Repeating a similar computation for the first term of the right-hand side, we obtain
\begin{align*}
a_{j,m}^m=a_{j,m}^{1}+\sum_{i=1}^{m-1} (k_{i+1}-1)a_{j,i}^i+O(\mu),\qquad m=2,\ldots,n.
\end{align*}
Let
\[
S_{j,m'}:=\sum_{i=1}^{m'} (k_{i+1}-1)a_{j,i}^i
\]
for $m'=1,\ldots,n-1$.
It holds that $S_{j,1}=(k_2-1)a_{j,1}^1$.
Then we compute
\begin{align*}
S_{j,m}=(k_{m+1}-1)a_{j,m}^1+k_{m+1}S_{j,m-1}+O(\mu),\qquad m=2,\ldots,n
\end{align*}
since
\begin{align*}
S_{j,m}&=(k_{m+1}-1)a_{j,m}^{m}+\sum_{i=1}^{m-1} (k_{i+1}-1)a_{j,i}^i\\
&=(k_{m+1}-1)\left(a_{j,m}^1+\sum_{i=1}^{m-1} (k_{i+1}-1)a_{j,i}^i+O(\mu).\right)
+\sum_{i=1}^{m-1} (k_{i+1}-1)a_{j,i}^i\\
&=(k_{m+1}-1)a_{j,m}^1+\sum_{i=1}^{m-1} \{(k_{m+1}-1)(k_{i+1}-1)+(k_{i+1}-1)\}a_{j,i}^i+O(\mu)\\
&=(k_{m+1}-1)a_{j,m}^1+k_{m+1}S_{j,m-1}+O(\mu).
\end{align*}
Now, by the induction, we show that 
\[
S_{j,m}=\sum_{i=2}^{m+1}\frac{k_1k_2\cdots k_{m+1}}{k_1 k_2\cdots k_i}(k_{i}-1)a_{j,i-1}^1+O(\mu).
\]
for all $m=1,\ldots, n-1$. This already holds for $m=1$ and assume that the case $m=m'\in\{1,\ldots,n-2\}$ also holds.
Then
\begin{align*}
S_{j,m'+1}
&=(k_{m'+2}-1)a_{j,m'+1}^1+k_{m'+2}S_{j,m'}+O(\mu)\\
&=(k_{m'+2}-1)a_{j,m'+1}^1+k_{m'+2}\sum_{i=2}^{m'+1}\frac{k_1k_2\cdots k_{m'+1}}{k_1 k_2\cdots k_i}(k_i-1)a_{j,i-1}^1+O(\mu)\\
&=\frac{k_1k_2\cdots k_{m'+1}k_{m'+2}}{k_1k_2\cdots k_{m'+1}k_{m'+2}}(k_{m'+2}-1)a_{j,m'+1}^1\\
&\qquad+\sum_{i=2}^{m'+1}\frac{k_1k_2\cdots k_{m'+1} k_{m'+2}}{k_1 k_2\cdots k_i}(k_i-1)a_{j,i-1}^1+O(\mu)\\
&=\sum_{i=2}^{m'+2}\frac{k_1k_2\cdots k_{m'+2}}{k_1 k_2\cdots k_i}(k_i-1)a_{j,i-1}^1+O(\mu).
\end{align*}
Therefore, we have completed the induction step, and thus we conclude that
\begin{align*}
a_{j,m}^m&=a_{j,m}^{1}+S_{j,m-1}+O(\mu)\\
&=a_{j,m}^{1}+\sum_{i=2}^{m}\frac{k_1k_2\cdots k_{m}}{k_1 k_2\cdots k_i}(k_{i}-1)a_{j,i-1}^1+O(\mu).
\end{align*}
From Lemma~\ref{lem: periodic_to_homoclinic}, we obtain
\begin{align*}
\lim_{l\to+\infty}&a_{j,n}^n(l;\tau,\theta, I,\phi)=\int_{-\infty}^{\infty} f_j^{\h}(t;\tau,\theta, I,\phi) dt\\
&+\sum_{i=2}^{n}\frac{k_1k_2\cdots k_n}{k_1 k_2\cdots k_i}(k_i-1) 
\int_{-\infty}^{\infty} f_{j,i-1}(t;\tau,\theta, I,\phi) dt,
\end{align*}
which yields the desired result.
\end{proof}

\begin{lem}\label{lem:key_set}
Let $D$ be a key set for the set of analytic functions on an open subset $U$ in $\Rset^d$.
Suppose that for each $I\in D$ there exists a key set $K_I$ for the set of analytic functions on $(0,1)^n$.
Then $\bigcup_{I\in D} K_I\times\{I\}$ is a key set for the set of analytic functions on $(0,1)^n\times\Rset^n$.
\end{lem}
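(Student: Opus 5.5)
Note first that the product space in the statement should be read as $(0,1)^n\times U$ (the union $\bigcup_{I\in D}K_I\times\{I\}$ lives there), and we prove it in that form. The plan is a direct slicing argument: take an analytic function $g$ on $(0,1)^n\times U$ vanishing on $\bigcup_{I\in D}K_I\times\{I\}$, and show $g\equiv 0$ by first killing it on the slices over $D$ and then using the key set property of $D$ in the $I$-variable, applied to Taylor coefficients in the $\alpha$-variable.

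\emph{Step 1 (slices over $D$).} Fix $I\in D$. The map $\alpha\mapsto g(\alpha,I)$ is real-analytic on $(0,1)^n$, being the restriction of an analytic function to an analytic slice. It vanishes on $K_I$, and $K_I$ is a key set for $C^\omega((0,1)^n)$. Hence $g(\cdot,I)\equiv 0$ on $(0,1)^n$ for every $I\in D$.

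\emph{Step 2 (transfer to all $I$).} Fix an arbitrary $\alpha\in(0,1)^n$ and consider $h_\alpha(I):=g(\alpha,I)$, which is analytic on $U$. By Step 1, $h_\alpha$ vanishes on $D$. Since $D$ is a key set for $C^\omega(U)$, we get $h_\alpha\equiv 0$ on $U$. As $\alpha$ was arbitrary, $g\equiv0$ on $(0,1)^n\times U$, which is exactly the key set property.

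\emph{Main obstacle.} The only point requiring care is the regularity claim used in each step: that restrictions $g(\cdot,I)$ and $g(\alpha,\cdot)$ of a jointly real-analytic function are again real-analytic on the respective factors. This follows from local convergent power series in $(\alpha,I)$ by freezing one group of variables. No connectedness issue arises, because the key set hypotheses are applied on the full factors $(0,1)^n$ and $U$ as given. If one prefers to avoid slicing at fixed $\alpha$, an alternative for Step 2 is to expand $g$ locally as $\sum_\beta c_\beta(I)(\alpha-\alpha_0)^\beta$ and apply the key set property of $D$ to each analytic coefficient $c_\beta$. That route, however, only works locally and would need an identity-theorem/connectedness argument, so the pointwise version above is the one I would write.
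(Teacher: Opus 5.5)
Your proof is correct and is essentially the paper's argument: you use each $K_I$ to get vanishing on $(0,1)^n\times D$, then fix $\alpha$ and use that $D$ is a key set in $U$. The only difference is that the paper reads the ambient space as $(0,1)^n\times\Rset^d$ and adds a final identity-theorem step, passing from vanishing on the open set $(0,1)^n\times U$ to the whole connected space; that version follows immediately from yours.
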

\begin{proof}
Take any analytic function $\Phi,\Psi$ on $(0,1)^n\times\Rset^d$.
Assume that $\Phi=\Psi$ on the set $\bigcup_{I\in D} K_I\times\{I\}$.
Then for each $I\in D$, we have $\Phi(\alpha,I)=\Psi(\alpha,I)$ for all $\alpha\in K_I$.
Since $K_I$ is a key set for the set of analytic function on $(0,1)^n$,
it holds that $\Phi(\alpha,I)=\Psi(\alpha,I)$ on $(0,1)^n\times\{I\}$ for each $I\in D$.
Thus
\[
\Phi|_{(0,1)^n\times D}=\Psi|_{(0,1)^n\times D}
\]
holds. For fixed $\alpha\in(0,1)^n$, we obtain $\Phi(\alpha,I)=\Psi(\alpha,I)$ for all $I\in D$ and this implies that $\Phi|_{\{\alpha\}\times U}=\Psi|_{\{\alpha\}\times U}$ for each $\alpha\in (0,1)^n$ since $D$ is a key set of analytic functions on $U$.
Therefore, $\Phi$ coincides with $\Psi$ on the open subset $(0,1)^n\times U$ in $(0,1)^n\times \Rset^d$ and the identity theorem implies $\Phi=\Psi$ on $(0,1)^n\times \Rset^d$.
\end{proof}

\begin{proof}[Proof of Theorem~\ref{thm:Melnikov1}]
Pick $I'\in\Rset^d$ such that
\[
\dim_{\Rset}\mathrm{span}_{\Rset} (M(\Rset^n\times \Tset^d\times \{I=I'\}\times\Sset^1))=s.
\]
Then there exists $s$ points $(\tau_i,\theta_i,\phi_i)\in \Rset^n\times \Tset^d\times\Sset^1\ (i=1,\ldots, s)$ such that $v_i(I'):=M(\tau_i,\theta_i, I',\phi_i)\ (i=1,\ldots, s)$ are linearly independent.
Since $M$ is continuous, there exists an open neighborhood $U_{I'}$ of $I=I'$ such that
 $v_i(I)=M(\tau_i,\theta_i, I',\phi_i)\ (i=1,\ldots, s)$ are linearly independent for all $I\in U_{I'}$.
By the analyticity of $h$ and (M4), we can take an open subset $\widetilde{U}\subset U_{I'}$ such that $v_1(I),\ldots, v_{s}(I)$ are linearly independent and $\frac{\partial^2 h}{\partial I^2}(I)$ is non-degenerate for all $I\in \widetilde{U}$.

 By the density of rational points in $\Rset^d$ and the fact that the non-degeneracy of $\frac{\partial^2 h}{\partial I^2}(I)$ implies that $\nabla h$ is a local diffeomorphism on $\widetilde{U}$, we see that $\widetilde{U}$ contains a dense subset $D$ such that $\nabla{h}(I)$ is a rational point for all $I\in D$.
 Then, for each $I\in D$, there exist a positive integer $R_I$ such that $R_I\nabla{h}(I)\in\Zset^n$. We take the smallest one for $R_{I}$.

Fix $I\in D$. We now construct a key set $K_I$ for the set of analytic functions on $(0,1)^n$.
Consider $a_{j, n}^n(l;\tau,\theta, I,\phi)$ defined in the proof of Proposition~\ref{prop:higher_approximation} with arbitrary positive odd integers $k_1,\ldots,k_n,l$ and $l\in\Nset$ with $R=R_I$.
Let $w_i(l;I)=a_{i,n}^n(l;\tau_i,\theta_i,I,\phi_i)$ for $i=1,\ldots,s$.
Assume that $w_1(l;I),\ldots,w_s(l;I)$ are linearly dependent, i.e., 
\[
\xi_1 w_1(l;I)+\ldots+\xi_s w_s(l;I)=0
\]
for some constant $(\xi_1,\ldots,\xi_s)\neq (0,\ldots,0)$.
Take arbitrary small $\mu>0$. Then, for sufficiently large $l\in\Nset$, we have
\begin{align}\label{eq:linearly_dependent}
\sum_{j=1}^s\xi_j &\int_{-\infty}^{\infty} f^\h(t;\tau_j,\theta_j,I,\phi_j) dt\nonumber\\
&+\sum_{i=2}^{n}\frac{k_1k_2\cdots k_n}{k_1 k_2\cdots k_i}(k_i-1) 
\sum_{j=1}^s\xi_j\int_{-\infty}^{\infty} f_{[i-1]}^{\h}(t;\tau_j,\theta_j,I,\phi_j) dt=O(\mu)
\end{align}
from Proposition~\ref{prop:higher_approximation},
where we use $f^{\h}=(f_1^{\h},\ldots,f_n^{\h})$ and $f_{[i]}^{\h}=(f_{1,i}^{\h},\ldots,f_{n,i}^{\h})$ for $i=1,\ldots,n$.
Dividing both sides by $k_2$ and letting $k_2\to+\infty$, we have
\begin{align}\label{eq:linearly_dependent_1}
\sum_{j=1}^s\xi_j\int_{-\infty}^{\infty} f_{[1]}^{\h}(t;\tau_j,\theta_j,I,\phi_j) dt=0.
\end{align}
Substituting \eqref{eq:linearly_dependent_1} to \eqref{eq:linearly_dependent} and repeating similar procedures, we obtain
\[
\sum_{j=1}^s\xi_j\int_{-\infty}^{\infty} f_{[i]}^{\h}(t;\tau_j,\theta_j,I,\phi_j) dt=0,\quad j=1,\ldots,n.
\]
However, the case $i=n$ contradicts to the fact that $v_1(I)=M(\tau_1,\theta_1, I,\phi_1),\ldots, v_s(I)=M(\tau_s,\theta_s,I,\phi_s)$ are linearly independent.
As a result, for sufficiently large $k_1>k_2>k_3>\ldots>k_n$ and $l\in\Nset$,
we see that $w_1(l;I),\ldots,w_s(l;I)$ are linearly independent.
This implies that for each $I\in D$ there exists a key set $K_I$ in $(0,1)^n$ such that $w_1(l;I),\ldots,w_s(l;I)$ are linearly independent.

By Lemma~\ref{lem:key_set}, $\bigcup_{I\in D} K_I\times\{I\}$ is a key set in $(0,1)^n\times \Rset^d$ and, from the same discussion of Proposition~\ref{prop:sufficient_unbounded}, we deduce that for $(\alpha,I)\in \bigcup_{I\in D} K_I\times\{I\}$ the vector-valued map $\mathscr{A}_{(G,I)}^{(\alpha,I)}$ has at least $s$ unbounded directions.
Therefore, the set
\[
\left\{(\alpha,I)\in (0,1)^n\times \Rset^d; \mathscr{A}_{(G,I)}^{(\alpha,I)} \ \text{has}\  s\text{ unbounded directions}\right\}
\]
is a key set for the set of analytic functions on $(0,1)^n\times \Rset^d$,
which completes the proof by Theorem~\ref{thm:main1}.
\end{proof}

\begin{proof}[Proof of Theorem~\ref{thm:Melnikov2}]
The proof of Theorem~ is almost the same as that of Theorem~\ref{thm:Melnikov1}, thus we omit it.
\end{proof}

\section{Application to partially conformally symplectic Hamiltonian systems}
\label{sec:application_partially_conformal}

In this subsection, we consider nearly integrable, partially conformally symplectic Hamiltonian systems. 
For definition and properties, see Appendix \ref{sec:partially_conformal}.

Let $D\subset \Rset^m$ be a domain. Consider an analytic Hamiltonian $H_0(q,p):D\times\Rset^m\to \Rset$ of the form
	\begin{align}\label{eqn:mechanical_Hamiltonian}
		H_0(q,p)=\frac{1}{2}||p||^2+V(q),
	\end{align}
	where $V:D\to \Rset$ is a non-constant analytic function.
	We consider a perturbed vector field given by
	\begin{align}\label{eqn:dissipativity}
		X_\varepsilon=X_{H_0}+\varepsilon (X_{H^1}+Y),
	\end{align}
	where $H^1:D\times\Rset^m\to \Rset$ is analytic and $Y$ is of the form
	\begin{equation}
	Y(q,p)=\sum_{i=1}^m -\lambda_i p_i\frac{\partial}{\partial p_i}
	\end{equation}
	where  $\lambda_i \ge 0\ (i=1,\ldots,m)$ and $\lambda_{i_0}>0$ for some $i_0$.

	We now show the following result, which implies that
	dissipativity prevents the existence of a maximal number  of functionally independent first integrals for \eqref{eqn:dissipativity}
	(see \cite{M24} for the planar case).
	\begin{thm}\label{thm:dissipative_nonint}
		Under the condition (A1)', (A2) and (A3) with the unperturbed Hamiltonian $H_0$ given by Eq.~\eqref{eqn:mechanical_Hamiltonian}, for $|\epsilon|\neq0$, Eq.~\eqref{eqn:dissipativity} has at most
		$m-1$ functionally independent analytic first integrals which depend on $\varepsilon$ analytically.
	\end{thm}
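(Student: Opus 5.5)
The plan is to apply Theorem~\ref{thm:main1} with $s=1$. Here $n=2m$, $q=m$, $n-q=m$, and $X^1=X_{H^1}+Y$. Since $s=1$ gives the bound $n-q-s=m-1$, it suffices to show that $\mathscr{Q}_1$ is a key set for $C^\omega(U)$. The natural Melnikov component to examine is the one attached to $F_1=H_0$. For this component the Hamiltonian part of the perturbation contributes only a bounded term, by the computation following Corollary~\ref{cor:Hsys}:
\[
\int_0^T X_{H^1}(H_0)_{\gamma_\tau^c(t)}\,dt=H^1(\gamma_\tau^c(0))-H^1(\gamma_\tau^c(T)),
\]
which is bounded because $F^{-1}(c)$ is compact. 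Hence, up to a bounded function of $T$,
\[
\mathscr{A}_{H_0}^c(\tau,T)\sim\int_0^T dH_0(Y)_{\gamma_\tau^c(t)}\,dt=-\int_0^T\sum_{i=1}^m\lambda_i\,p_i(\gamma_\tau^c(t))^2\,dt .
\]

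The key step is to show that this integral is unbounded in $T$ for every $c\in U$ and every $\tau$. The integrand is nonpositive and is at most $-\lambda_{i_0}p_{i_0}^2\le 0$, so the integral is monotone in $T$. It therefore suffices to show that $p_{i_0}$ does not vanish identically along the quasi-periodic orbit $\gamma_\tau^c$. Along the unperturbed flow we have $\dot q_{i_0}=p_{i_0}$ and $\dot p_{i_0}=-\partial V/\partial q_{i_0}$. If $p_{i_0}\equiv0$ on $\gamma_\tau^c$, then $q_{i_0}$ is constant and $\partial_{q_{i_0}}V\equiv0$ along the orbit. The orbit is dense in the torus for $c$ in a dense set $D$ by (A3'), so $p_{i_0}$ would vanish on the whole torus $F^{-1}(c)$. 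If this happened for all $c$ in an open set, $p_{i_0}$ would vanish on an open subset of phase space, which is absurd. By analyticity, the set of $c$ for which $p_{i_0}$ is not identically zero on $F^{-1}(c)$ is then the complement of a proper analytic subset, and so contains an open dense set. For such $c$ the average of $p_{i_0}^2$ over the torus is strictly positive. Choosing $\tau$ generic, or using unique ergodicity on the dense set where the frequencies are nonresonant, the time average of $p_{i_0}^2(\gamma^c_\tau(t))$ is positive. Consequently $\mathscr{A}_{H_0}^c(\tau,T)\le -\kappa T+C$ with $\kappa>0$, and in particular it is unbounded.

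It is actually simpler to avoid ergodicity. Since $p_{i_0}\not\equiv0$ on the torus, it is nonzero on an open subset of the torus. The orbit through a suitable $\tau$ visits this open set periodically in time, by almost-periodicity of the linear flow on $\Tset^m$. Each visit contributes a definite negative amount, which already gives unboundedness. This shows $\mathscr{Q}_1$ contains an open dense subset of $U$, which is in particular a key set. Theorem~\ref{thm:main1} with $s=1$ then gives at most $m-1$ functionally independent analytic first integrals depending analytically on $\varepsilon$.

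The main obstacle is the nondegeneracy claim that $p_{i_0}$ is not identically zero on generic invariant tori $F^{-1}(c)$. The argument above reduces this to the fact that the union of the tori fills an open set in phase space, on which the coordinate function $p_{i_0}$ cannot vanish. This uses only (A2) and the diffeomorphism $\varphi$ of Proposition~\ref{LAJ}.
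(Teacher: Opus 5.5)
Your proof is correct and follows the paper's skeleton. Both apply Theorem~\ref{thm:main1} with $s=1$, both observe that the $X_{H^1}$-contribution to $\mathscr{A}_{H_0}^c$ is bounded, and both show that $\int_0^T p_{i_0}^2(\gamma_\tau^c(t))\,dt\to\infty$ by recurrence of the linear flow. The paper makes your ``definite amount per visit'' precise with a Lipschitz bound on $p_{i_0}^2$ along the flow.

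Where you differ is the nondegeneracy step. The paper works only with $c$ for which $\omega(c)$ is rationally independent. It uses density of each orbit to reduce to the claim that $p_{i_0}$ cannot vanish identically on such a torus, and so gets unboundedness for \emph{every} $\tau$. It justifies that claim by saying that $p_{i_0}\equiv 0$ and $q_{i_0}\equiv\mathrm{const}$ on the torus would contradict $\varphi$ being a diffeomorphism.

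You instead show that the set $Z$ of actions whose torus lies in $\{p_{i_0}=0\}$ is closed with empty interior. Then you choose $\tau$ at a point where $p_{i_0}\neq 0$, which is all that membership in $\mathscr{Q}_1$ requires. Your route needs neither nonresonance nor density of orbits, and it covers resonant tori as well.

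It is also more self-contained. For $m\ge 2$, two rows of the Jacobian having vanishing $\theta$-block does not by itself contradict invertibility of $d\varphi$. The paper's claim really needs an extra input: the tori are Lagrangian, so an $m$-dimensional one cannot lie in the $(2m-2)$-dimensional symplectic slice $\{q_{i_0}=\mathrm{const},\ p_{i_0}=0\}$.

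Small points on your write-up:
\begin{enumerate}
\item Analyticity is not needed to see that $U\setminus Z$ is open dense; continuity suffices.
\item The ergodic-average detour can be dropped.
\item ``Periodically'' should read ``recurrently, with relatively dense return times''.
\end{enumerate}
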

	\begin{proof}
        Assume that $\lambda_{i_0}>0$ for some $i_0\in\{1,\ldots,m\}$. 
		We can apply Theorem~\ref{thm:main1} to this case and
		we need to check that $\mathscr{Q}_1$ is a key set for $C^\omega(U)$.

		Take any  regular value $c\in U$ of $H_0$ such that $\omega(c)$ is a rationally independent vector. 

  Let $\mathcal{J}_c$ be the invariant torus of the unperturbed integrable system corresponding to the level $c\in  D_{\mathrm{N}}$.
  Since  $\omega(c)$ is a rationally independent vector, each orbit  $\{\gamma_\tau^c\}_{\tau\in\Tset^m}$ is dense in $\mathcal{J}_c$.
		
		Fix $(\tau,c)\in \Tset^m\times  D_{\mathrm{N}}$. Computing $\mathscr{A}_{H_0}^c$, we see that
		\begin{align*}
			\mathscr{A}_{H_0}^c(\tau,T)&=\int_0^T \big(X_{H^1}(H_0)+Y(H_0)\big)_{\gamma_\tau^c(t)}dt\\
			&=H^1(\gamma_\tau^c(0))-H^1(\gamma_\tau^c(T))-\sum_{i=1}^m \lambda_i \int_0^T p_i^2 (\gamma_\tau^c(t))dt\\
			&\le H^1(\gamma_\tau^c(0))-\max_{z\in \mathcal{J}_c} H^1(z)
			-\lambda_{i_0}\int_0^T  p^2_{i_0}(\gamma_\tau^c(t)) dt.
		\end{align*}

If $p_{i_0}(\gamma_\tau^c(t))$ is identically zero  in $t$, from the density of $\gamma_\tau^c$ it follows that $p_{i_0}$ is identically zero on $\mathcal{J}_c$.
Since $\dot q_{i_0}(\gamma_\tau^c(t))=p_{i_0}(\gamma_\tau^c(t))=0$, it follows that $q_{i_0} =\textrm{const}$ on $\mathcal{J}_c$.
This implies that $\dfrac{\partial p_{i_0}}{\partial \theta}=\dfrac{\partial q_{i_0}}{\partial \theta}=0$ on $\mathcal{J}_c$, which contradicts the fact that
$(I,\theta)\mapsto \phi(I,\theta)=(p,q)$ is a diffeomorphism. 

Therefore, it holds that $ p_{i_0} ^2(\gamma_\tau^c(t))$ is not identically zero as a function on $t$,
		 i.e., there exists $T^*\in\Rset$ such that
		 	$\delta:=p_{i_0}^2(\gamma_\tau^c(T^*))>0$.
		 From the quasi-periodicity of $\gamma_\tau^c$ and continuity of the function $p_{i_0}^2$, there is a monotonically increasing sequence $\{T_j\}_{j\in\Nset}\subset\Rset$ such that $p^2_{i_0}(\gamma_\tau^c(T_j))>\delta/2$ and $\lim_{j\to\infty} T_j=+\infty$. 
Taking a subsequence we can assume that $T_{j}-T_{j-1}>2$ for all $j\in\Nset$ with $T_0=0$.
		 Since
		 \begin{align*}
		 	\frac{d}{dt}p^2_{i_0}(\gamma_\tau^c(t))= -2p_{i_0}(\gamma_\tau^c(t))\frac{\partial V}{\partial q_{i_0}}(\gamma_\tau^c(t)),
		 \end{align*}
		 we have
		 \[
		 \left|	p_{i_0}^2(\gamma_\tau^c(T_j+t))-p_{i_0}^2(\gamma_\tau^c(T_j))\right|\le M|t|,
		 \]
		 where $M=\max\{|2 p_{i_0}\frac{\partial V}{\partial q_{i_0}}|; (q,p)\in \mathcal{J}_c\}$.
		 Hence for $|t|\le\frac{\delta}{4M}$ we obtain
		 \[
		 p^2_{i_0}(\gamma_\tau^c(T_j+t))\ge p^2_{i_0}(\gamma_\tau^c(T_j))-M|t|\ge \delta/4.
		 \]
		 Letting $\delta'=\min\{\frac{\delta}{4M}, 1\}$, we see that $T_{j-1}+1, T_{j+1}-1\notin[T_j-\delta',T_j+\delta' ]$ for all $j$.
		 Then
		 \[
		 \lim_{T\to +\infty}\int_0^T p^2_{i_0}(\gamma_\tau^c(t)) dt
		 \ge \sum_{j=1}^\infty \int_{T_j-\delta'}^{T_j+\delta'} p ^2_{i_0}(\gamma_\tau^c(t)) dt\ge \sum_{j=1}^\infty 2\delta'\cdot \frac{\delta}{4}=+\infty.
		 \]
		 Therefore, $\mathscr{A}_{H_0}^c(\tau,T)$ is unbounded in $T$ for each $(\tau,c)\in \Tset^m\times U^d$, which implies the dense set  $U^d$ is included in $\mathscr{Q}_1$ and thus $\mathscr{Q}_1$ is a key set for $C^\omega(U)$.
	\end{proof}

\begin{rmk}
Note that the Hamiltonian part of the perturbation $H^1$ plays no role in Theorem \ref{thm:dissipative_nonint}, so the result remains valid for $H^1=0$. 
\end{rmk}

\begin{rmk}
The result in Theorem \ref{thm:dissipative_nonint} remains true for dissipative vector fields of the form
 \begin{equation}
	Y(q,p)=\sum_{i=1}^m -\lambda_i p_i\|p\|^{n-1}\frac{\partial}{\partial p_i}, \quad n\ge 1,
	\end{equation}
which include, as a particular case for $n=2$, the quadratic drag force (as in the case of falling objects through air or high-speed motion in fluids in the turbulent regime).
\end{rmk}

\section{Application  to systems of pendulums}

\label{sec:systems_penduli}
\subsection{Coupled pendulums with a quasi-periodic external force}

Consider the system 
\begin{equation}\label{eqn:pendulums}
\begin{alignedat}{2}
\dot{q}_1 &= p_1, \qquad &
\dot{p}_1 &= -\sin q_1-\varepsilon\alpha(q_1-q_2)
+\varepsilon\delta_1 p_1+\varepsilon\mu_1\cos \phi,\\
\dot{q}_2 &= p_2, \qquad &
\dot{p}_2 &= -\sin q_2+\varepsilon\alpha(q_1-q_2)
+\varepsilon\delta_2 p_2+\varepsilon\mu_2\cos \theta,\\
\dot{\phi} &=  \nu, \qquad & &\\
\dot{\theta} &= I, \qquad &
\dot{I} &= 0.
\end{alignedat}
\end{equation}
with variables $(q_i,p_i)\in\Rset^2\ (i=1,2)$, $\phi\in\Sset^1=\Rset/(2\pi\Zset)$, and
$(\theta,I)\in \Sset^1\times (0,\infty)$,
where $\alpha,\delta_i,\mu_i, \nu \in\Rset\  (i=1,2)$ are constants with $ \nu >0$ and $\varepsilon\in(-\epsilon_0,\epsilon_0)$ is a perturbation parameter with some $\epsilon_0>0$.
Note that in Eq.~\eqref{eqn:pendulums} the terms $\delta_ip_i\ (i=1,2)$ and $\mu_1\cos\phi, \mu_2\cos\theta$ correspond to dissipation and  external forcing respectively, and $\alpha(q_1-q_2)$ represents the interaction of two pendulums.
We also note that \eqref{eqn:pendulums} has a trivial first integral $I$.

When $\varepsilon=0$, Eq.~\eqref{eqn:pendulums} becomes a system given by
\begin{equation}\label{eqn:pendulums_linear}
\begin{alignedat}{2}
\dot{q}_1 &= p_1, \qquad &
\dot{p}_1 &= -\sin q_1,\\
\dot{q}_2 &= p_2, \qquad &
\dot{p}_2 &= -\sin q_2,\\
\dot{\phi} &=  \nu, \qquad & &\\
\dot{\theta} &= I, \qquad &
\dot{I} &= 0.
\end{alignedat}
\end{equation}
The unperturbed system \eqref{eqn:pendulums_linear} has two first integrals
\begin{align*}
F_i=\frac{p_i^2}{2}+(1-\cos q_i),\quad i=1,2,
\end{align*}
and a family of quasi-periodic orbits $\gamma_\tau^c(t)=(q_1(t+\tau_1;c_1),p_1(t+\tau_1;c_1),q_2(t+\tau_2;c_2),p_2(t+\tau_2;c_2), \nu t+\tau_3,c_3 t+\tau_4, c_3)$ with 
\begin{align*}
	(q_i(t;c_i), p_i(t;c_i))=\left(2\arcsin\left(\sqrt{\frac{c_i}{2}} \sn \left(t;\sqrt{\frac{c_i}{2}}\right)\right), 2\sqrt{\frac{c_i}{2}}\cn\left(t;\sqrt{\frac{c_i}{2}}\right)\right),
	\end{align*}
	with $i=1,2$ for $ c=(c_1,c_2,c_3)\in(0,2)^2\times(0,\infty)$ and $ \tau=(\tau_1,\tau_2,\tau_3,\tau_4)\in\Tset^4=(\Rset/2\pi\Zset)^4$,
	where $\sn(\cdot;k)$ and $\cn(\cdot;k)$ represent the Jacobi elliptic functions with the elliptic modulus $k$.
	Note that $(q_i(t;c_i),p_i(t;c_i))$ is periodic with period $T_i(c_i)=4K(\sqrt{\frac{c_i}{2}})$ for $i=1,2$ and $c_i\in(0,2)$, where $K$ is the complete elliptic integral of the first kind.
	Then the corresponding frequency of the periodic orbit $(q_i(t;c_i),p_i(t;c_i))$ for $i=1,2$ is given by
	\[
	\omega_i(c_i)=\frac{2\pi}{T_i(c_i)}=\frac{\pi}{2K(\sqrt{\frac{c_i}{2}})}.
	\]

	Now we  compute
\[
\mathscr{A}_{F_i}^c(\tau,T)=\int_0^T dF_i(X^1)_{\gamma_\tau^c(t)}dt
\]
for $i=1,2$ with
\begin{align*}
X^1&=\{\alpha(q_2-q_1)+\delta_1 p_1+\mu_1 \cos  \phi \}\frac{\partial}{\partial p_1}\\
&\qquad+
\{\alpha(q_1-q_2)+\delta_2 p_2+\mu_2 \cos \theta\}\frac{\partial}{\partial p_2}.
\end{align*}

We describe how to compute $\mathscr{A}_{F_i}^c(\tau,T)$ only for $i=1$.
The case $i=2$ can be computed in a similar way.
 Note that $\mathscr{A}_{I}^c(\tau,T)$ is identically zero on $(\tau,T)$. 

Since $dF_1=\sin q_1dq_1+p_1dp_1$, we have
\begin{align*}
	&\mathscr{A}_{F_1}^c(\tau,T)=\int_0^T p_1\{\alpha(q_2-q_1)+\delta_1p_1+\mu_1 \cos\phi\}_{\gamma_\tau^c(t)}dt\\
	&=\alpha\int_0^T p_1(t+\tau_1;c_1)q_2(t+\tau_2;c_2)dt-\alpha\int_0^T p_1(t+\tau_1;c_1)q_1(t+\tau_1;c_1)dt\\
	&\quad+\delta_1\int_0^T \{p_1(t+\tau_1;c_1)\}^2 dt 
	+\mu_1 \int_0^T p_1(t+\tau_1;c_1)\cos ( \nu_1t+\tau_{3})dt.
\end{align*}
We now consider each term of $\mathscr{A}_{F_1}^c(\tau,T)$.
We first compute 
\begin{align*}
\int_0^T p_1(t+\tau_1;c_1)q_1(t+\tau_1;c_1)dt
=\int_0^T \frac{d q_1(t+\tau_1;c_1)}{dt} q_1(t+\tau_1;c_1)dt\\
=\frac{1}{2}[q_1(t+\tau_1;c_1)^2]_0^T= \frac{1}{2}\left(q_1(T+\tau_1;c_1)^2-q_1(\tau_1;c_1)^2\right),
\end{align*}
which is always bounded in $T$ for every $ (c,\tau)\in(0,2)^2\times(0,\infty)\times \Tset^4$ by the periodicity of $q_1(t;c_1)$.

Next, we consider the third term $\delta_1\int_0^T \{p_1(t+\tau_1;c_1)\}^2 dt$.
Let $D_1(c_1)$ be a domain in the $(q_1,p_1)$-plane
whose boundary is $z_1(\cdot;c_1)=(q_1(\cdot;c_1), p_1(\cdot;c_1))$ for $c_1\in (0,2)$. Then
\begin{align*}
\int_0^{T_1(c_1)} \{p_1(t+\tau_1;c_1)\}^2 dt
&=\int_0^{T_1(c_1)} p_1(t+\tau_1;c_1) \frac{d q_1(t+\tau_1;c_1)}{dt} dt\\
&=\int_{z_1(\cdot;c_1)} p_1 dq_1=\int_{D_1(c_1)} dp_1\wedge dq_1=\mathrm{Area}(D_1(c_1)),
\end{align*}
where $\mathrm{Area}(\cdot)$ stands for an area in $\Rset^2$.
Hence, for $T=l T_1(c_1)+T'$ with $l\in\Nset$ and $T'\in [0,T_1(c_1))$, we obtain
\[
	\int_0^{T} \{p_1(t+\tau_1;c_1)\}^2 dt=l\mathrm{Area}(D_1(c_1))+\int_0^{T'} \{p_1(t+\tau_1;c_1)\}^2 dt.
\]
From the monotonicity of $\int_0^{T} \{p_1(t+\tau_1;c_1)\}^2 dt$ with respect to $T$, 
we see that
\[
\left|\int_0^{T} \{p_1(t+\tau_1;c_1)\}^2 dt- \frac{\mathrm{Area}(D_1(c_1))}{T_1(c_1)}T\right|\le \mathrm{Area}(D_1(c_1))
\quad \text{for all}\ T>0,
\]
which implies that
\[
\int_0^{T} \{p_1(t+\tau_1;c_1)\}^2 dt\quad\sim\quad \frac{\mathrm{Area}(D_1(c_1))}{T_1(c_1)}T,
\]
where $\sim$ represents an equivalence relation which we introduced after Definition~\ref{dfn:unbdd_A}.

We compute the fourth term $\int_0^T p_1(t+\tau_1;c_1)\cos ( \nu_1t+\tau_{3})dt$.
Letting $u=t+\tau_1$, we have
\begin{align*}
	\int_0^T &p_1(t+\tau_1;c_1)\cos (\nu t+\tau_{3})dt=\int_{\tau_1}^{T+\tau_1} p_1(u;c_1)\cos ( \nu u+\tau_{3}-\nu \tau_1)du\\
	&= \int_{\tau_1}^{T+\tau_1} p_1(u;c_1)\cos (\nu u)du\cdot \cos(\tau_{3}-\nu \tau_1)
	\\&\qquad-\int_{\tau_1}^{T+\tau_1} p_1(u;c_1)\sin (\nu u)du\cdot \sin(\tau_{3}- \nu \tau_1)
\end{align*}
The Fourier expansion of $p_1(u;c_1)$ is given by
\begin{align*}
p_1(u;c_1)=2\sqrt{\frac{c_1}{2}}\cn\left(t;\sqrt{\frac{c_1}{2}}\right)
=\frac{4\pi}{K(\sqrt{\frac{c_1}{2}})}\sum_{l=0}^\infty \frac{ {\rho_1}^{l+1}}{1+ {\rho_1}^{2l+1}}
\cos\frac{(2l+1)\pi u}{2K(\sqrt{\frac{c_1}{2}})},
\end{align*}
where $ {\rho_i=\mathrm{exp}\left(\frac{-\pi K(\sqrt{1-c_i/2})}{K(\sqrt{c_i/2}))}\right)}$ is the elliptic nome.
Hence we obtain
\begin{align*}
\int_0^T &p_1(t+\tau_1;c_1)\cos (\nu_1 t+\tau_{3})dt
\\
&\sim 
\begin{cases}
\frac{2\pi  {\rho_1}^{l+1}T}{(1+ {\rho_1}^{2l+1})K(\sqrt{\frac{c_1}{2}})} \cos(\tau_{3}- \nu \tau_1)
& \text{if  {$\frac{\nu}{\omega(c_1)}=(2l+1)$} for some $l\in\Nset_0$}, \\
    0                 & \text{otherwise}. 
\end{cases}
\end{align*}

We now turn to estimate the first term $\alpha\int_0^T p_1(t+\tau_1;c_1)q_2(t+\tau_2;c_2)dt$.
It is difficult to obtain its explicit formula, so we borrow the idea presented in \cite{VH85}.
Let
\[
M^c(t_0;T)=\int_0^T p_1(t;c_1)q_2(t+t_0;c_2)dt.
\]
Then $M^c(t_0,T)$ is $T_2(c_2)$-periodic in $t_0$ for every $T>0$.
By the smoothness of $q_2$, we obtain
\[
\frac{\partial M^c}{\partial t_0}(t_0;T)=\int_0^T p_1(t;c_1)p_2(t+t_0;c_2)dt.
\]
Using the Fourier expansions of $p_i(t;c_i)$ for $i=1,2$, we obtain
\[
\frac{\partial M^c}{\partial t_0}(0;T)=\frac{8\pi^2 {\rho_1}^{m+1} {\rho_2}^{j+1}}{K(\sqrt{\frac{c_1}{2}})K(\sqrt{\frac{c_2}{2}})(1+ {\rho_1}^{2m+1})(1+ {\rho_2}^{2j+1})}T+\text{(bounded)}
\]
if
 $ {\omega(c_2)/\omega(c_1)}=(2m+1)/(2j+1)$
 for some $m,j\in\Nset_0$.
Moreover,
\[
M^c(0;4K(\sqrt{c_1/2})K(\sqrt{c_2/2}))=0
\]
 if $ {\omega(c_2)/\omega(c_1)}=(2m+1)/(2j+1)$ for some $m,j\in\Nset_0$ since $p_1$ is an even function and $q_2$ is an odd one.
Therefore, $M^c(t_0;T)$ is unbounded in $T$ and neither constant in $t_0$ nor of the form $a T+\text{(bounded)}$ with some constant $a$.

We introduce
\begin{align*}
V_0&=\left\{c=(c_1,c_2,c_3)\in(0,2)^2\times (0,\infty) ;\   {\frac{\omega(c_2)}{\omega(c_1)}}=\frac{2m+1}{2j+1}\ \text{for some} \ m,j\in\Nset_0\right\},\\
V_1&=\left\{\ c=(c_1,c_2,c_3)\in(0,2)^2\times (0,\infty) ; \  {\frac{\ \nu}{\omega(c_1)}=2l_1+1}\ \text{for some} \ l_1\in\Nset_0\right\},\\
V_2&=\left\{c=(c_1,c_2,c_3)\in(0,2)^2\times (0,\infty) ; \  {\frac{c_3}{\omega(c_2)}=2l_2+1}\ \text{for some} \ l_2\in\Nset_0\right\}.
\end{align*}
Note that $V_0,V_1,V_2$ are key sets.
Then, we see that
\begin{align*}
\mathscr{A}_{F_1}^c(\tau,T)\sim \alpha M^c(\tau_2-\tau_1;T){\bf{1}}_{V_0}(c)+\delta_1 \frac{\mathrm{Area}(D_1(c_1))}{T_1(c_1)}T+\mu_1\frac{2\pi  {\rho_1}^{l+1}T\cos(\tau_{3}-\nu\tau_1)}{(1+ {\rho_1}^{2l+1})K(\sqrt{\frac{c_1}{2}})} {\bf{1}}_{V_1}(c),\\
\mathscr{A}_{F_2}^c(\tau,T)\sim -\alpha M^c(\tau_2-\tau_1;T){\bf{1}}_{V_0}(c)+\delta_2 \frac{\mathrm{Area}(D_2(c_2))}{T_2(c_2)}T+\mu_2\frac{2\pi  {\rho_2}^{l+1}T\cos(\tau_{4}-c_3\tau_1)}{(1+ {\rho_2}^{2l+1})K(\sqrt{\frac{c_2}{2}})} {\bf{1}}_{V_2}(c),
 \end{align*}
where ${\bf{1}}_{V}(c)$ is the indicator function of a subset $V\subset (0,2)^2\times(0,\infty)$.
It is easy to see that if one of $\alpha,\delta_i,\mu_i \ (i=1,2)$ is not zero, then, for one of $j=0,1,2$, it holds that $\mathscr{A}_{F}^c$ has at least 1 unbounded direction for $c\in V_i$.
When $\alpha\neq 0$ and one of $\delta_i,\mu_i \ (i=1,2)$ is not zero, then $\mathscr{A}_{F}^c$ has 2 unbounded directions on at least one of $V_0\cap  V_1$ or $V_0\cap V_2$, which are key sets.
When $\alpha=0$ and one of $\mu_1\mu_2,\delta_1\mu_2,\delta_2\mu_1$ is not zero, then $\mathscr{A}_{F}^c$ has 2 unbounded directions for $c\in V_1\cap V_2$.
Using Theorem~\ref{thm:main1} and the fact that $I$ is the first integral of \eqref{eqn:pendulums}, we obtain the upper bound of the numbers of first integrals of \eqref{eqn:pendulums}.
\begin{prop}\label{prop:fi_pendla}
	For \eqref{eqn:pendulums}, we have the following:
	\begin{itemize}
		\item[(i)] If one of $\alpha,\delta_i,\mu_i (i=1,2)$ is not zero, then Eq.~ \eqref{eqn:pendulums} has at most $1$
	real-analytic non-constant first integral in $(q_1,p_1,q_2,p_2,\phi,\varepsilon)$  which is functionally independent of the trivial first integral $I$.
		\item[(ii)] If $\alpha\neq 0$ and one of $\delta_i,\mu_i (i=1,2)$ is not zero, then Eq.~ \eqref{eqn:pendulums} has no
	real-analytic non-constant first integral in $(q_1,p_1,q_2,p_2,\phi,\varepsilon)$  which is functionally independent of the trivial first integral $I$.
	\item[(iii)] If $\alpha=0$ and one of $\mu_1\mu_2,\delta_1\mu_2,\delta_2\mu_1$ is not zero, then Eq.~ \eqref{eqn:pendulums} has no
	real-analytic non-constant first integral in $(q_1,p_1,q_2,p_2,\phi,\varepsilon)$  which is functionally independent of the trivial first integral $I$.
	\end{itemize}
\end{prop}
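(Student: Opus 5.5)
The plan is to apply Theorem~\ref{thm:main1} to \eqref{eqn:pendulums}. The phase space has dimension $n=7$ and the unperturbed torus dimension is $q=4$ (angles $\tau_1,\dots,\tau_4$). The $n-q=3$ first integrals are $F_1,F_2,I$. Assumptions (A1) and (A2) hold on $(0,2)^2\times(0,\infty)$ by the explicit elliptic-function parametrization. For (A3) we use Remark~\ref{rmk:(A3)}: the frequency vector $\omega(c)=(\omega_1(c_1),\omega_2(c_2),\nu,c_3)$ satisfies $\omega_i'(c_i)\neq0$, since $K$ is strictly increasing. Hence four values of $c$ can be chosen with $\omega(c^{(1)}),\dots,\omega(c^{(4)})$ linearly independent, which gives (A3).

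Since $I$ is an exact first integral of the perturbed system, $\mathscr{A}_I^c\equiv0$. So the Melnikov vector $\mathscr{A}_F^c=(\mathscr{A}_{F_1}^c,\mathscr{A}_{F_2}^c,0)$ has at most $2$ unbounded directions. The whole count therefore rests on the asymptotics (modulo bounded functions, the relation $\sim$) of $\mathscr{A}_{F_1}^c$ and $\mathscr{A}_{F_2}^c$ derived just before the statement.

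For case (i), I would show that $\mathscr{Q}_1$ contains one of the key sets $V_0$, $V_1$, $V_2$, or the full open set $U$.
\begin{itemize}
\item If $\delta_1\neq0$, the term $\delta_1\,\mathrm{Area}(D_1(c_1))T/T_1(c_1)$ grows linearly for every $c$. It can be cancelled only on $V_0$ by the $M^c$ term, or on $V_1$ by the forcing term. Away from $V_0\cup V_1$, which is an open dense set, it is unbounded, so $\mathscr{Q}_1$ contains a dense set.
\item If $\mu_1\neq0$, the resonant term is unbounded for $\tau$ with $\cos(\tau_3-\nu\tau_1)\neq0$ on $V_1$. On $V_1$ the linear-in-$T$ area term is independent of $\tau$, and the $M^c$ term lives only on $V_0$. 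I would restrict to $V_1\setminus V_0$, which is still a key set because $V_0$ is countable in the $c_1$--$c_2$ ratio, and vary $\tau$ to avoid cancellation.
\item If $\alpha\neq0$, use that $M^c(t_0;T)$ is not of the form $aT+\text{bounded}$ on $V_0$.
\item The cases $\delta_2$ and $\mu_2$ are symmetric.
\end{itemize}
Theorem~\ref{thm:main1} with $s=1$ then leaves at most $3-1=2$ first integrals, one of which is $I$. This yields (i).

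For cases (ii) and (iii), I would show that $\mathscr{Q}_2$ is a key set. It is enough that no nontrivial combination $a_1\mathscr{A}_{F_1}^c+a_2\mathscr{A}_{F_2}^c$ is bounded for all $\tau$.
\begin{itemize}
\item In case (ii), work on $V_0\cap V_1$ or $V_0\cap V_2$. The $M^c(\tau_2-\tau_1;T)$ term depends on $\tau_2-\tau_1$ in a non-affine way. The $V_1$ forcing term depends on $\tau_3$, which does not appear in $\mathscr{A}_{F_2}^c$. The area terms are $\tau$-independent multiples of $T$.
\item Hence boundedness of the combination for all $\tau$ forces $a_2$ times the $\tau_3$-dependence to vanish and $(a_1-a_2)\alpha M^c$ to have no $\tau$-dependence. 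This gives $a_1=a_2$, and then the remaining $\tau_3$ or area terms give $a_1=0$.
\item Case (iii) is analogous on $V_1\cap V_2$. There $\tau_3$ and $\tau_4$ separate the two forcing terms, or one forcing term is separated against the other component's area term.
\end{itemize}
Theorem~\ref{thm:main1} with $s=2$ then gives at most one first integral, which must be $I$.

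The main obstacle is verifying that $V_0\cap V_1$, $V_0\cap V_2$ and $V_1\cap V_2$ are key sets, and that removing countably many exceptional resonance values keeps them so. These sets are dense in suitable two-dimensional slices, since they impose one resonance condition in $c_1$ (or $c_2$) and a ratio condition. I would argue this via density of rational frequency ratios together with the monotonicity of $\omega_i$. The argument would parallel Lemma~\ref{lem:key_set}: use density in the slice, then analyticity in the remaining variable $c_3$ or $c_2$.
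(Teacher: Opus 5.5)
Your plan is the paper's own argument: Theorem~\ref{thm:main1} with $n-q=3$ and $\mathscr{A}_I^c\equiv0$, using $s=1$ for (i) and $s=2$ for (ii)--(iii), fed by the asymptotics of $\mathscr{A}_{F_1}^c,\mathscr{A}_{F_2}^c$ on $V_0,V_1,V_2$. The paper only asserts that $V_0,V_1,V_2$ and $V_0\cap V_1$, $V_0\cap V_2$, $V_1\cap V_2$ are key sets. The step you single out as the main obstacle is exactly where this breaks: apart from $V_0$ (which is dense), none of these sets is a key set in the sense of Definition~\ref{dfn:key}.

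Since $\omega(c_i)=\pi/(2K(\sqrt{c_i/2}))$ decreases strictly from $1$ to $0$ on $(0,2)$, $V_1$ is the union of the planes $\{c_1=c_1^{(l)}\}$ with $\nu/\omega(c_1^{(l)})=2l+1$. These values satisfy $c_1^{(l)}\to2$, a boundary point of $U$. The function $\cos\bigl(\pi\nu/(2\omega(c_1))\bigr)$ is analytic on $U$, vanishes exactly on $V_1$, and is not identically zero. Likewise $\cos\bigl(\pi c_3/(2\omega(c_2))\bigr)$ vanishes on $V_2$; for fixed $c_3$ the resonant $c_2$ accumulate only at $c_2=2$. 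Hence $V_1\setminus V_0$, $V_0\cap V_1$, $V_0\cap V_2$ and $V_1\cap V_2$ are not key sets. In your ``dense in a slice, then analyticity in the remaining variable'' argument the second step fails, because the remaining variable is pinned to a discrete set with no accumulation point in the domain.

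This is not cosmetic. If $\mu_1$ is the only nonzero parameter, then $\mathscr{A}_{F_2}^c\equiv0$ and $\mathscr{Q}_1=V_1$ exactly. Take $\psi(c)=(\Psi(c_1),c_2,c_3)$ with $\Psi'(c_1)=\cos\bigl(\pi\nu/(2\omega(c_1))\bigr)$. Then $d\psi\,\mathscr{A}_F^c$ is bounded for every $c\in U$ while $\det d\psi\neq0$ almost everywhere, so the contradiction in the proof of Theorem~\ref{thm:main1} never arises. The same defect affects (ii) when $\delta_1=\delta_2=0$, and all of (iii).

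To close the gap, first use the dense set $V_0$ (or the complement of $V_0\cup V_1\cup V_2$) whenever an $\alpha$- or $\delta_i$-term is present, and avoid the resonant sets. For instance, in (ii) with $(\delta_1,\delta_2)\neq0$, work on $V_0\setminus(V_1\cup V_2\cup Z)$, where $Z=\{\delta_1A_1(c_1)+\delta_2A_2(c_2)=0\}$ and $A_i=\mathrm{Area}(D_i(c_i))/T_i(c_i)$. There
$a_1\mathscr{A}_{F_1}^c+a_2\mathscr{A}_{F_2}^c\sim(a_1-a_2)\alpha M^c(\tau_2-\tau_1;T)+(a_1\delta_1A_1+a_2\delta_2A_2)T$.
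The $t_0$-dependence of the growth rate of $M^c$ forces $a_1=a_2$, and then $a_1=0$. Your sketch overlooks that $Z$ must be removed when $\delta_1\delta_2<0$. Also, in (ii) the $\tau_3$-dependence kills $a_1\mu_1$, not $a_2$.

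For the cases that genuinely rest on resonant forcing, you need an input that Theorem~\ref{thm:main1} does not contain: analyticity of the first integral at the saddles $(q_i,p_i)=(\pi,0)$. Take analytic Morse coordinates with $F_i-2=x_iy_i$. An analytic $g$ that is constant on the hyperbola branches in an open sector satisfies $x_i\partial_{x_i}g-y_i\partial_{y_i}g\equiv0$, hence is an analytic function of $x_iy_i$. So $\psi$ in $G^0=\psi\circ F$, and with it the minors of $d\psi$, extends analytically across $c_i=2$, where the resonant values accumulate. The identity theorem then yields the contradiction.
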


\begin{rmk}\label{rmk:penduli_sharp}
    The case $\alpha\neq 0$ and $\delta_i=\mu_i=0 \ (i=1,2)$ satisfies
    the condition (i) in Proposition~\ref{prop:fi_pendla}.
    In this case, \eqref{eqn:pendulums} becomes a Hamiltonian system with Hamiltonian
    \[
    H_\varepsilon=F_1+F_2+\epsilon\frac{\alpha}{2}(q_1-q_2)^2,
    \]
    which implies that \eqref{eqn:pendulums} has an additional first integral other than $I$.
    Therefore, this case gives the sharpness of Theorem~\ref{thm:main1}.
\end{rmk}
We will next see that the parameter dependence of the upper bound of the numbers of commutative vector fields of \eqref{eqn:pendulums} is different from Proposition~\ref{prop:fi_pendla}. We have
\begin{align*}
d_\tau\mathscr{A}_{F_1}^c(\tau,T)\sim \left(-\alpha \frac{\partial M^c}{\partial t_0}(\tau_2-\tau_1;T){\bf{1}}_{V_0}(c)+\mu_1\frac{2\pi  {\rho_1}^{l+1}T\sin(\tau_{3}- \nu \tau_1)}{(1+ {\rho_1}^{2l+1})K(\sqrt{\frac{c_1}{2}})} {\bf{1}}_{V_1}(c)\right)d\tau_1\\
+\alpha \frac{\partial M^c}{\partial t_0}(\tau_2-\tau_1;T){\bf{1}}_{V_0}(c)d\tau_2-\mu_1\nu\frac{2\pi  {\rho_1}^{l+1}T\sin(\tau_{3}-\nu\tau_1)}{(1+ {\rho_1}^{2l+1})K(\sqrt{\frac{c_1}{2}})} {\bf{1}}_{V_1}(c)d\tau_3,\\
d_\tau\mathscr{A}_{F_2}^c(\tau,T)\sim \left(\alpha \frac{\partial M^c}{\partial t_0}(\tau_2-\tau_1;T){\bf{1}}_{V_0}(c)+\mu_2\frac{2\pi  {\rho_2}^{l+1}T\sin(\tau_{3}-\nu\tau_1)}{(1+ {\rho_2}^{2l+1})K(\sqrt{\frac{c_2}{2}})} {\bf{1}}_{V_2}(c)\right)d\tau_1\\
\ -\alpha \frac{\partial M^c}{\partial t_0}(\tau_2-\tau_1;T){\bf{1}}_{V_0}(c)d\tau_2-\mu_2\nu\frac{2\pi  {\rho_2}^{l+1}T\sin(\tau_{3}-\nu\tau_1)}{(1+ {\rho_2}^{2l+1})K(\sqrt{\frac{c_2}{2}})} {\bf{1}}_{V_2}(c)d\tau_3
\end{align*}
We apply Theorem~\ref{thm:main2} to Eq.~\eqref{eqn:pendulums}.
\begin{prop}\label{prop:cvf_pendla}
	For \eqref{eqn:pendulums}, we have the following:
	\begin{itemize}
		\item[(i)] If one of $\alpha, \mu_i (i=1,2)$ is not zero, then Eq.~ \eqref{eqn:pendulums} has at most $3$
		real-analytic commutative vector fields on $(q_1,p_1,q_2,p_2,\phi,\varepsilon)$ which are linearly independent near $\varepsilon=0$.
		\item[(ii)] If $\alpha\mu_1\neq 0$ or $\alpha\mu_2\neq 0$, then Eq.~ \eqref{eqn:pendulums} has at most 2 
		real-analytic commutative vector fields on $(q_1,p_1,q_2,p_2,\phi,\varepsilon)$  which are linearly independent near $\varepsilon=0$.
		\end{itemize}
\end{prop}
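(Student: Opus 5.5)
The plan is to apply Theorem~\ref{thm:main2} with $n=7$ and $q=4$. The extended phase space has coordinates $(q_1,p_1,q_2,p_2,\phi,\theta,I)$, the angles are $\tau=(\tau_1,\tau_2,\tau_3,\tau_4)$, and the first integrals are $F_1,F_2$ together with the trivial one $I$, whose Melnikov function vanishes identically. The first task is to check the hypotheses. (A1) and (A2) follow from the action-angle description of the two pendulums, the rotation $\dot\phi=\nu$, and the rotator $\dot\theta=I$. (A3) follows from Remark~\ref{rmk:(A3)}, because $\omega(c)=(\omega_1(c_1),\omega_2(c_2),\nu,c_3)$ with $\omega_i$ strictly monotone. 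For (A4) one needs $\mathrm{rank}\, d_c\omega=n-q=3$ somewhere, and this holds since $\omega_1'(c_1)$, $\omega_2'(c_2)$ and $\partial_{c_3}c_3=1$ are nonzero. Since $X_\varepsilon$ is always one of the commuting fields and $\partial_{\tau_1}$ is bounded (Remark~\ref{rmk:q_unbdd_direc}), the bound in the theorem is $q-s'=4-s'$. Part~(i) therefore requires $s'=1$, and part~(ii) requires $s'=2$, on a key set of $c$.

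Next I would read off, from the displayed asymptotics of $d_\tau\mathscr{A}_{F_i}^c$, the unbounded directions of the family $\{d_\tau\mathscr{A}_{F_i}^c\}_{i=1,2}$ modulo bounded functions. For~(i), I would split into cases.
\begin{itemize}
\item If $\mu_1\neq0$, take $c\in V_1$, which is a key set. The $d\tau_3$-component of $d_\tau\mathscr{A}_{F_1}^c$ is $\sim C\,T\sin(\tau_3-\nu\tau_1)$ with $C\neq0$, and it is unbounded for suitable $\tau$. This gives $s'\ge1$.
\item The case $\mu_2\neq0$ is symmetric, using $V_2$.
\item If $\alpha\neq0$, take $c\in V_0$. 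The $d\tau_2$-component is $\alpha\,\partial_{t_0}M^c(\tau_2-\tau_1;T)$. The Fourier computation shows that $\partial_{t_0}M^c(0;T)$ grows linearly in $T$, so this component is unbounded.
\end{itemize}
Theorem~\ref{thm:main2} then gives at most $3$ commuting fields. One point to handle is that the indicator terms could cancel on intersections such as $V_0\cap V_1$. To avoid this, I would pick the key set to be $V_1\setminus V_0$ (or $V_0\setminus(V_1\cup V_2)$). These are still key sets, because removing the countable union of analytic hypersurfaces $\{\omega_2/\omega_1=\text{const}\}$ from a dense set of hypersurfaces keeps it dense.

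For~(ii), say $\alpha\mu_1\ne0$, I would use the key set $V_0\cap V_1$. I would need to show that there is no nonzero $(a_2,a_3)$ with $a_2\,\partial_{\tau_2}\mathscr{A}_{F_i}+a_3\,\partial_{\tau_3}\mathscr{A}_{F_i}$ bounded for all $\tau$ and both $i$. Looking at $i=1$, the coefficient of $T$ in this combination is $a_2\alpha\,\kappa(\tau_2-\tau_1)-a_3\mu_1\nu C\sin(\tau_3-\nu\tau_1)$, where $\kappa(t_0)$ is the linear growth rate of $\partial_{t_0}M^c(t_0;T)$. The first term depends only on $\tau_2-\tau_1$, while the second varies in $\tau_3$ independently. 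Hence the coefficient vanishes identically only if $a_3=0$, and then $a_2=0$ because $\kappa\not\equiv0$. By Proposition~\ref{prop:equivalence_unbdd_family}, this gives $s'=2$, and Theorem~\ref{thm:main2} yields at most $2$ commuting fields. The case $\alpha\mu_2\neq0$ is the same, using $V_0\cap V_2$ and $i=2$.

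The main obstacle is making the $M^c$ term rigorous. This means showing that $\partial_{t_0}M^c(t_0;T)=\kappa(t_0)T+\text{bounded}$ with $\kappa$ a nonzero trigonometric function of $t_0$. I would get this from the product of the two Fourier series of $\mathrm{cn}$: only resonant mode pairs contribute secular terms, and the leading resonant pair has a nonzero coefficient since the nomes $\rho_i$ are positive. A second point is to confirm that $V_0\cap V_1$ is a key set. It is dense in $(0,2)^2\times(0,\infty)$, since $V_1$ fixes a dense set of $c_1$ and, for each such $c_1$, $V_0$ imposes a dense set of $c_2$.
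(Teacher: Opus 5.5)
\textbf{Your reduction matches the paper's, but it depends on a key-set claim that is false.} The reduction itself is the same as the paper's, which is a one-line application of Theorem~\ref{thm:main2} to the displayed asymptotics of $d_\tau\mathscr{A}^c_{F_i}$. Namely, $n=7$, $q=4$, $n-q=3$ with $\mathscr{A}_I\equiv 0$, (A3)--(A4) hold as you check them, and one needs $s'=1$ for (i) and $s'=2$ for (ii). Your independence computation on $V_0\cap V_1$ is fine. So is part (i) when $\alpha\neq0$, since $V_0$ is dense: ratios of odd integers are dense in $(0,\infty)$.

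The step that fails is the claim that $V_1$, $V_2$, $V_0\cap V_1$, $V_0\cap V_2$ (or $V_1\setminus V_0$) are key sets. Your justification that ``$V_1$ fixes a dense set of $c_1$'' is false. The quantity $\nu/\omega_1(c_1)=2\nu K(\sqrt{c_1/2})/\pi$ is strictly increasing in $c_1$, so each odd integer is attained by at most one $c_1^{(l)}$, and these values accumulate only at $c_1=2$. In fact
\[
V_1=\{c:\ \cos(\nu K(\sqrt{c_1/2}))=0\},\qquad V_2=\{c:\ \cos(c_3K(\sqrt{c_2/2}))=0\}.
\]
These are zero sets of real-analytic functions on $(0,2)^2\times(0,\infty)$ that are not identically zero. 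Hence none of the sets above is a key set for $C^\omega(U)$ on any open $U$. As written, your argument proves (i) only when $\alpha\neq0$; the case $\alpha=0$ of (i) and all of (ii) are not established.

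\textbf{This cannot be fixed by choosing a better set of $c$ within Theorem~\ref{thm:main2}.} Suppose $c\notin V_1\cup V_2$.
\begin{itemize}
\item The forcing terms then involve only frequencies $(2m+1)\omega_1\pm\nu$ and $(2j+1)\omega_2\pm c_3$, which are bounded away from $0$, so their $\tau$-derivatives are bounded.
\item The $\delta_i$-terms have bounded $\tau$-derivatives.
\item The coupling term satisfies exactly
\[
(\partial_{\tau_1}+\partial_{\tau_2})\int_0^T p_1(t+\tau_1)q_2(t+\tau_2)\,dt=\bigl[p_1(t+\tau_1)q_2(t+\tau_2)\bigr]_0^T ,
\]
and the same holds for the coupling term in $\mathscr{A}_{F_2}$.
\end{itemize}
So off $V_1\cup V_2$, every unbounded part of $d_\tau\mathscr{A}^c_{F_i}$ points along $d\tau_2-d\tau_1$. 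This gives $\mathscr{Q}'_2\subset V_1\cup V_2$ always, and $\mathscr{Q}'_1\subset V_1\cup V_2$ when $\alpha=0$. Thus the hypothesis of Theorem~\ref{thm:main2} is never met for (ii), nor for (i) with $\alpha=0$.

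The paper's proof asserts these key-set properties without justification, so you have inherited this gap rather than introduced it. Closing it needs an additional idea. One option is an argument using analyticity up to the separatrices $c_i=2$, where the resonant values accumulate, in the spirit of Theorem~\ref{thm:Melnikov2}. Neither your proposal nor the paper supplies this.
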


\begin{rmk}
    The above approach also works for other examples in which the unperturbed system has no homoclinic or heteroclinic orbits.
    For \eqref{eqn:pendulums}, using Theorems~\ref{thm:Melnikov1} and ~\ref{thm:Melnikov2}, we can obtain the same statements as in Propositions~\ref{prop:fi_pendla} and ~\ref{prop:cvf_pendla}, in which case we need to compute the homoclinic Melnikov vector.
    Note that the corresponding homoclinic Melnikov vector $M(\tau,\theta,c_3,\phi)=(M_1(\tau,\theta,c_3,\phi),M_2(\tau,\theta,c_3,\phi))$ is
    \begin{align*}
    M_1(\tau,\theta,c_3,\phi)=
    -\alpha\int_0^{\tau_2-\tau_1}\frac{8s}{\sinh s}ds-8\delta_1-\frac{2\pi\mu_1}{\cosh (\pi\nu/2)}\cos(\tau_3-\nu\tau_1),\\
    M_2(\tau,\theta,c_3,\phi)=\alpha\int_0^{\tau_2-\tau_1}\frac{8s}{\sinh s}ds-8\delta_2-\frac{2\pi\mu_2}{\cosh (\pi c_3/2)}\cos(\tau_4-c_3\tau_2).
    \end{align*}
\end{rmk}


\subsection{$N$-coupled pendulums}
Let $N\in\Nset$. Consider a Hamiltonian $H:\Rset^N\times\Rset^N\to \Rset$ with
\begin{align}\label{eqn:Ncoupled}
H_\varepsilon(q_1,\ldots,q_N,p_1,\ldots,p_N)=\sum_{i=1}^N \left\{\frac{p_i^2}{2}+(1-\cos q_i)\right\}+\varepsilon\sum_{j=1}^{N-1} \frac{\alpha_j}{2}(q_j-q_{j+1})^2,
\end{align}
where $\alpha_j\in\Rset\ (j=1,\ldots,N-1)$ are non-zero constants and $\varepsilon\ge 0$ is a perturbation parameter.
The corresponding Hamiltonian system is given by
\begin{equation}\label{eqn:coupled_Duffing}
	\begin{aligned}
		\dot{q}_1&=p_1,\quad
		\dot{p}_1=-\sin q_1-\varepsilon\alpha_1(q_1-q_2),\\
		\dot{q}_i&=p_i,\quad
		\dot{p}_i=-\sin q_i-\varepsilon\alpha_i(q_i-q_{i+1})-\varepsilon\alpha_{i-1}(q_i-q_{i-1}),\\
		\dot{q}_N&=p_N,\quad
		\dot{p}_N=-\sin q_1-\varepsilon\alpha_{N-1}(q_{N}-q_{N-1}),
	\end{aligned}
\end{equation}
with $i=2,\ldots, N-1$.
Note that $F_i=\frac{p_i^2}{2}+(1-\cos q_i)\ (i=1,\ldots,N)$ are pairwise Poisson commutative first integrals of $H_0$.

We take a family of quasi-periodic orbits $\gamma_\tau^c(t)=(q_1(t+\tau_1;c_1),\ldots,q_N(t+\tau_N;c_N),,p_1(t+\tau_1;c_1),\ldots, p_N(t+\tau_N;c_N))$ with 
\begin{align*}
	(q_i(t;c_i), p_i(t;c_i))=\left(2\arcsin\left(\sqrt{\frac{c_i}{2}}\sn\left(t;\sqrt{\frac{c_i}{2}}\right)\right), 2\sqrt{\frac{c_i}{2}}\cn\left(t;\sqrt{\frac{c_i}{2}}\right)\right),
	\end{align*}
	with $i=1,\ldots,N$ for $c=(c_1,\ldots, c_N)\in(0,2)^N$ and $\tau=(\tau_1,\ldots,\tau_N)\in\Tset^N$,
Let
\[
W_i=\left\{c=(c_1,\ldots,c_N)\in(0,2)^N ;\   {\frac{\omega(c_{i+1})}{\omega(c_i)}}=\frac{2m_i+1}{2j_{i+1}+1}\ \text{for some} \ m_i,j_{i+1}\in\Nset_0\right\},
\]
for $i=1,\ldots,{N-1}$, where $\omega(c_i)=\pi/(2K(\sqrt{c_i/2}))$.
Then we have
\begin{align*}
\mathscr{A}_{F_1}^c(\tau,T)\ \sim \ &\alpha_1 M^{(c_1,c_2)}(\tau_2-\tau_1;T){\bf{1}}_{W_1}(c),\\
\mathscr{A}_{F_i}^c(\tau,T)\ \sim \ &\alpha_i M^{(c_i,c_{i+1})}(\tau_{i+1}-\tau_i;T){\bf{1}}_{W_i}(c)+\alpha_i M^{(c_i,c_{i-1})}(\tau_{i-1}-\tau_i;T){\bf{1}}_{W_{i-1}}(c),\\
\mathscr{A}_{F_{N}}^c(\tau,T)\ \sim \ &\alpha_{N-1} M^{(c_{N-1},c_N)}(\tau_{N-1}-\tau_N;T){\bf{1}}_{W_{N-1}}(c),
\end{align*}
with $i=2,\ldots,N-1$. It is easy to verify that $\mathscr{A}_{F}^c(\tau,T)$ has $(N-1)$ unbounded directions if $c\in W:=\bigcap_{i=1}^{N-1} W_i$. Note that $W$ is a key set.
From Theorem~\ref{thm:main1}, we have the following.
\begin{prop}
	Let $\alpha_i\in\Rset\ (i=1,\ldots,N-1)$ be non-zero constant.
	Then the Hamiltonian $H_\varepsilon$ given by \eqref{eqn:Ncoupled} has no real-analytic first integral depending on $(q_1,\ldots,q_N,p_1,\ldots, p_N,\varepsilon)$ which is functionally independent of $H_\varepsilon$.
\end{prop}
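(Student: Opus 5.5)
The plan is to apply Theorem~\ref{thm:main1} with $n=2N$, $q=N$ (so $n-q=N$) and $s=N-1$. The unperturbed system is the decoupled sum of $N$ pendula with first integrals $F_1,\ldots,F_N$, and the commuting vector fields are $X_{F_1},\ldots,X_{F_N}$. Assumptions (A1), (A2) hold on the region $c\in(0,2)^N$ of oscillating librations. Assumption (A3) follows from Remark~\ref{rmk:(A3)}, because $\omega(c)=(\omega(c_1),\ldots,\omega(c_N))$ has Jacobian that is diagonal with entries $\omega'(c_i)\neq0$ (the period $4K(\sqrt{c_i/2})$ is strictly increasing). Theorem~\ref{thm:main1} then gives at most $N-(N-1)=1$ functionally independent analytic first integral. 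Since $H_\varepsilon$ is itself such an integral (its $\varepsilon=0$ part $\sum F_i$ is non-constant), no further integral functionally independent of $H_\varepsilon$ can exist. This is exactly the claim.

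The substantive work is to check that $\mathscr{Q}_{N-1}$ is a key set. I will show $W\subset\mathscr{Q}_{N-1}$ and that $W$ is a key set.

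\textbf{$W$ is a key set.} The map $c\mapsto(\omega(c_1),\ldots,\omega(c_N))$ is an analytic local diffeomorphism. The set of frequency vectors all of whose consecutive ratios are quotients of odd integers is dense: take all $\omega_i$ to be rationals with odd numerators and denominators. Hence $W$ is dense in $U$, and a dense set is a key set.

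\textbf{$W\subset\mathscr{Q}_{N-1}$.} By the displayed asymptotics, for $c\in W$ each $\mathscr{A}_{F_i}^c$ is, modulo bounded functions, a combination of the terms $M^{(c_i,c_{i\pm1})}(\tau_{i\pm1}-\tau_i;T)$. The coupling terms coming from $q_i$ in $-\alpha_i(q_i-q_{i+1})$ are exact derivatives, hence bounded, as computed in the two-pendulum case. Set $B_i(\tau,T):=\alpha_iM^{(c_i,c_{i+1})}(\tau_{i+1}-\tau_i;T)$ for $i=1,\ldots,N-1$. I will use the first $N-1$ components $F_1,\ldots,F_{N-1}$ and prove, by Definition~\ref{dfn:unbdd_A}, that $\sum_{i=1}^{N-1}a_i\hat{\mathscr{A}}_{F_i}^c=0$ for all $\tau$ forces $a=0$.

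The key property is that $\hat B_i$ depends on $\tau$ only through $\tau_{i+1}-\tau_i$, and that this dependence is genuine: $M^c(t_0;T)$ is unbounded in $T$ and neither constant in $t_0$ nor of the form $aT+$bounded. I also need the relation $\hat M^{(c_i,c_{i-1})}(-t_0)=-\hat M^{(c_{i-1},c_i)}(t_0)$, obtained by integrating by parts, since $p_iq_{i-1}+q_ip_{i-1}$ is a derivative. With it, the linear combination becomes $\sum_i(a_i-a_{i+1})\hat B_i$, with the convention $a_N=0$. Varying the differences $\tau_{i+1}-\tau_i$ independently, the functions $\hat B_i$ are linearly independent modulo bounded functions. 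Hence $a_i=a_{i+1}$ for all $i$, and $a_{N-1}=a_N=0$ then gives $a=0$. So there are at least $N-1$ unbounded directions. There are not $N$, because $\sum_i\mathscr{A}_{F_i}^c$ is bounded, being $\varepsilon$-derivative data of the conserved $H_\varepsilon$.

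The main obstacle is making rigorous the linear independence of the $\hat B_i$ as functions of the independent shifts. The tool I would use is a separation of variables: fix all but one difference, and use the fact that $t_0\mapsto\hat M(t_0;\cdot)$ is non-constant and not of the form $aT+$bounded, so it cannot be cancelled by the terms that do not depend on that difference.
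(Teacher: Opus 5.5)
Your proposal is correct and follows the paper's route. You apply Theorem~\ref{thm:main1} with $s=N-1$, using that the dense (hence key) set $W$ lies in $\mathscr{Q}_{N-1}$, so only one first integral can persist, namely $H_\varepsilon$. Your integration-by-parts identity telescopes $\sum_{i\le N-1} a_i\hat{\mathscr{A}}_{F_i}^c$ into $\sum_i (a_i-a_{i+1})\hat B_i$, and with the separation-of-variables argument in the independent phase differences this supplies the details the paper calls ``easy to verify''. It also corrects two index slips in the paper's displayed asymptotics: $\alpha_i\to\alpha_{i-1}$, and $M^{(c_{N-1},c_N)}\to M^{(c_N,c_{N-1})}$.
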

It is easy to verify that $\{d_\tau\mathscr{A}_{F_i}^c\}_{i\in\{1,\ldots,N\}}$ has $(N-1)$ unbounded directions if $c\in W:=\bigcap_{i=1}^{N-1} W_i$. Using Theorem~\ref{thm:main2}, we also obtain an obstruction to existence of additional commutative vector fields.
\begin{prop}
	Let $\alpha_i\in\Rset\ (i=1,\ldots,N-1)$ be non-zero constant.
	Then the Hamiltonian $H_\varepsilon$ given by \eqref{eqn:Ncoupled} has no
		real-analytic commutative vector fields in $(q_1,\ldots,q_N,p_1,\ldots, p_N,\varepsilon)$ which is linearly independent of the Hamiltonian vector field of $H_\epsilon$ near $\varepsilon=0$.
		\end{prop}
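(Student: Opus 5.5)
The statement is a direct application of Theorem~\ref{thm:main2}. First I check its hypotheses for the unperturbed $N$-pendulum system. We have $n=2N$ and $q=N$, with commuting fields $X_{F_1},\dots,X_{F_N}$ and first integrals $F_1,\dots,F_N$, so (A1') and hence (A1) hold. (A2) holds on $(0,2)^N$, where the level sets are the product tori of the librational orbits. (A3) and (A4) hold because $\omega(I)=(\omega(c_1),\dots,\omega(c_N))$ with each $\omega(c_i)=\pi/(2K(\sqrt{c_i/2}))$ strictly decreasing in $c_i$. Thus $d_I\omega$ is diagonal with nonzero entries and has rank $N=n-q$, and Remark~\ref{rmk:(A3)} gives (A3). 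It then suffices to show that $\mathscr{Q}'_{N-1}$ is a key set. In that case Theorem~\ref{thm:main2} with $s'=N-1$ yields at most $q-s'=1$ commuting vector field that persists analytically. By the convention of Remark~\ref{rmk:Z_1} that field is $X_{H_\varepsilon}$ itself, so no linearly independent one can exist.

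To show $W\subset\mathscr{Q}'_{N-1}$, I would differentiate the asymptotic formulas for $\mathscr{A}_{F_i}^c$ in $\tau$. For $c\in W$, component $i$ of the family $\{d_\tau\mathscr{A}_{F_i}^c\}$ is, modulo bounded terms, a combination of $\pm\alpha_i\,\partial_{t_0}M^{(c_i,c_{i\pm1})}(\tau_{i\pm1}-\tau_i;T)$ placed in the slots $d\tau_i$ and $d\tau_{i\pm1}$. By the Fourier computation preceding Proposition~\ref{prop:fi_pendla}, on the resonant set each $\partial_{t_0}M^{(c_i,c_{i+1})}(t_0;T)$ equals $\beta_i(t_0)T$ plus a bounded term. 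Here $\beta_i$ is a nonconstant trigonometric function of $t_0$ with $\beta_i(0)\neq0$. Up to bounded terms, the family is therefore $T$ times a Laplacian-type matrix in $\tau$ whose edge weights $\alpha_i\beta_i(\tau_{i+1}-\tau_i)$ come from a path graph on $N$ vertices.

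A bounded linear combination $\sum_k a_k\partial_{\tau_k}\mathscr{A}^c_{F_i}$ that is bounded for every $i$ and every $\tau$ gives the equations $\alpha_i\beta_i(\tau_{i+1}-\tau_i)(a_{i+1}-a_i)=0$. Choosing $\tau$ with $\beta_i\neq0$ forces $a_1=\dots=a_N$. The only bounded direction is therefore $(1,\dots,1)$, which corresponds to the flow direction as in Remark~\ref{rmk:q_unbdd_direc}. So the complementary $N-1$ components, for instance $\partial_{\tau_2},\dots,\partial_{\tau_N}$, satisfy Definition~\ref{dfn:unbdd_A_family}, and $W\subset\mathscr{Q}'_{N-1}$.

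The remaining step, and the one I expect to be the main obstacle, is to justify that $W$ is a key set and that the asymptotics are uniform enough. For the key-set property, the map $c\mapsto(\omega(c_1),\dots,\omega(c_N))$ is a local analytic diffeomorphism. The set of frequency vectors whose consecutive ratios are ratios of odd integers is dense in the image, so $W$ is dense in $(0,2)^N$ and hence a key set. One must also make sure that the constants $\beta_i$ computed at a single resonance do not vanish identically. This comes from the leading Fourier coefficients $\rho_1^{m+1}\rho_2^{j+1}/((1+\rho_1^{2m+1})(1+\rho_2^{2j+1}))>0$, which are evaluated at the resonant harmonics after a phase shift.
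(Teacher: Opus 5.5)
Your proposal is correct and follows the paper's own route. The paper shows that for $c\in W$ the family $\{d_\tau\mathscr{A}_{F_i}^c\}$ has $N-1$ unbounded directions, with the flow direction $(1,\ldots,1)$ the only bounded one, notes that $W$ is a key set, and applies Theorem~\ref{thm:main2} with $s'=N-1$. You reproduce this, supplying the Fourier and density details that the paper summarizes as ``easy to verify.'' One small fix: for interior $i$ each component carries two edge terms, so the equalities $a_{i+1}=a_i$ must be peeled off inductively starting from $i=1$.
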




\setcounter{equation}{0}
\renewcommand{\theequation}{\Alph{section}.\arabic{equation}}

\appendix

\section{An integrable Hamiltonian system with unbounded integrals}\label{app:example}

In this appendix, we provide an example of integrable Hamiltonian system with unbounded integrals.
Consider a Hamiltonian $H_\varepsilon:\Rset^n\times \Tset^n\to\Rset$ given by
\begin{align}\label{eq:Appendix}
	H_\varepsilon(I,\theta)=h(I)+\epsilon \cos(k\cdot \theta),
\end{align}
where $h:\Rset^n\to\Rset$ is an analytic function,  $k\in\Zset^n\setminus\{0\}$ and $\varepsilon\ge 0$.
It is trivial that $H_\varepsilon(I,\theta)$ is Liouville-integrable when $\epsilon=0$.
Let
\[
W:=(\mathrm{span}_{\Rset}\{k\})^\perp =\{w\in\Rset^n; k\cdot w=0\}.
\]
Take a basis $\{w_1,\ldots,w_{n-1}\}$ of $W$. Let $F_i(I,\theta)\ (i=1,\ldots,n-1)$ be a function on $\Rset^n\times \Tset^n$ defined as
\[
	F_i(I,\theta)=w_i\cdot I.
\]
Then it is easy to see that $H_\varepsilon$ is Liouville-integrable for all $\epsilon>0$ with pair-wise Poisson commutative and functionally independent first integrals $H_\epsilon, F_1,F_2,\ldots, F_{n-1}$.
Set
\[
	R_k:=\left\{I\in\Rset^n; k\cdot \frac{\partial h}{\partial I}(I)=0\right\}.
\]
We assume that
\begin{itemize}
\item[(B)] $R_k$ is a non-empty proper subset of $\Rset^n$.
\end{itemize}
Since $h$ is analytic, the function $k\cdot \frac{\partial h}{\partial I}(I)$ is also analytic, and thus $R$ is an analytic variety which is possibly uncountable but not the whole space $\Rset^n$.
For $I^*\in R_k$, we compute our integral
\begin{align*}
	\int_0^T &dI(X_{H^1})_{(I^*, \frac{\partial h}{\partial I}(I^*)t+\tau)}dt=\left(\int_0^T \sin\left(k\cdot \frac{\partial h}{\partial I}(I^*) t +k\cdot \tau\right) dt \right)k\\
	&=\left\{\cos(k\cdot\tau) \int_0^T \sin\left(k\cdot \frac{\partial h}{\partial I}(I^*) t\right) dt +\sin(k\cdot\tau) \int_0^T \cos\left(k\cdot \frac{\partial h}{\partial I}(I^*) t\right) dt \right\}k\\
	&=T\sin(k\cdot\tau) k
\end{align*}
therefore, we have $R_k\subset \mathscr{Q}_1$. Note that $R_k$ is not a key set since if not $R_k=\Rset^n$ by the definition of a key set but this cannot occur.

\section{Conformally symplectic and partially conformally  symplectic systems}
\label{sec:partially_conformal}

We recall conformally symplectic systems \cite{Vaisman85} and partially conformally symplectic systems \cite{CallejaCL13}. 

Let $(\mathscr{M},\Omega)$ be a $2m$-dimensional, symplectic manifold.
Assume that $\Omega$ is exact, i.e., $\Omega=d\alpha$  for some $1$-form $\alpha$ (the Liouville form).
In Darboux local coordinates, we have $\Omega=dp\wedge dq$ and $\alpha=pdq$, with $(p,q)\in\mathbb{R}^m\times\mathbb{R}^m$.

A vector field $X$ on $M$ is conformally symplectic provided
\[\mathcal{L}_X\Omega= -\lambda\Omega\] for some $\lambda\neq 0$, where $\mathcal{L}_X$ denotes the Lie derivative in the direction of $X$.
The case when $\lambda>0$  corresponds to a  dissipative vector field,   $\lambda=0$  to a symplectic vector field,
and $\lambda<0$  to an anti-dissipative (energy-injecting) vector field.

If $X$ is conformally symplectic, also $-X$ is conformally symplectic and
$\mathcal{L}_{-X}\Omega  =  \lambda\Omega$. Hence, up to a time-inversion of the flow, one can always choose the
sign of $\lambda$. Hereafter, we assume the dissipative case $\lambda>0$.

Using Cartan's formula an the closedness of $\Omega$, we have
\[\mathcal{L}_X\Omega= d(\iota_X \Omega) +\iota_X(d\Omega) =d(\iota_X \Omega),\]
where  $\iota_X$ denotes the interior product with $X$. Using the exactness of $\Omega$, it follows that
\[d(\iota_X\Omega+\lambda\alpha)=0,\]
so $\iota_X\Omega+\lambda \alpha$ is a closed $1$-form.
The vector field $X$ is exact conformally symplectic if this closed $1$-form is exact, i.e.,
\[\iota_X\Omega+\lambda \alpha=-dH,\]
for some Hamiltonian function $H:M\to\mathbb{R}$.
In the local coordinates, the exactness condition becomes
\[ \dot p dq-\dot q dp +\lambda pdq=-\frac{\partial H}{\partial p}dp-\frac{\partial H}{\partial q} dq,\]
hence
\[\left\{
    \begin{array}{ll}
      \dot p =&-\frac{\partial H}{\partial q}-\lambda p   \\[1em]
      \dot q  =& \frac{\partial H}{\partial p} .
    \end{array}
  \right.
\]
If we denote $z=(p,q)$, the equations of motion can be written as
\[ \dot z= X_H (z)+ Y(z) \]
where $X_H=JDH$ is the Hamiltonian vector field and
\[Y(z)=-\lambda  p \cdot \frac{\partial}{\partial p}.\]

Now, let us assume that  $(\mathscr{M}_i,\Omega_i)$ are $(2m_i)$-dimensional, symplectic manifolds, for $i=1,\ldots,k$.
Consider the product
\[(\mathscr{M},\Omega):=(\mathscr{M}_1\times\ldots \times \mathscr{M}_k,\Omega_1\oplus \ldots \oplus\Omega_k),\]
where each $\Omega_i$ is exact with $\Omega_i=d\alpha_i$.
A vector field $X$ on $\mathscr{M}$ is quasi-conformally symplectic provided that
\[\mathcal{L}_X\Omega=(-\lambda_1\Omega_1)\oplus\ldots \oplus(-\lambda_k\Omega_k)\] for some $\lambda_1,\ldots,\lambda_k\ge 0$, where we assume that at least one $\lambda_i>0$.
Similarly to above, this condition is equivalent to that
$\iota_X\Omega+( \lambda_1\alpha_1)\oplus\ldots \oplus( \lambda_k\alpha_k)$ is a closed form.
The vector field $X$ is exact partially conformally symplectic if this closed $1$-form is exact, i.e.,
\[\iota_X\Omega+( \lambda_1\alpha_1)\oplus\ldots \oplus( \lambda_k\alpha_k)=-dH,\]
for some $H:\mathscr{M}\to \mathbb{R}$.
In local coordinates, this condition becomes
\[ \bigoplus_{i=1}^{k} \left(\dot p_i dq_i-\dot q_i dp_i +\lambda_i p_idq_i\right)= \bigoplus_{i=1}^{k} \left(-\frac{\partial H}{\partial p_i}dp_i-\frac{\partial H}{\partial q_i} dq_i \right),\] hence
\[\left\{
    \begin{array}{ll}
      \dot p_i =&-\frac{\partial H}{\partial q_i}-\lambda_i p_i \\[1em]
      \dot q_i  =& \frac{\partial H}{\partial p_i} .
    \end{array}
  \right., \quad \textrm{ for } i=1,\ldots, k.
\]
If we denote $z=(p_1,
\ldots,p_k,q_1, \ldots, ,q_k)$, the equations of motion can be written as
\[ \dot z= X_H (z)+ Y(z) \]
where $X_H$ is the Hamiltonian vector field and
\[Y(z)=\sum_{i=1}^{k}-\lambda_ip_i \frac{\partial}{\partial p_i}.\]

Suppose that $H:\mathscr{M}\times \mathbb{R}\to\mathbb{R}$ is a time-dependent Hamiltonian on $M$.
The standard recipe to transform it into an autonomous Hamiltonian is by making the time into an independent variable and introducing a new variable
conjugate with time:
\[\hat H(p,q,A,t)=H(p,q,t)+A.\]
The conformally symplectic case amounts to adding to the Hamiltonian vector field a dissipative vector field of the form
\[Y(p,q,A,t)=-\lambda p \cdot\frac{\partial}{\partial p}-\lambda A\frac{\partial}{\partial A}.\]
The corresponding equations of motions are
\[\left\{
    \begin{array}{ll}
      \dot p =&-\frac{\partial H}{\partial q}-\lambda p   \\[1em]
      \dot q  =& \frac{\partial H}{\partial p}  \\[1em]
      \dot A=& -\frac{\partial H}{\partial t}-\lambda A\\[1em]
      \dot t  =& 1.
    \end{array}
  \right.
\]
The equation for $A$ is independent of the other equations hence plays no role in the dynamics. 
For this reason, we can alternatively add to the Hamiltonian vector field a dissipative vector field of the form
\[Y(p,q,A,t)=-\lambda_1 p \cdot\frac{\partial}{\partial p}-\lambda_2 A\frac{\partial}{\partial A},\]
obtaining a partially conformally symplectic system.
%

\vspace*{33pt}

\noindent
\textbf{Acknowledgement.}~ 
Research of M.G. was partially supported by NSF grants DMS-2154725 and DMS-2307718.
Research of S.M. was partially supported by JSPS KAKENHI Grant Number 25K17288.

\vspace{11pt }
\noindent
\textbf{Data Availability.}~
Data sharing not applicable to this article as no datasets were generated or analyzed during the current study.



\begin{thebibliography}{99}





\bibitem{A89}
V.I.~Arnold,
\textit{Mathematical Methods of Classical Mechanics}, 2nd ed.,
Springer, New York, 1989.




\bibitem{AZ10}
M.~Ayoul and N.T.~Zung,
Galoisian obstructions to non-Hamiltonian integrability,
\textit{C. R. Math. Acad. Sci. Paris}, 
\textbf{348} (2010), 
1323--1326.


\bibitem{B98}
O.I.~Bogoyavlenskij,
Extended integrability and bi-hamiltonian systems,
\textit{Comm. Math. Phys.}, \textbf{196} (1998), 
19--51.


\bibitem{bounemoura2016note}
A.~Bounemoura and V.~Kaloshin,
A note on micro-instability for Hamiltonian systems close to integrable,
\textit{Proc. Amer. Math. Soc.}, \textbf{144} (2016), 
1553--1560.




\bibitem{CallejaCL13}
R.C.~Calleja, A.~Celletti and R.~de la Llave,
A KAM theory for conformally symplectic systems: Efficient algorithms and their validation,
\textit{J. Differential Equations}, \textbf{255} (2013), 
978--1049.


\bibitem{delshams2000splitting}
M. Delshams and P. Guti\'errez,
Splitting potential and the Poincar\'e--Melnikov method for whiskered tori in Hamiltonian systems,
\textit{J. Nonlinear Sci.}, \textbf{10} (2000), 
433--476.


\bibitem{delshams2008geometric}
A.~Delshams, R.~de la Llave and T.M.~Seara,
Geometric properties of the scattering map of a normally hyperbolic invariant manifold,
\textit{Adv. Math.}, \textbf{217} (2008), 
1096--1153.


\bibitem{GL18}
M.~Gidea, R.~de la Llave,
Global Melnikov theory in Hamiltonian systems with general time-dependent perturbations,
\textit{J. Nonlinear Sci.}, \textbf{28} (2018), 1657--1707.


\bibitem{gidea2020general}
M.~Gidea, R.~de la Llave and M.T.~Martinez-Seara,
A general mechanism of instability in Hamiltonian systems: Skipping along a normally hyperbolic invariant manifold,
\textit{Discrete Contin. Dyn. Syst. Ser. A}, \textbf{40} (2020), 
6795--6813.


\bibitem{GH83}
J.~Guckenheimer and P.~Holmes,
\textit{Nonlinear Oscillations, Dynamical Systems, and Bifurcations of Vector Fields},
Springer, New York, 1983.


\bibitem{HPS77}
M.W.~Hirsch, C.C.~Pugh and M.~Shub,
\textit{Invariant Manifolds},
Springer, Berlin, 1977.




\bibitem{K83}
V.V.~Kozlov,
Integrability and non-integarbility in Hamiltonian mechanics,
\textit{Russian Math. Surveys},
\textbf{38} (1983), 1--76.


\bibitem{K96}
V.V.~Kozlov,
\textit{Symmetries, Topology and Resonances in Hamiltonian Mechanics},
Springer, Berlin, 1996.


\bibitem{M63}
V.K.~Melnikov,
On the stability of the center for time periodic perturbations,
\textit{Trans. Moscow Math. Soc.}, \textbf{12} (1963), 
1--56.


\bibitem{MR01a}
J.J.~Morales-Ruiz and J.P.~Ramis,
Galoisian obstructions to integrability of Hamiltonian systems. I,
\textit{Methods and Applications of Analysis}, \textbf{8} (2001), no.~1,
33--96.


\bibitem{MR01b}
J.J.~Morales-Ruiz and J.P.~Ramis,
Galoisian obstructions to integrability of Hamiltonian systems. II,
\textit{Methods and Applications of Analysis}, \textbf{8} (2001), no.~1,
97--112.


\bibitem{MRS07}
J.J.~Morales-Ruiz, J.P.~Ramis and C.~Sim{\'o},
Integrability of Hamiltonian systems and differential Galois groups of higher variational equations,
\textit{Annales Scientifiques de l'{\'E}cole Normale Sup{\'e}rieure}, \textbf{40} (2007), no.~6,
845--884.


\bibitem{M24}
S.~Motonaga,
Subharmonic Melnikov functions and nonintegrability for autonomous and non-autonomous perturbations of single-degree-of-freedom Hamiltonian systems near periodic orbits,
\textit{Phys. D}, \textbf{460} (2024), 134088.


\bibitem{MY21}
S.~Motonaga and K.~Yagasaki,
Persistence of periodic and homoclinic orbits, first integrals and commutative vector fields
in dynamical systems,
\textit{Nonlinearity}, \textbf{34} (2021), 7574--7608.


\bibitem{MY23}
S.~Motonaga and K.~Yagasaki,
Obstructions to integrability of nearly integrable dynamical systems near regular level sets,
\textit{Arch. Ration. Mech. Anal.}, \textbf{247} (2023), 44.


\bibitem{M73}
J.~Moser,
\textit{Stable and Random Motions in Dynamical Systems:
With Special Emphasis on Celestial Mechanics},
Princeton Univ. Press, Princeton, NJ,
1973.


\bibitem{P90}
H.~Poincar\'e,
Sur le probl\'eme des trois corps et les \'equations de la dynamique,
\textit{Acta Math.}, \textbf{13} (1890), 1--270;
English translation:
\textit{The Three-Body Problem and the Equations of Dynamics},
Translated by D.~Popp, Springer, Cham, Switzerland, 2017.


\bibitem{P92}
H.~Poincar\'e,
\textit{New Methods of Celestial Mechanics}, Vol.~1,
AIP Press, New York, 1992 (original 1892).


\bibitem{Vaisman85}
I.~Vaisman,
Locally conformal symplectic manifolds,
\textit{Internat. J. Math. Math. Sci.}, \textbf{8} (1985), 
521--536.


\bibitem{VH85}
P.~Veerman and P.~Holmes,
The existence of arbitrarily many distinct periodic orbits in a two degree of freedom Hamiltonian system.
\textit{Phys. D}, \textbf{14} (1985), 177--192.


\bibitem{W90}
S.~Wiggins,
\textit{Introduction to Applied Nonlinear Dynamical Systems and Chaos}, 
Springer, New York, 
1990.


\bibitem{Y96}
K.~Yagasaki,
The Melnikov theory for subharmonics and their bifurcations in forced oscillations,
\textit{SIAM J. Appl. Math.}, \textbf{56} (1996), 
1720--1765.


\bibitem{Ziglin82}
S.L.~Ziglin,
Branching of solutions and nonexistence of first integrals in Hamiltonian mechanics. I,
\textit{Functional Analysis and Its Applications}, \textbf{16} (1982), no.~3, 181--189.


\bibitem{Ziglin83}
S.L.~Ziglin,
Branching of solutions and nonexistence of first integrals in Hamiltonian mechanics. II,
\textit{Functional Analysis and Its Applications}, \textbf{17} (1983), no.~1, 6--17.


\bibitem{Z06}
N.T.~Zung,
Torus actions and integrable systems,
in \textit{Topological Methods in the Theory of Integrable Systems},
A.V.~Bolsinov, A.T.~Fomenko and A.A.~Oshemkov (eds.),
Cambridge Scientific Publications, Cambridge, 2006, pp.~289--328.


\bibitem{Z18}
N.T.~Zung,
A conceptual approach to the problem of action-angle variables,
\textit{Arch. Ration. Mech. Anal.}, \textbf{229} (2018), 
789--833.

\end{thebibliography}
\end{document}